\newcommand\RR{\ensuremath{\mathbb{R}}}
\newcommand\ZZ{\ensuremath{\mathbb{Z}}}
\newcommand\NN{\ensuremath{\mathbb{N}}}
\newcommand\eps{\ensuremath{\varepsilon}}
\newcommand\Tor{\mathsf{T}}
\newcommand\Hex{\mathsf{H}}
\newcommand\Str{\mathsf{S}}
\newcommand\Rec{\mathsf{R}}
\newcommand\bH{b^{\Hex}}
\newcommand\bS{b^{\Str}}
\newcommand\bC{b^{\mathsf c}}
\newtheorem{thm}{Theorem}[section]
\newtheorem{prop}[thm]{Proposition}
\newtheorem{lem}[thm]{Lemma}
\newtheorem{defin}[thm]{Definition}
\newtheorem{rem}[thm]{Remark}
\newtheorem{conj}[thm]{Conjecture}
\newenvironment{proofof}[1]{\begin{trivlist}
      \item[]\hspace{0cm}{\it Proof of #1.}
      \hspace{0cm}}{\hfill $\square$
      \end{trivlist}}
\definecolor{gr}{rgb}   {0.,   0.69,   0.23 }
\definecolor{cy}{rgb}   {0.,   0.57,   0.67 }
\definecolor{mg}{rgb}   {0.85,  0.,    0.85}
\definecolor{marron}{rgb}  {0.6, 0.40, 0.1} 
\definecolor{or}{rgb}   {0.8,  0.4,   0.}
\title{Spectral minimal partitions for a family of tori}
\author{Virginie Bonnaillie-No{\"e}l\footnote{Virginie Bonnaillie-No\"el, D\'epartement de Math\'ematiques et Applications (DMA UMR 8553), PSL Research University, CNRS, ENS Paris, 45 rue d'Ulm, F-75230 Paris Cedex 05, France, \texttt{virginie.bonnaillie@ens.fr}} 
\ and Corentin L\'ena\footnote{ Corentin L{\'e}na, Dipartimento di Matematica \emph{Giuseppe Peano}, Universit\`a Degli studi di Torino, Via Carlo Alberto, 10, 10123 Torino (TO), Italia, 
\texttt{clena@unito.it}}}
\begin{document}

\maketitle

\begin{abstract}
We study partitions of the two-dimensional flat torus $\left(\RR/\ZZ\right)\times\left(\RR/b\ZZ\right)\,$ into $k$ domains, with $b$ a real parameter in $(0,1]$ and $k$ an integer. We look for partitions which minimize the energy, defined as the largest  first eigenvalue of the Dirichlet Laplacian on the domains of the partition. We are in particular interested in the way these minimal partitions change when $b$ is varied. 
We present here an improvement, when $k$ is odd, of the results on transition values of $b$ established by B.~Helffer and T.~Hoffmann-Ostenhof (2014)  in \cite{HelHof14} and state a conjecture on those transition values. We establish an improved upper bound of the minimal energy by explicitly constructing hexagonal tilings of the torus. 
These tilings are close to the partitions obtained from a systematic numerical study based on an optimization algorithm adapted from B.~Bourdin, D.~Bucur, and {\'E}.~Oudet (2009) in \cite{BouBucOud09}. These numerical results also support our conjecture concerning the transition values and give better estimates near those transition values.

\paragraph{MSC classification.} Primary 49Q10; Secondary 35J05, 65K10, 65N06, 65N25. 

\paragraph{Keywords.} Minimal partitions, shape optimization, Dirichlet-Laplacian eigenvalues,  finite difference method, projected gradient algorithm.
\end{abstract}

\section{Introduction}
\subsection{Motivation}
Optimal partition problems are a field of shape optimization which has recently generated much interest, see e.g. \cite{BucButHen98,BucBut05,CafLin07,HelHofTer09}. They are connected to nodal sets of Laplacian eigenfunctions (see e.g. \cite{HelHofTer09}) and to steady states for competition-diffusion systems of partial differential equations (see e.g. \cite{ConTerVer05c}). This paper focus on a particular type of problem, studied by B.~Helffer, T.~ Hoffmann-Ostenhof, and S.~Terracini in \cite{HelHofTer09}.

Let us describe the general setting. In the following, $M$ is a compact, two-dimensional, Riemannian manifold without boundary, and $k$ an integer, $k\ge 2\,$. 
All along this paper, we consider $k$-partitions of $M$ in the following sense. 
\begin{defin}
\label{definPartition}
We call \emph{$k$-partition} (or simply \emph{partition}) a finite family $\mathcal{D}=(D_i)_{1\le i\le k}$ of open, connected, and mutually disjoint subsets of $M$ (called \emph{domains} of the partition). 
This partition is \emph{strong} if $M=\bigcup_{i=1}^k \overline{D_i}\,$. In that case, we can define the \emph{boundary} of $\mathcal{D}$ as $N(\mathcal{D})=\bigcup_{i=1}^k \partial D_i\,$. We then say that $\mathcal{D}$ is \emph{regular} if $N(\mathcal{D})$ is locally a regular curve, except at a finite number of singular points, where a finite number of half-curves meet with equal angles (the 'equal angle meeting property'). \\
We denote by $\mathfrak{P}_k$ the set of all $k$-partitions. 
\end{defin}

For any $k$-partition $\mathcal{D}\in \mathfrak{P}_k\,$, we define its \emph{energy} by
\begin{equation}\label{eq.nrj}
\Lambda_{k}(\mathcal{D})=\max_{1\le i \le k} \lambda_1(D_i)\,,
\end{equation}
where, for any open set $\omega \subset M\,$, we denote by $(\lambda_j(\omega))_{j \ge 1}$ the eigenvalues of the Laplacian on $\omega\,$, with Dirichlet boundary conditions, arranged in non-decreasing order and counted with multiplicities. 
The optimization problem we consider here is to study
\begin{equation}\label{eq.optim} 
\mathfrak{L}_k(M)=\inf_{\mathcal{D} \in \mathfrak{P}_k}\Lambda_k(\mathcal{D})\,.
\end{equation}
We say that a partition $\mathcal{D}^*\in \mathfrak{P}_k$ is \emph{minimal} when $\Lambda_k(\mathcal{D}^*)=\mathfrak{L}_k(M)\,$. The existence and regularity of minimal partitions have been established in \cite{BucButHen98,ConTerVer05b,CafLin07,HelHofTer09}, and we have 
\begin{thm}
\label{thmMinPartReg}
Let $\mathcal{D}=(D_i)_{1\le i\le k}$ be a  minimal $k$-partition. Up to zero capacity sets, $\mathcal{D}$ is strong and regular. 
Furthermore, $\mathcal{D}$ is \emph{equispectral}, that is to say $\Lambda_k(\mathcal{D})=\lambda_1(D_j)$ for any $1\leq j\leq k\,$.
\end{thm}

In the subsequent paper \cite{HelHof14}, B.~Helffer and T.~Hoffmann-Ostenhof considered the case where $M$ is the two-dimensional flat torus $\Tor(1,b)=\left(\mathbb{R}/\mathbb{Z}\right)\times\left(\mathbb{R}/b\mathbb{Z}\right)\,$, with $b$ a parameter in $(0,1]\,$. We also consider this situation in the present paper. In particular, we are concerned with finding numerically the minimal partitions of $\Tor(1,b)\,$, for a range of values of $b$\,. Minimal partition problems on surfaces have already been investigated numerically by several authors. The paper \cite{BouBucOud09} treats the case of the torus $T(1,1)$\,, and was the inspiration for our numerical study, while \cite{ElliottRanner2014Surfaces} considers several surfaces embedded in $\RR^3$\,, including the sphere. A more complete study of the sphere, using a different method, is presented in \cite{Bogosel2015MFS}. In all these papers, the energy to be minimized is the sum of the eigenvalues rather than the maximum.

\subsection{Nodal partitions}

We will exploit the connection, shown in \cite{HelHofTer09}, between minimal partitions and nodal domains of eigenfunctions of the Laplacian. Let us recall some relevant definitions and results. With an eigenfunction $u$ of the Laplacian on $M\,$, we associate the \emph{nodal domains} which are the connected components, denoted by $D_{i}\,$, of $M \setminus N(u)$ with $N(u)=\{x \in M\,;\, u(x)=0\}\,$. The number $\mu(u)$ of connected components is finite.
According to classical results on the regularity of the nodal set, the family $(D_i)_{1 \le i \le \mu(u)}$ of the nodal domains is a strong regular $\mu(u)$-partition in the sense of Definition \ref{definPartition}.  We call it the \emph{nodal partition} of $u\,$. Let us note that, for a nodal partition, an even number of half-curves meet at each singular point in the boundary. This is in contrast with the situation for minimal partitions, where there is no constraint on this number. \\
A famous result, proved by R.~Courant \cite{CouHil53}, states that if $u$ is an eigenfunction associated with $\lambda_j(M)\,$,  $\mu(u) \le j\,$. 
Following \cite{HelHofTer09}, we introduce a new definition for the case of equality.
\begin{defin}
An eigenfunction $u$ of the Laplacian, associated with the eigenvalue $\lambda\,$, is said to be \emph{Courant-sharp} if $\mu(u)=\min\{\ell\,;\,\lambda_{\ell}(M)=\lambda\}\,$.
\end{defin}
\medskip
We can now state a result of \cite{HelHofTer09} that links minimal and nodal partitions.

\begin{thm}
 \label{thmNodalMinimal}
 We have $\lambda_k(M) \le\mathfrak{L}_k(M)\,$, and therefore the nodal partition of a Courant-sharp eigenfunction is minimal. Conversely, if the nodal partition of some eigenfunction is minimal, this eigenfunction is Courant-sharp. Finally, if $\mathfrak{L}_k(M)=\lambda_k(M)\,$, all minimal $k$-partitions are nodal.
\end{thm}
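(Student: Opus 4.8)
The plan is to prove Theorem \ref{thmNodalMinimal} in three parts, corresponding to the three assertions in the statement, relying throughout on the existence, regularity, and equispectrality facts recorded in Theorem \ref{thmMinPartReg}.

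First I would establish the fundamental inequality $\lambda_k(M) \le \mathfrak{L}_k(M)$. The key observation is a variational one: given any strong regular $k$-partition $\mathcal{D} = (D_i)_{1 \le i \le k}$, I would use the first Dirichlet eigenfunction $\varphi_i$ on each domain $D_i$, extended by zero to all of $M$, as $k$ test functions. These functions have mutually disjoint supports, hence are linearly independent, and they span a $k$-dimensional subspace $V$ of the form domain $H^1(M)$. For any $u \in V$, writing $u = \sum_i c_i \varphi_i$, the Rayleigh quotient satisfies $\frac{\int_M |\nabla u|^2}{\int_M u^2} \le \max_i \lambda_1(D_i) = \Lambda_k(\mathcal{D})$, because the cross terms vanish by disjointness of supports. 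The min-max characterization of $\lambda_k(M)$ then gives $\lambda_k(M) \le \Lambda_k(\mathcal{D})$. Taking the infimum over all partitions (which by Theorem \ref{thmMinPartReg} may be restricted to strong regular ones up to zero capacity sets) yields $\lambda_k(M) \le \mathfrak{L}_k(M)$. The main technical care here is justifying that the zero-extensions lie in $H^1(M)$, which follows from the regularity of the partition boundary.

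Next I would handle the two directions concerning Courant-sharp eigenfunctions. If $u$ is a Courant-sharp eigenfunction associated with $\lambda = \lambda_j(M)$ where $j = \mu(u)$, its nodal partition $\mathcal{D}_u$ is a strong regular $\mu(u)$-partition whose every domain $D_i$ has $\lambda_1(D_i) = \lambda$ (since $u$ restricted to $D_i$ is a positive first Dirichlet eigenfunction). Thus $\Lambda_{\mu(u)}(\mathcal{D}_u) = \lambda = \lambda_{\mu(u)}(M)$, and combined with the inequality $\lambda_k(M) \le \mathfrak{L}_k(M)$ at $k = \mu(u)$ this forces $\Lambda_{\mu(u)}(\mathcal{D}_u) = \mathfrak{L}_{\mu(u)}(M)$, so $\mathcal{D}_u$ is minimal. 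For the converse, suppose the nodal partition of some eigenfunction $u$ (associated with $\lambda$, having $\mu(u) = k$ nodal domains) is minimal. Then $\mathfrak{L}_k(M) = \Lambda_k(\mathcal{D}_u) = \lambda$; but the first part gives $\lambda_k(M) \le \mathfrak{L}_k(M) = \lambda$, while Courant's theorem gives $\mu(u) = k \le \min\{\ell : \lambda_\ell(M) = \lambda\}$, i.e.\ $\lambda \le \lambda_k(M)$. Chaining these forces equality, so $\min\{\ell : \lambda_\ell(M) = \lambda\} = k = \mu(u)$, which is exactly Courant-sharpness.

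Finally, for the last assertion, assume $\mathfrak{L}_k(M) = \lambda_k(M)$ and let $\mathcal{D}^*$ be any minimal $k$-partition. By Theorem \ref{thmMinPartReg}, $\mathcal{D}^*$ is strong, regular, and equispectral, so each domain satisfies $\lambda_1(D_i^*) = \mathfrak{L}_k(M) = \lambda_k(M)$. The strategy is to glue the first eigenfunctions $\varphi_i$ together, with appropriate signs, into a global eigenfunction of $M$ whose nodal set is exactly $N(\mathcal{D}^*)$. The delicate point—and what I expect to be the main obstacle—is showing that such a sign assignment exists and produces a genuine eigenfunction, i.e.\ that the glued function is $C^1$ across $N(\mathcal{D}^*)$ and satisfies the eigenvalue equation in the distributional sense on all of $M$. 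This requires the equal-angle meeting property at singular points together with a bipartite-type condition on the adjacency structure of the domains; the regularity theory ensures the normal derivatives match up in magnitude across each shared boundary arc, and one then needs a consistent choice of signs (a two-coloring argument on the partition graph) so that the normal derivatives also match in sign, yielding a $C^1$ global function. Once this gluing succeeds, the resulting function is an eigenfunction for $\lambda_k(M)$ with $\mathcal{D}^*$ as its nodal partition, proving $\mathcal{D}^*$ is nodal.
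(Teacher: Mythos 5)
First, a point of reference: the paper itself does not prove Theorem \ref{thmNodalMinimal}; it is quoted from \cite{HelHofTer09}, so your proposal has to be measured against the proof given there. Your first part (the min-max argument with zero-extended first eigenfunctions) and the implication ``Courant-sharp $\Rightarrow$ nodal partition minimal'' are correct and are essentially the standard arguments. One small remark: the zero extensions lie in $H^1(M)$ simply because each test function belongs to $H^1_0(D_i)$; no boundary regularity is needed, which is just as well, since the inequality $\lambda_k(M)\le \Lambda_k(\mathcal{D})$ must be proved for \emph{arbitrary} partitions before taking the infimum (alternatively, invoke the existence of a strong, regular minimal partition).

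The converse implication contains a genuine error. Courant's theorem gives $k=\mu(u)\le j_0:=\min\{\ell\,;\,\lambda_\ell(M)=\lambda\}$, and since the eigenvalues are non-decreasing this yields $\lambda_k(M)\le\lambda_{j_0}(M)=\lambda$ --- the \emph{same} inequality you already have from the first part, not the reverse inequality ``$\lambda\le\lambda_k(M)$'' that you assert with your ``i.e.''. The statement $\lambda\le\lambda_k(M)$ is equivalent to $j_0\le k$, which is exactly Courant-sharpness; your argument is therefore circular. Both available ingredients (Courant and part one) point in the same direction, and the missing reverse inequality is the actual content of the theorem: one must show that if $\lambda>\lambda_k(M)$, then the nodal partition of $u$ cannot be minimal, i.e.\ one can construct a $k$-partition of energy strictly below $\lambda$. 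In \cite{HelHofTer09} this is the heart of the proof of their Theorem 1.17 and requires a genuine deformation argument; it cannot be obtained by chaining Courant's theorem with the lower bound $\lambda_k(M)\le\mathfrak{L}_k(M)$.

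Your third part also has a gap, and moreover takes a harder route than necessary. The ``consistent choice of signs'' (bipartiteness of the adjacency graph of $\mathcal{D}^*$) is exactly what needs to be proved, not an auxiliary technicality: minimal partitions in general are \emph{not} bipartite (three mutually adjacent domains meeting at a singular point of order three, as in the hexagonal tilings constructed in this very paper, form an odd cycle), and the hypothesis $\mathfrak{L}_k(M)=\lambda_k(M)$ must be the thing that excludes this --- your sketch never uses it for that purpose. The standard proof avoids gluing altogether: let $V$ be the span of the zero-extended first eigenfunctions $\varphi_i$ of the domains of $\mathcal{D}^*$; by equispectrality the Rayleigh quotient equals $\lambda_k(M)$ identically on $V$. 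Choose $u^*\in V\setminus\{0\}$ orthogonal to the first $k-1$ eigenfunctions of $M$ (possible since $\dim V=k$); the min-max principle forces $R(u^*)\ge\lambda_k(M)$, hence equality, hence $u^*$ is an eigenfunction associated with $\lambda_k(M)$. Unique continuation then forbids $u^*$ from vanishing on any open set, so every coefficient in $u^*=\sum_i c_i\varphi_i$ is nonzero, and the nodal partition of $u^*$ is exactly $\mathcal{D}^*$. This replaces your $C^1$-matching and two-coloring problem by a three-line variational argument.
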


\begin{rem} Theorem \ref{thmNodalMinimal} implies that $\mathfrak{L}_k(M)=\lambda_k(M)$ if there exists a nodal minimal $k$-partition, and $\mathfrak{L}_k(M)>\lambda_k(M)$ otherwise.
\end{rem}
 Let us note that eigenfunctions associated with $\lambda_2(M)\,$ always have two nodal domains. Therefore, according to Theorem \ref{thmNodalMinimal}, the notion of minimal $2$-partition coincides with the notion of nodal partition of a second eigenfunction. The problem of finding minimal $k$-partitions becomes interesting for $k\ge 3\,$, which we will assume in the rest of the paper.

\subsection{Summary of the results}

To use the nodal partitions, let us give explicitly the eigenvalues of the Laplacian on the general torus
\begin{equation*}
	\Tor(a,b)= \left(\RR/a\ZZ\right)\times \left(\RR/b\ZZ\right)\,,\qquad \mbox{ with }\quad 0<b \leq a\,.
\end{equation*}
The Laplacian on $\Tor(a,b)$ is unitarily equivalent to the Laplacian on the rectangle $\Rec(a,b)=(0,a)\times(0,b)$ with periodic boundary conditions, and its spectrum can be computed by separation of variables.
\begin{prop}
\label{propNodalTorus}
The eigenvalues of the Laplacian on $\Tor(a,b)$ are
\[\lambda_{m,n}(a,b)=4\pi^2\left(\frac{m^2}{a^2}+\frac{n^2}{b^2}\right)\,,\quad\mbox{with}\quad m,\, n \in \NN_{0}\,.\]
\end{prop}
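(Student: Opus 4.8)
The plan is to diagonalize the Laplacian explicitly by separation of variables, exploiting the product structure of $\Tor(a,b)$. First I would use the unitary equivalence already noted in the excerpt to identify $L^2(\Tor(a,b))$ with the subspace of $L^2(\Rec(a,b))$ consisting of functions extended periodically, so that $-\Delta=-(\partial_x^2+\partial_y^2)$ acts on functions that are $a$-periodic in $x$ and $b$-periodic in $y$. This reduces the problem to finding a complete system of eigenfunctions compatible with these periodicity conditions.

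Next I would exhibit the candidate eigenfunctions. For $(p,q)\in\ZZ^2$, set $\phi_{p,q}(x,y)=\exp\!\left(2\pi i\left(\tfrac{p x}{a}+\tfrac{q y}{b}\right)\right)$; each $\phi_{p,q}$ is smooth and satisfies the periodicity conditions, and a direct computation gives $-\Delta\phi_{p,q}=4\pi^2\left(\tfrac{p^2}{a^2}+\tfrac{q^2}{b^2}\right)\phi_{p,q}$. Thus every $\phi_{p,q}$ is an eigenfunction with eigenvalue $4\pi^2(p^2/a^2+q^2/b^2)$. Since this value depends only on $|p|$ and $|q|$, restricting to $m=|p|$ and $n=|q|$ in $\NN_0$ enumerates all the distinct eigenvalues $\lambda_{m,n}(a,b)$, the multiplicity of each being recorded by the admissible sign choices (one may equally replace the complex exponentials by the corresponding real cosine/sine products if a real basis is preferred).

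The one step requiring genuine justification, rather than routine verification, is completeness: I must show that the $\phi_{p,q}$ exhaust the eigenfunctions, equivalently that $\{\phi_{p,q}\}_{(p,q)\in\ZZ^2}$ is (after normalization by $(ab)^{-1/2}$) an orthonormal basis of $L^2(\Tor(a,b))$. This is the classical completeness of the two-dimensional Fourier system, which I would obtain either from the one-dimensional theory on each circle factor together with a tensor-product argument, or directly by observing that the trigonometric polynomials separate points and hence are dense in $C(\Tor(a,b))$ by Stone--Weierstrass, whence dense in $L^2$.

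Finally, since $\Tor(a,b)$ is compact, $-\Delta$ is self-adjoint with compact resolvent, so its spectrum is purely discrete and consists exactly of eigenvalues accumulating only at infinity. Having produced a complete orthonormal family of eigenfunctions, I conclude that the listed values $\lambda_{m,n}(a,b)=4\pi^2(m^2/a^2+n^2/b^2)$, $m,n\in\NN_0$, exhaust the spectrum, which proves the proposition. The main obstacle is thus entirely contained in the completeness step; the eigenvalue computation itself is immediate.
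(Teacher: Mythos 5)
Your proof is correct and takes essentially the same approach as the paper, which simply states that the Laplacian on $\Tor(a,b)$ is unitarily equivalent to the Laplacian on $\Rec(a,b)$ with periodic boundary conditions and that the spectrum follows by separation of variables, leaving the details (Fourier eigenfunctions, completeness, discreteness of the spectrum) implicit. You have merely written out in full the standard argument the paper alludes to, including the one genuinely non-trivial step (completeness of the trigonometric system), so there is nothing to change.
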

Let us recall that, according to \cite{Len15Torus}, the only non-constant Courant-sharp eigenfunctions for the torus $\Tor(1,1)$ are associated with $\lambda_2(\Tor(1,1))=4\pi^2\,$. According to Theorem \ref{thmNodalMinimal}, this implies that, as soon as $k \ge 3\,$, a  minimal $k$-partition of $\Tor(1,1)$ is not nodal and we have to find new candidates.\\ 
Let us introduce a particular partition of $\Tor(a,b)$ into vertical strips.
 \begin{defin}\label{definDk}
  We denote by $\mathcal{D}_{k}(a,b)$ the $k$-partition of $\Tor(a,b)$ with domains
  \begin{equation*}
 D_i=\left(\frac{i-1}{k}a,\frac{i}{k}a\right)\times\left(0,b\right)\,,\qquad \mbox{ for }i=1,\dots, k\,.
\end{equation*}
\end{defin} 

We have $\Lambda_{k}(\mathcal{D}_{k}(a,b))=k^2\pi^2/a^2\,$, and any partition obtained from $\mathcal{D}_k(a,b)$ by a translation has the same energy. Let us note that when $k$ is even, $\mathcal{D}_k(a,b)$ is the nodal partition of the eigenfunction $(x,y)\mapsto \sin(k\pi x/a)\,$, associated with the eigenvalue $\lambda_{k/2,0}(a,b)$.

Let us now focus on the torus $\Tor(1,b)\,$, with $b\in (0,1]\,$. Following \cite{HelHof14}, we want to know for which values of $b$ the partition $\mathcal{D}_k(1,b)$ is minimal.
Then we define the \emph{transition value} $b_k$ by
\begin{equation}\label{eq.bk}
 b_k=\sup\{b>0~;~\mathcal{D}_{k}(1,b)\mbox{ is a minimal $k$-partition of } \Tor(1,b) \}\,.
 \end{equation}
This notion is well adapted since $\mathcal{D}_{k}(1,b)$ is a minimal $k$-partition of $\Tor(1,b)$, for any $b\in(0,b_{k}]$, as it will be established in Proposition~\ref{prop.bk}.\\

When $k$ is even, a direct application of  Theorem \ref{thmNodalMinimal} gives us the transition value (this result and its detailed proof can be found in \cite{HelHof14}).
\begin{prop}\label{propPartEven}
If $k$ is even, then $b_{k}=2/k\,$. Furthermore, if $b < 2/k\,$, $\mathcal{D}_k(1,b)$ is, up to a translation, the only minimal $k$-partition of $\Tor(1,b)\,$.  
\end{prop}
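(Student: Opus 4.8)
The plan is to regard the strip partition $\mathcal{D}_k(1,b)$ as a nodal partition and to feed it through the dictionary of Theorem \ref{thmNodalMinimal} relating minimal partitions and Courant-sharp eigenfunctions. Since $k$ is even, $\mathcal{D}_k(1,b)$ is the nodal partition of $u(x,y)=\sin(k\pi x)$, an eigenfunction associated with the eigenvalue $\lambda_{k/2,0}(1,b)=k^2\pi^2$, and it has $\mu(u)=k$ nodal domains. By Theorem \ref{thmNodalMinimal}, $\mathcal{D}_k(1,b)$ is minimal if and only if $u$ is Courant-sharp, that is, if and only if $\min\{\ell\,;\,\lambda_\ell(\Tor(1,b))=k^2\pi^2\}=k$. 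Everything thus reduces to counting, as a function of $b$, the eigenvalues lying strictly below $k^2\pi^2$.

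First I would carry out this count using Proposition \ref{propNodalTorus}: an eigenvalue $4\pi^2(p^2+q^2/b^2)$ is strictly below $k^2\pi^2=4\pi^2(k/2)^2$ exactly when $p^2+q^2/b^2<(k/2)^2$. When $b\le 2/k$ we have $1/b^2\ge (k/2)^2$, so any term with $q\ne 0$ already reaches $(k/2)^2$; the only admissible solutions have $q=0$ and $|p|\le k/2-1$, which yields exactly $k-1$ eigenvalues below $k^2\pi^2$ (accounting for the pairing $p\leftrightarrow -p$). Hence $\lambda_k(\Tor(1,b))=k^2\pi^2$ and $\min\{\ell\,;\,\lambda_\ell=k^2\pi^2\}=k=\mu(u)$, so $u$ is Courant-sharp and $\mathcal{D}_k(1,b)$ is minimal. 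When $b>2/k$, instead $1/b^2<(k/2)^2$, so the pair $(p,q)=(0,\pm 1)$ now contributes an eigenvalue $4\pi^2/b^2$ strictly below $k^2\pi^2$; there are then at least $k+1$ eigenvalues below $k^2\pi^2$, whence $\min\{\ell\,;\,\lambda_\ell=k^2\pi^2\}>k=\mu(u)$, $u$ is not Courant-sharp, and the converse implication of Theorem \ref{thmNodalMinimal} forbids $\mathcal{D}_k(1,b)$ from being minimal. Combining the two cases gives $b_k=2/k$.

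For the uniqueness statement when $b<2/k$, I would exploit that the inequality $1/b^2>(k/2)^2$ is now strict, so any $(p,q)$ with $p^2+q^2/b^2=(k/2)^2$ must satisfy $q=0$ and $p=\pm k/2$. Consequently the eigenspace associated with $\lambda_k(\Tor(1,b))=k^2\pi^2$ is two-dimensional, spanned by $\cos(k\pi x)$ and $\sin(k\pi x)$. As $\mathfrak{L}_k(\Tor(1,b))=\lambda_k(\Tor(1,b))$, the last assertion of Theorem \ref{thmNodalMinimal} forces every minimal $k$-partition to be nodal; by equispectrality (Theorem \ref{thmMinPartReg}) the associated eigenfunction has eigenvalue $k^2\pi^2$ and therefore has the form $C\sin(k\pi(x-x_0))$. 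Its nodal set is the family of vertical lines $x\equiv x_0+j/k$, so its nodal partition is precisely $\mathcal{D}_k(1,b)$ translated by $x_0$, giving uniqueness up to translation.

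The only genuinely delicate point is the lattice-point count near the threshold, and in particular the behaviour exactly at $b=2/k$: there the point $(0,\pm 1)$ lands on $k^2\pi^2$ rather than below it, so the number of lower eigenvalues stays equal to $k-1$ and $\mathcal{D}_k(1,2/k)$ remains minimal, confirming that the supremum $b_k=2/k$ is attained. The enlargement of the eigenspace at $b=2/k$ is also what blocks the uniqueness argument at the endpoint, consistent with that part of the statement being restricted to $b<2/k$.
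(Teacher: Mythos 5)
Your proposal is correct and takes essentially the same approach as the paper: the paper states that Proposition \ref{propPartEven} follows from ``a direct application of Theorem \ref{thmNodalMinimal}'' (deferring details to \cite{HelHof14}), and your Courant-sharp criterion plus the lattice-point count of eigenvalues below $k^2\pi^2$ is exactly that application. Your uniqueness argument via the two-dimensional eigenspace spanned by $\cos(k\pi x)$ and $\sin(k\pi x)$, and your observation that the eigenspace enlarges at $b=2/k$, also match the paper's remark that minimality persists at the endpoint while uniqueness fails there due to the multiplicity of $\lambda_k(\Tor(1,b))$.
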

Let us note that if $k$ is even and $b=b_{k}\,$, $\mathcal{D}_k(1,b)$ is no longer the only minimal partition of $\Tor(1,b)$ up to a translation, due to the multiplicity of the eigenvalue $\lambda_{k}(\Tor(1,b))$\,. We will see this in more detail in Section \ref{subsecTransVal}.\\

When $k$ is odd, \cite[Theorem 1.1]{HelHof14} proves that $b_{k}\geq 1/k\,$. 
Before stating our improvement of this estimate, let us introduce some notation. For $b\in (0,1]\,$, we consider the infinite strip $\Str_b=\RR\times \left(0,b\right)\,$ and we define
\begin{equation}\label{eq.optimibkFK}
	\bS_k =\sup\left\{b\in(0,1]\,;\, j(b)>k^2\pi^2\right\}\,,\qquad \mbox{ with }\qquad
	j(b)=\inf_{\Omega\subset \Str_b, |\Omega|\le b}\lambda_1(\Omega)\,.
\end{equation}
\begin{thm}
\label{thmbkOdd}
If $k$ is odd, then $b_{k}\geq \bS_{k}>1/k$.
\end{thm}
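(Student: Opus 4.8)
The plan is to prove the two inequalities separately, reducing each to an eigenvalue estimate on the strip $\Str_b$.

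\emph{Reduction for $b_k\ge\bS_k$.} By the definition of $b_k$ together with Proposition~\ref{prop.bk}, it suffices to show that $\mathcal D_k(1,b)$ is minimal whenever $j(b)>k^2\pi^2$, that is, $\mathfrak L_k(\Tor(1,b))=k^2\pi^2$ for such $b$. The upper bound $\mathfrak L_k\le k^2\pi^2$ is given by $\Lambda_k(\mathcal D_k(1,b))=k^2\pi^2$, so I only need the lower bound. Fix any $k$-partition $\mathcal D=(D_i)$; since the total area is $b$, some domain $D$ has $|D|\le b/k\le b$, and it is enough to prove $\lambda_1(D)\ge k^2\pi^2$ for this $D$.

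I would run a dichotomy according to the topology of $D$ in $\Tor(1,b)$: either (i) $D$ is disjoint from some horizontal circle $\{y=y_0\}$ \emph{and} from some vertical circle $\{x=x_0\}$, or (ii) $D$ meets every horizontal circle, or (iii) $D$ meets every vertical circle (these exhaust all possibilities). In case (i), cutting along those two circles and unrolling the $x$-factor carries $D$ isometrically onto a domain $\Omega\subset\Str_b$ with $|\Omega|=|D|\le b$ and the same Dirichlet boundary, so that $\lambda_1(D)=\lambda_1(\Omega)\ge j(b)>k^2\pi^2$ directly from the definition of $j(b)$. Cases (ii) and (iii) are the essential ones. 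In case (ii) I would lift to the universal cover, where the ground state $u$ is $b$-periodic in $y$, and use the one-dimensional Dirichlet inequality on each horizontal slice, $\int_\RR|\partial_x u(\cdot,y)|^2\,dx\ge\pi^2\rho(y)^{-2}\int_\RR|u(\cdot,y)|^2\,dx$, where $\rho(y)$ is the slice width and $\int_0^b\rho\le b/k$; case (iii) is symmetric, the two circles (of lengths $1$ and $b$) exchanging roles and the bound $\pi^2/(b/k)^2\ge k^2\pi^2$ using $b\le1$. In the ``straight'' situation $\rho\equiv 1/k$ these estimates give exactly $k^2\pi^2$, which is what one wants; the point requiring care is recorded below.

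\emph{Reduction for $\bS_k>1/k$.} Writing $J(A)=\inf\{\lambda_1(\Omega):\Omega\subset\Str_1,\ |\Omega|\le A\}$, the scaling $\Omega\mapsto\Omega/b$ yields $j(b)=J(1/b)/b^2$, so in particular $j(1/k)=k^2J(k)$. As $J$ is non-increasing, for $b>1/k$ one has $j(b)\ge J(k)/b^2$, a quantity continuous in $b$ with limit $k^2J(k)$ as $b\downarrow 1/k$. Hence, once the strict gap $J(k)>\pi^2$ is known, $j(b)>k^2\pi^2$ for $b$ slightly larger than $1/k$, and therefore $\bS_k>1/k$. To obtain $J(k)>\pi^2$ I would use that $\pi^2$ is the bottom of the spectrum of the width-one strip, so $J(k)\ge\pi^2$, and argue strictness from the $y$-Fourier decomposition $u=\sum_n a_n(x)\sqrt2\sin(n\pi y)$: the modes $n\ge2$ are penalized by $(n^2-1)\pi^2\ge3\pi^2$, while reaching $\pi^2$ in the $n=1$ mode would require $a_1$ to be constant on an unbounded $x$-set, forcing $|\Omega|$ to diverge. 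The rectangle $(0,k)\times(0,1)$ shows $J(k)\le\pi^2+\pi^2/k^2$, and a matching lower bound of the same order closes the gap.

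\emph{Main obstacle.} The embedding step (i) and the scaling reduction are routine; the delicate part is the essential cases (ii)–(iii). There the ground state may concentrate in a ``bulge'' where $\rho$ is large, so the weighted slice bound above—being a $u$-weighted average of $\pi^2\rho^{-2}$—is \emph{not} by itself conclusive. The resolution I expect is a trade-off argument: if $u$ concentrates in a bulge, then it is essentially supported in a bounded $y$-band, which places $D$ back into the strip regime governed by $j(b)$; whereas if $u$ genuinely spreads over the full $y$-period, then $\rho\le 1/k$ on a set carrying most of the mass, and the slice bound gives $\lambda_1(D)\ge k^2\pi^2$. Making this quantitative—equivalently, proving that the only way a domain of area $\le b/k$ can have $\lambda_1<k^2\pi^2$ is to be (essentially) strip-embeddable, which $j(b)>k^2\pi^2$ forbids—is the crux of the argument and the step I expect to demand the most work.
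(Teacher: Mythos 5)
Your reduction is sound as far as it goes (pick a domain of area at most $b/k$, and treat separately the domains that unroll into a fundamental rectangle), but the part you yourself flag as ``the crux'' --- cases (ii) and (iii), where the domain wraps around the torus --- is exactly the content of the theorem, and your proposed tool cannot close it. The slice inequality $\int|\partial_x u|^2\,dx\ge \pi^2\rho(y)^{-2}\int|u|^2\,dx$ only yields a $u$-weighted average of $\pi^2\rho^{-2}$, and the constraint $\int\rho\le b/k$ is an $L^1$ bound on $\rho$: nothing prevents the ground state from concentrating in a bulge where $\rho$ is of order $1$, and no pointwise bound $\rho\le b/k$ (which your stated estimate $\pi^2/(b/k)^2\ge k^2\pi^2$ in case (iii) would require) is available. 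The paper never proves the per-domain claim you are aiming at. Instead it argues globally on the partition: Lemma~\ref{lemNoDisk} excludes disk-homeomorphic domains by Steiner-symmetrizing a planar lift into $\Str_b$ (using $|D|\le b$ and minimality, not $|D|\le b/k$), and for a partition \emph{all} of whose domains wrap, Lemma~\ref{lemLower} lifts the entire partition to the four-sheeted covering $\Tor(2,2b)$, invokes a topological result of \cite{HelHof14} showing each pull-back $\Pi_4^{-1}(D_i)$ has exactly two components exchanged by $\sigma$, and then applies the Courant-type bound of Lemma~\ref{lemCourantSym} with antisymmetric test functions, $\lambda_k^{\sigma,a}\le\Lambda_k(\mathcal{D})$, followed by the computation $\lambda_k^{\sigma,a}=k^2\pi^2$. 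That last equality is where the hypotheses $k$ odd and $b\le 1/\sqrt{k^2-1}$ enter (for $k$ even one only gets $(k-1)^2\pi^2$); your argument never uses the parity of $k$, which is a symptom that the statement you are trying to prove domain-by-domain is a genuinely different, and unproven, claim --- plausible perhaps, but it would require exactly the quantitative ``bulge versus spreading'' dichotomy you defer, i.e.\ concentration-compactness-type analysis rather than slice estimates.

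There is a second, smaller gap in your proof of $\bS_k>1/k$. The scaling identity $j(b)=J(1/b)/b^2$ and the monotonicity of $J$ are fine, and they do replace the paper's continuity argument; but the strict inequality $J(k)>\pi^2$ is not established by your Fourier sketch. Showing that no single domain of finite area \emph{attains} $\pi^2$ does not bound the \emph{infimum} away from $\pi^2$: a minimizing sequence of domains of area at most $k$ could a priori have $\lambda_1\downarrow\pi^2$. The paper closes precisely this gap by citing Bucur's concentration-compactness result \cite{Buc00}, which provides a quasi-open minimizer $\Omega^*$ with $|\Omega^*|=b$ for the problem defining $j$, after which strict domain monotonicity gives $j(1/k)>k^2\pi^2$ (this is the content of Lemma~\ref{lembFK}). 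To make your route rigorous you would need either that existence theorem or an explicit quantitative lower bound $J(k)\ge\pi^2+c(k)$ with $c(k)>0$, which your decomposition does not yet supply.
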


We are also interested in obtaining upper bounds of $\mathfrak{L}_k(\Tor(1,b))$ for $b \in (0,1]\,$. Since $\Lambda_k(\mathcal{D}_k(1,b))= k^2\pi^2\,$, we always have $\mathfrak{L}_k(\Tor(1,b))\le k^2\pi^2\,$. For some values of the parameters, we construct hexagonal partitions which give us an improved upper bound.

\begin{thm}
\label{thmExkPart}
For $k\in\{3\,,\,4\,,\,5\}\,$, there exists $\bH_k \in (0,1)$ such that, for any $b \in (\bH_k,1]\,$, there exists a tiling of $\Tor(1,b)$ by $k$ hexagons that satisfies the equal angle meeting property. We denote by $\Hex_k(b)$ the corresponding tiling domain, and we have
\[\mathfrak{L}_k(\Tor(1,b))\le \min\left(k^2\pi^2, \lambda_1(\Hex_k(b))\right)
,\quad \forall b\in(\bH_{k},1] \,.\]
More explicitly, we can choose 
\[\bH_3=\frac{\sqrt{11}-\sqrt{3}}{4}\simeq 0.396\,,\quad \bH_4=\frac{1}{2\sqrt{3}}\simeq 0.289< b_{4}=\frac 12\,,\quad\mbox{and }\quad \bH_5=\frac{\sqrt{291}-5\sqrt{3}}{36}\simeq 0.233\,.\]
\end{thm}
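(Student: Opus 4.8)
The plan is to realise each $\Hex_k(b)$ as the fundamental tile of a genuine honeycomb tiling of the plane that descends to $\Tor(1,b)$. First I would restrict attention to \emph{centrally symmetric equiangular hexagons}, i.e.\ hexagons all of whose interior angles equal $2\pi/3$ and whose opposite edges are equal and parallel. Such a hexagon is determined, up to translation, by the three edge lengths $\ell_1,\ell_2,\ell_3>0$ of one half of its boundary, subject only to the closure relation forced by the angle condition; and it is classical that it tiles the plane by the translations of a lattice $\Lambda_H=\ZZ\mathbf{t}_1+\ZZ\mathbf{t}_2$, where $\mathbf{t}_1,\mathbf{t}_2$ join the centre of $H$ to the centres of two of its neighbours and are explicit linear functions of the $\ell_i$. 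In this honeycomb every vertex is trivalent and all three angles equal $2\pi/3$, so the \emph{equal angle meeting property} holds automatically.

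The key reduction is then the following. Identifying $\Tor(1,b)$ with $\RR^2/L_b$, where $L_b=\ZZ(1,0)+\ZZ(0,b)$, I would require $L_b$ to be a sublattice of $\Lambda_H$ of index $k$. When this holds, the planar honeycomb descends to a tiling of the torus by exactly $k$ cells, each a translate of $H$, and I set $\Hex_k(b):=H$. The sublattice and index conditions amount to expressing $(1,0)$ and $(0,b)$ as prescribed integer combinations of $\mathbf{t}_1,\mathbf{t}_2$, which yields, for a fixed combinatorial configuration, a system relating $\ell_1,\ell_2,\ell_3$ to $b$ and $k$; solving it produces the admissible hexagon for each $b$. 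Because the $k$ tiles are mutually congruent translates, they share the same first Dirichlet eigenvalue, so the energy of the induced partition equals $\lambda_1(\Hex_k(b))$. Combining this with the strip bound $\mathfrak{L}_k(\Tor(1,b))\le\Lambda_k(\mathcal{D}_k(1,b))=k^2\pi^2$ already noted in the excerpt gives the stated inequality with the minimum.

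Finally, to pin down $\bH_k$ I would track the positivity of the edge lengths. The lattice system admits a solution with all $\ell_i>0$ only for $b$ in an interval $(\bH_k,1]$; at $b=\bH_k$ one edge length vanishes and the hexagon degenerates (into a rectangle or a parallelogram-type cell), which is the geometric meaning of the threshold. Carrying this out separately for the three admissible configurations $k=3,4,5$ and reading off the critical value should give $\bH_3=(\sqrt{11}-\sqrt3)/4$, $\bH_4=1/(2\sqrt3)$, and $\bH_5=(\sqrt{291}-5\sqrt3)/36$. The main obstacle is exactly this last, case-dependent step: choosing the correct sublattice embedding of $L_b$ in $\Lambda_H$ for each $k$ (there is a genuine combinatorial choice of how the $k$ hexagons wrap around the torus), and then verifying that the resulting hexagon remains embedded and non-degenerate throughout $(\bH_k,1]$, so that one truly obtains a strong regular partition in the sense of Definition~\ref{definPartition}. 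Once the configuration is fixed, the algebra producing the surds above is routine.
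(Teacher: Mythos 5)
Your outline is, in its skeleton, the paper's own proof rather than a different route: the paper also passes to the plane via the covering $\Pi_\infty$, characterizes the admissible tilings as honeycombs invariant under a lattice containing $\ZZ\mathbf{e}_1+\ZZ(b\mathbf{e}_2)$ as an index-$k$ sublattice (its Lemma \ref{lemBasis}, where your ``prescribed integer combinations'' appear as an integer matrix $V$ with $\det V=\pm k$), makes the same case-by-case choice of wrapping data for $k=3,4,5$ (Proposition \ref{propHexTilings}), and reads $\bH_k$ off a degeneration condition. Your degeneration criterion (an edge length of the hexagon tending to $0$) is equivalent to the paper's (an angle of a certain triangle reaching $2\pi/3$, so that its Fermat point collides with a vertex, Theorem \ref{thmFermat}), and the concluding energy estimate --- congruent tiles have equal $\lambda_1$, combined with the strip bound $k^2\pi^2$ --- is identical.

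The step that fails as written is your parametrization of the tile. You fix the hexagon ``up to translation'' by its three edge lengths $\ell_1,\ell_2,\ell_3$, which pins its orientation, and you make $\mathbf{t}_1,\mathbf{t}_2$ explicit \emph{linear} functions of the $\ell_i$. But once the wrapping integers are chosen, the two vector equations expressing $(1,0)$ and $(0,b)$ in terms of $\mathbf{t}_1,\mathbf{t}_2$ determine the basis $(\mathbf{t}_1,\mathbf{t}_2)$ completely (invert the $2\times 2$ integer matrix); you are then left with four scalar equations in only three unknowns $\ell_1,\ell_2,\ell_3$, an overdetermined system with no solution for all but isolated values of $b$. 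Concretely, for $k=3$ the forced basis is $\mathbf{u}_1=(2/3,-b/3)$, $\mathbf{u}_2=(1/3,b/3)$, and the compatible hexagon has an orientation that varies with $b$, so no fixed-orientation family of equiangular hexagons can serve on an interval of $b$. The repair is either to reinstate the rotation angle as a fourth unknown (making the system nonlinear), or to reverse the construction as the paper does: the wrapping matrix forces the adapted basis, and the hexagon is reconstructed from the Fermat point of the triangle with vertices $P_0$, $P_0+\mathbf{u}_1$, $P_0+\mathbf{u}_2$ (Lemma \ref{lemExistTiling}); this point exists in the interior precisely when all angles of that triangle are less than $2\pi/3$, and Lemma \ref{lemCondFermat} converts that condition into explicit inequalities in $b$ whose endpoints are $\bH_3$, $\bH_4$, $\bH_5$. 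Until the rotational degree of freedom is put back in, the ``routine algebra'' producing those surds cannot even be set up.
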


The paper is organized as follows. In Section  \ref{secTransitions}, we analyse the transition values and  give a proof of Theorem \ref{thmbkOdd}. We also set out some arguments to conjecture the transition values and for such $b$, we present some candidates to be minimal partitions, which are obtained from eigenfunctions of the Laplacian on a covering of the torus. In Section \ref{secNumerics}, we describe our numerical method, which is based on the work of B.~Bourdin, D.~Bucur, and {\'E}.~Oudet (2009) in \cite{BouBucOud09}. For a fixed $ k\in\{3\,,\,4\,,\,5\}$, we compute candidates to be a minimal $k$-partition of $\Tor(1,b)\,$. These computations suggest the existence of hexagonal tilings of a specific type. In Section \ref{secTilings}, we construct explicitly  these tilings and compute their energy. This improves the previously known bounds of the minimal energy. Near the conjectured transition values and near $b=1\,$, the numerical simulations give us better candidates.

\section{Transitions between different types of minimal partitions}
\label{secTransitions}

\subsection{Transition value}
\label{subsecTransDef}

Let us first show that the notion of transition value $b_{k}$ introduced in \eqref{eq.bk} is well behaved.
\begin{prop}\label{prop.bk}
For any $b\in(0,b_{k}]\,$, $\mathcal{D}_{k}(1,b)$ is a minimal $k$-partition of $\Tor(1,b)\,$.
\end{prop}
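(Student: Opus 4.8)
The statement to prove is that $\mathcal{D}_k(1,b)$ is minimal for \emph{every} $b \in (0, b_k]$, given only the definition of $b_k$ as a supremum in \eqref{eq.bk}. The plan is to establish a monotonicity principle: if the strip partition is minimal at some parameter $b'$, then it is minimal at every smaller parameter $b \le b'$. Granting this, the result follows quickly: by definition of the supremum \eqref{eq.bk}, for any $b < b_k$ there exists $b'$ with $b < b' \le b_k$ (or arbitrarily close to $b_k$) at which $\mathcal{D}_k(1,b')$ is minimal, and monotonicity transports minimality down to $b$. The endpoint $b = b_k$ then requires a separate limiting argument.

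\emph{Monotonicity step.} The key observation is the scaling behaviour of the energy and of the optimal energy $\mathfrak{L}_k$. First, by Definition~\ref{definDk} we always have $\Lambda_k(\mathcal{D}_k(1,b)) = k^2\pi^2$, \emph{independently of $b$}. So minimality of $\mathcal{D}_k(1,b)$ is equivalent to the assertion $\mathfrak{L}_k(\Tor(1,b)) = k^2\pi^2$. Now I would compare partitions across different values of $b$. Given any competitor partition $\mathcal{D}'$ of $\Tor(1,b')$, one can map it to a partition of $\Tor(1,b)$ for $b < b'$ by the anisotropic dilation $(x,y) \mapsto (x, (b/b')y)$ that sends $\Tor(1,b')$ onto $\Tor(1,b)$. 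Under this contraction in the $y$-direction, domains shrink, so by domain monotonicity of the Dirichlet eigenvalue each $\lambda_1$ can only \emph{increase}; hence $\mathfrak{L}_k(\Tor(1,b)) \ge \mathfrak{L}_k(\Tor(1,b'))$ for $b < b'$. Combined with the universal upper bound $\mathfrak{L}_k(\Tor(1,b)) \le k^2\pi^2$, if equality $\mathfrak{L}_k(\Tor(1,b')) = k^2\pi^2$ holds at $b'$, then $k^2\pi^2 \ge \mathfrak{L}_k(\Tor(1,b)) \ge \mathfrak{L}_k(\Tor(1,b')) = k^2\pi^2$ forces equality at $b$ as well. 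This is exactly minimality of $\mathcal{D}_k(1,b)$.

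\emph{The endpoint $b = b_k$.} The subtle part is whether minimality is preserved at the supremum itself. Here I would argue by continuity: the map $b \mapsto \mathfrak{L}_k(\Tor(1,b))$ is (at least) upper semicontinuous, and on $(0, b_k)$ it equals the constant $k^2\pi^2$. Taking $b \uparrow b_k$ through values where the strip partition is minimal gives $\mathfrak{L}_k(\Tor(1,b_k)) \ge \limsup_{b \uparrow b_k} \mathfrak{L}_k(\Tor(1,b)) = k^2\pi^2$ via the monotonicity just established, while the upper bound gives $\mathfrak{L}_k(\Tor(1,b_k)) \le k^2\pi^2$; hence equality, i.e.\ minimality, at $b_k$.

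\emph{Expected obstacle.} The main technical point is the endpoint argument: establishing the correct semicontinuity of $b \mapsto \mathfrak{L}_k(\Tor(1,b))$ and making sure the supremum in \eqref{eq.bk} is attained. The monotonicity step itself is clean once the dilation and Dirichlet domain monotonicity are set up, but one must be careful that the dilation maps $k$-partitions to $k$-partitions preserving connectedness and disjointness (which it does, being a diffeomorphism of the torus), and that the induced metric distortion is controlled so that the eigenvalue comparison genuinely goes the right way. I expect the cleanest route at the endpoint is in fact to use the monotonicity inequality directly rather than a separate compactness argument, reducing the whole proposition to the single scaling inequality $\mathfrak{L}_k(\Tor(1,b)) \ge \mathfrak{L}_k(\Tor(1,b'))$ for $b \le b'$ together with the constant upper bound.
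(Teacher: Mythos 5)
Your treatment of $b<b_k$ is essentially the paper's argument: the anisotropic map $(x,y)\mapsto(x,(b/b')y)$ is a bijection between partitions of the two tori, and the resulting comparison $\mathfrak{L}_k(\Tor(1,b))\ge\mathfrak{L}_k(\Tor(1,b'))$ for $b\le b'$, together with the universal bound $\mathfrak{L}_k(\Tor(1,b))\le k^2\pi^2$, transports minimality down from parameters just below $b_k$. One quibble on justification: this is not ``domain monotonicity'' --- the contracted domain is not a subset of the original, and the two live on different tori --- it is the change-of-variables estimate on Rayleigh quotients, i.e.\ inequality \eqref{eqIneqLambda1} in the paper; your conclusion there is nonetheless correct.

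The genuine gap is at the endpoint $b=b_k$. You claim that $\mathfrak{L}_k(\Tor(1,b_k))\ge\limsup_{b\uparrow b_k}\mathfrak{L}_k(\Tor(1,b))$ follows ``via the monotonicity just established'', and you conclude that the whole proposition reduces to the single inequality $\mathfrak{L}_k(\Tor(1,b))\ge\mathfrak{L}_k(\Tor(1,b'))$ for $b\le b'$ plus the constant upper bound. Both claims are false: since $\mathfrak{L}_k$ is non-increasing in $b$, monotonicity gives exactly the \emph{reverse} inequality $\mathfrak{L}_k(\Tor(1,b_k))\le\mathfrak{L}_k(\Tor(1,b))$ for $b<b_k$. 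A non-increasing function equal to $k^2\pi^2$ on $(0,b_k)$ could a priori jump down at $b_k$; in that scenario the supremum in \eqref{eq.bk} is still $b_k$, yet $\mathcal{D}_k(1,b_k)$ is not minimal, and nothing in your argument rules this out (you correctly flag semicontinuity as the obstacle, but then discard it in favour of monotonicity, which cannot do the job). What closes the endpoint is the quantitative reverse estimate, i.e.\ the right-hand half of \eqref{eqIneqLambda1}: pulling back a minimal partition of $\Tor(1,b_k)$ by the contraction gives, for every $b'<b_k\,$,
\[
k^2\pi^2=\mathfrak{L}_k(\Tor(1,b'))\le\frac{b_k^2}{b'^2}\,\mathfrak{L}_k(\Tor(1,b_k))\,,
\]
and letting $b'\uparrow b_k$ yields $\mathfrak{L}_k(\Tor(1,b_k))\ge k^2\pi^2\,$. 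This two-sided scaling estimate --- monotonicity \emph{and} continuity of $b\mapsto\mathfrak{L}_k(\Tor(1,b))$ --- is precisely the content of Proposition \ref{propMonotonicity} in the paper, and it is the continuity half, which you attempted to replace by monotonicity, that handles the endpoint.
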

The proof is a direct consequence of the following properties of $b \mapsto \mathfrak{L}_k(\Tor(1,b))\,$:

\begin{prop} \label{propMonotonicity} The function $b\mapsto \mathfrak{L}_k(\Tor(1,b))\,$, defined on $(0,1]\,$, is continuous and  non-increasing.
\end{prop}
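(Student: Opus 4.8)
The plan is to relate the energies of tori with different values of $b$ through a single anisotropic dilation and to keep track of its effect on first Dirichlet eigenvalues.

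First I would fix $s>0$ and introduce the diffeomorphism $\Phi_s:\Tor(1,b)\to\Tor(1,sb)$ defined by $\Phi_s(x,y)=(x,sy)\,$. Since $\Phi_s$ is a smooth diffeomorphism with smooth inverse $\Phi_{1/s}\,$, it sends open, connected, mutually disjoint subsets to sets with the same properties, and therefore induces a bijection between $\mathfrak{P}_k$ on $\Tor(1,b)$ and $\mathfrak{P}_k$ on $\Tor(1,sb)\,$. This reduces the comparison of the two optimization problems to understanding how $\lambda_1$ transforms under $\Phi_s\,$.

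Next, for an open set $D$ I would compare $\lambda_1(\Phi_s(D))$ with $\lambda_1(D)$ via the variational characterization. The substitution $v=u\circ\Phi_s^{-1}$ is an isomorphism from $H_0^1(D)$ onto $H_0^1(\Phi_s(D))$ (as $\Phi_s$ is bi-Lipschitz), and a direct change of variables rewrites the Rayleigh quotient of $v$ on $\Phi_s(D)$ as
\[
\frac{\int_D (\partial_x u)^2 + s^{-2}\int_D (\partial_y u)^2}{\int_D u^2}\,.
\]
Since $\min(1,s^{-2})\,|\nabla u|^2 \le (\partial_x u)^2 + s^{-2}(\partial_y u)^2 \le \max(1,s^{-2})\,|\nabla u|^2$ pointwise, taking the infimum over $u$ yields the two-sided bound
\[
\min(1,s^{-2})\,\lambda_1(D) \le \lambda_1(\Phi_s(D)) \le \max(1,s^{-2})\,\lambda_1(D)\,.
\]

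Because the factors $\min(1,s^{-2})$ and $\max(1,s^{-2})$ do not depend on $D\,$, taking the maximum over the $k$ domains of a partition and then the infimum over all partitions (which correspond bijectively under $\Phi_s$) propagates these bounds to the energies:
\[
\min(1,s^{-2})\,\mathfrak{L}_k(\Tor(1,b)) \le \mathfrak{L}_k(\Tor(1,sb)) \le \max(1,s^{-2})\,\mathfrak{L}_k(\Tor(1,b))\,.
\]
Monotonicity then follows by taking $s\ge 1\,$: the right-hand inequality gives $\mathfrak{L}_k(\Tor(1,sb))\le\mathfrak{L}_k(\Tor(1,b))\,$, which is exactly the statement that $b\mapsto\mathfrak{L}_k(\Tor(1,b))$ is non-increasing. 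For continuity at a point $b_0\,$, I would set $s=b/b_0$ so that $sb_0=b\,$; letting $b\to b_0$ forces $s\to 1\,$, and since both $\min(1,s^{-2})$ and $\max(1,s^{-2})$ tend to $1\,$, a squeeze gives $\mathfrak{L}_k(\Tor(1,b))\to\mathfrak{L}_k(\Tor(1,b_0))\,$. The computations are elementary once the dilation is introduced, so the point requiring most care---the \emph{main obstacle}---is the functional-analytic bookkeeping: verifying that $u\mapsto u\circ\Phi_s^{-1}$ is an isomorphism of the relevant $H_0^1$ spaces and that the variational definition of $\lambda_1$ behaves correctly for the possibly irregular domains of an arbitrary partition, so that the infima on the two tori genuinely correspond under $\Phi_s\,$.
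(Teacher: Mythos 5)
Your proof is correct and follows essentially the same approach as the paper: the anisotropic dilation $(x,y)\mapsto(x,sy)$, the two-sided comparison of first Dirichlet eigenvalues via the Rayleigh quotient, monotonicity from one side of the bound, and continuity from the squeeze as $s\to 1$. The only (harmless) difference is that you pass the bounds through the infimum using the induced bijection between partition classes, whereas the paper applies the bounds to a minimal partition of each torus, relying on the existence of minimizers; your variant slightly weakens the hypotheses needed.
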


\begin{proof}
 Let us pick $b$ and $b'$ in $(0,1]\,$, with $b'<b\,$. We define a mapping $F$ from $\Tor(1,b')$ to $\Tor(1,b)$ by
 \[\begin{array}{cccc}
       F: & \Tor(1,b')& \rightarrow & \Tor(1,b)\\
            & (x,y) & \mapsto & \left(x,\frac{b}{b'}y\right)\,.\\
  \end{array}\]
If $\omega$ is an open set in $\Tor(1,b')\,$, $F(\omega)$ is an open set in $\Tor(1,b)\, $. By a direct estimate of the Rayleigh quotients, using the change of variable defined by $F\,$, we obtain 
\begin{equation}
\label{eqIneqLambda1}
\lambda_1(F(\omega))\le\lambda_1(\omega)\le \frac{b^2}{b'^2}\lambda_1(F(\omega))\,.
\end{equation}
Let us now consider a minimal partition $\mathcal{D}'=(D_i')_{1\le i \le k}$ of $\Tor(1,b')\,$. We define the partition $F(\mathcal{D}')=(F(D_i'))_{1\le i \le k}\,$ of $\Tor(1,b)\,$. According to the inequality on the left in \eqref{eqIneqLambda1}, we have
\begin{equation*}
\mathfrak{L}_k(\Tor(1,b)) \le \Lambda_k(F(\mathcal{D}')) \le \Lambda_k(\mathcal{D}') = \mathfrak{L}_k(\Tor(1,b'))\,.
\end{equation*}
Since $b$ and $b'$ are arbitrary, this establishes monotonicity. Let us on the other hand consider a minimal partition $\mathcal{D}=(D_i)_{1\le i \le k}$ of $\Tor(1,b)\,$, and define the partition $F^{-1}(\mathcal{D})=(F^{-1}(D_i))_{1\le i \le k}$ of $\Tor(1,b')\,$. According to the inequality on the right in \eqref{eqIneqLambda1}, we have
\begin{equation*}
\mathfrak{L}_k(\Tor(1,b')) \le \Lambda_k(F^{-1}(\mathcal{D})) \le \frac{b^2}{b'^2}\Lambda_k(\mathcal{D}) = \frac{b^2}{b'^2}\mathfrak{L}_k(\Tor(1,b))\,.
\end{equation*} 
Since $b$ and $b'$ are arbitrary, this establishes continuity.
\end{proof}

\subsection{Proof of Theorem \ref{thmbkOdd}}
\label{subsecProofOdd}

We break down the proof into several lemmas. The structure of the argument follows closely \cite{HelHof14}, with two main changes. First, we have imposed the constraint $|\omega| \le b$ in the auxiliary optimization problem \eqref{eq.optimibkFK}, in addition to the inclusion constraint $\omega \subset \Str_b\,$. This allows us to exclude the existence of domains homeomorphic to a disk for a larger range of values of $b\,$, as discussed in Lemmas \ref{lembFK} and \ref{lemNoDisk}. We have also used the pair-symmetric structure of the lifted partition to obtain a better lower bound of the energy, as seen in Lemmas \ref{lemCourantSym} and \ref{lemLower}.

The definition of the optimization problem \eqref{eq.optimibkFK} implies that $b\mapsto j(b)$ is non-increasing with respect to $b$\,. According to the Faber-Krahn inequality, the disk is a minimizer for sufficiently large $b\,$. For a small $b\,$, the following lower bound, deduced from the Poincar{\'e} inequality on $\Str_b\,$, gives more information (see \cite{HelHof14}):
\[j(b)\ge \frac{\pi^2}{b^2}\,.\]
It can be shown, using a concentration-compactness result for shapes proved by D. Bucur (see \cite{Buc00}), that there exists a quasi-open minimizer $\Omega^*$ for Problem \eqref{eq.optimibkFK}, which satisfies $|\Omega^*|=b\,$. This allows us to obtain an estimate of $\bS_k\,$. 
\begin{lem} 
\label{lembFK}
We have
\[\frac{1}{k} <\bS_k <\frac{1}{\sqrt{k^2-1}}\,.\]
\end{lem}

\begin{proof} 
An elementary argument using monotonicity, similar to the proof of Proposition \ref {propMonotonicity}, shows that the function $b\mapsto j(b)$ is continuous. Furthermore, in the case $b=1/k\,$, we have seen that there exists a quasi-open set $\Omega^*\subset \Str_b$ such that $\lambda_1(\Omega^*)=j(1/k)$ and $|\Omega^*|=1/k\,$. This condition on the area implies in particular that $j(1/k)=\lambda_1(\Omega^*)>k^2\pi^2\,$, by strict monotonicity of the first Dirichlet-Laplacian eigenvalue. We conclude that $\bS_k>1/k\,$. \\
To obtain the upper bound, let us consider the rectangle
\[\Rec(1,b)=\left(0,1\right)\times \left(0,b\right)\,.\]
We have $\Rec(1,b)\subset \Str_b$ and $|\Rec(1,b)|=b\,$. If $b=1/\sqrt{k^2-1}\,$, we have $\lambda_1(\Rec(1,b))=k^2\pi^2\,$. Furthermore, we know that $\Rec(1,b)$ is not minimal, since the normal derivative of a first eigenfunction is not constant on the free boundary. We conclude that $\bS_k<1/\sqrt{k^2-1}\,$.
\end{proof}

We now prove a topological property of minimal partitions.
\begin{lem} 
\label{lemNoDisk}
If $b<\bS_k\,$, a minimal partition of $\Tor(1,b)$ has no domain homeomorphic to a disk.
\end{lem}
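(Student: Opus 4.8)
The plan is to argue by contradiction: suppose $\mathcal{D}=(D_i)_{1\le i\le k}$ is a minimal $k$-partition of $\Tor(1,b)$ with $b<\bS_k$, and suppose one of its domains, say $D_{i_0}$, is homeomorphic to a disk. By Theorem \ref{thmMinPartReg}, $\mathcal{D}$ is equispectral, so $\lambda_1(D_{i_0})=\mathfrak{L}_k(\Tor(1,b))\le k^2\pi^2$, the last inequality coming from the strip partition $\mathcal{D}_k(1,b)$ since $\Lambda_k(\mathcal{D}_k(1,b))=k^2\pi^2$. The strategy is to lift $D_{i_0}$ to the universal cover $\RR\times(\RR/b\ZZ)$, or more simply to show that a disk-like domain can be realized (after unfolding the periodic directions) as an open set $\Omega$ sitting inside the infinite strip $\Str_b=\RR\times(0,b)$, with $|\Omega|=|D_{i_0}|$ and $\lambda_1(\Omega)=\lambda_1(D_{i_0})$. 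This would contradict the definition \eqref{eq.optimibkFK} of $\bS_k$ as soon as we verify the area constraint $|\Omega|\le b$ and the eigenvalue lower bound $j(b)>k^2\pi^2$ available for $b<\bS_k$.

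First I would make the unfolding precise. Since $D_{i_0}$ is homeomorphic to a disk, it is in particular simply connected, so neither of the two generating loops of $\pi_1(\Tor(1,b))$ can be a nontrivial loop inside $\overline{D_{i_0}}$; consequently $D_{i_0}$ lifts isometrically to a domain $\widetilde{D}_{i_0}$ in the universal cover $\RR^2/(b\ZZ\text{-action in }y)$, and in fact it lifts to an open set contained in a fundamental domain of the form $\RR\times(0,b)=\Str_b$ after choosing coordinates appropriately (the $y$-extent of a disk-type domain cannot wrap all the way around the torus without creating a topologically nontrivial loop, and the first coordinate is already unwrapped to all of $\RR$). This gives an isometric copy $\Omega\subset\Str_b$ with $\lambda_1(\Omega)=\lambda_1(D_{i_0})\le k^2\pi^2$.

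Next I would control the area. The key quantitative input is the area constraint built into \eqref{eq.optimibkFK}: I need $|\Omega|=|D_{i_0}|\le b$. Since $\mathcal{D}$ is a strong partition (Theorem \ref{thmMinPartReg}), its $k$ domains tile $\Tor(1,b)$ up to a null set, so $\sum_{i=1}^k|D_i|=|\Tor(1,b)|=b$, which only gives an \emph{average} area $b/k$; a single domain could in principle be larger. This is where the eigenvalue bound does the work: for $b<\bS_k$ we have $j(b)>k^2\pi^2$ by definition of $\bS_k$, and the Poincar\'e/Faber-Krahn circle of ideas (specifically the bound $j(b)\ge \pi^2/b^2$ recalled before Lemma \ref{lembFK}) forces any $\Omega\subset\Str_b$ with small first eigenvalue to have area close to $b$. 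Concretely, if $|\Omega|\le b$ then $\lambda_1(\Omega)\ge j(b)>k^2\pi^2$, contradicting $\lambda_1(\Omega)\le k^2\pi^2$; so the contradiction is reached precisely once the area constraint $|\Omega|\le b$ is in force. I would therefore verify that a disk-like domain, being simply connected and fitting in the strip, automatically satisfies $|\Omega|\le b$, or alternatively restrict the lift to a translate lying in $\Str_b$ so that the inclusion $\Omega\subset\Str_b$ together with $|\Omega|\le|\Str_b\cap(\text{fundamental domain})|=b$ holds.

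The main obstacle I anticipate is the geometric lifting step: justifying rigorously that a domain homeomorphic to a disk on $\Tor(1,b)$ can be placed isometrically inside $\Str_b$ \emph{with area at most $b$}. The subtlety is that "homeomorphic to a disk" is a purely topological condition, and I must convert it into the two quantitative facts used above — that the domain unwraps without self-overlap into the strip (so that $\lambda_1$ and the area are preserved under the lift) and that its lifted area does not exceed $b$. The topological heart is that a simply connected domain carries no noncontractible loop, hence its preimage under the covering map $\RR\times(0,b)\to\Tor(1,b)$ splits into disjoint isometric copies, exactly one of which we retain; the area bound then follows because that single copy injects into one fundamental domain of area $b$. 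Once this is set up, the analytic contradiction via $j(b)>k^2\pi^2$ is immediate.
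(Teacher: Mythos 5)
Your overall contradiction scheme (produce a set $\Omega\subset\Str_b$ with $|\Omega|\le b$ and $\lambda_1(\Omega)\le k^2\pi^2$, then contradict $j(b)>k^2\pi^2$ for $b<\bS_k$) is exactly the paper's, but the step you yourself flag as ``the main obstacle'' is a genuine gap, and the way you propose to close it is false. A domain of $\Tor(1,b)$ homeomorphic to a disk need \emph{not} unfold isometrically into the strip $\Str_b$: simple connectedness does not prevent the domain from winding around the torus in the $y$-direction. A thin tubular neighborhood of an embedded spiral $t\mapsto(t\bmod 1,\,\alpha t\bmod b)$, $t\in[0,T]$, is homeomorphic to a disk, yet its lift to the universal cover $\RR^2$ is a thin neighborhood of a long slanted segment, which meets every horizontal line; no vertical translate of it is contained in any horizontal strip of width $b$. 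So your claim that ``the $y$-extent of a disk-type domain cannot wrap all the way around the torus without creating a topologically nontrivial loop'' is wrong, and the isometric placement $\Omega\subset\Str_b$ on which your argument rests does not exist in general. (A side error: $\RR\times(0,b)\to\Tor(1,b)$ is not a covering map of the torus; the relevant covering is $\Pi_{\infty}:\RR^2\to\Tor(1,b)$.)

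The paper closes this gap with Steiner symmetrization, which is the one idea missing from your proposal. Lift $D$ to a connected component $D_0$ of $\Pi_{\infty}^{-1}(D)\subset\RR^2$; since $D$ is simply connected, $\Pi_{\infty}$ maps $D_0$ bijectively (and locally isometrically) onto $D$, so $|D_0|=|D|\le b$ and $\lambda_1(D_0)=\lambda_1(D)$. This injectivity also forces every vertical slice $\{y\,;\,(x_0,y)\in D_0\}$ to have one-dimensional measure at most $b$ (it injects into a circle of circumference $b$), even though $D_0$ itself may be arbitrarily tall. The Steiner symmetrization $D_0^S$ of $D_0$ with respect to a horizontal line therefore lies in a translate of $\Str_b$, has the same area, and satisfies $\lambda_1(D_0^S)\le\lambda_1(D_0)$ by P\'olya--Szeg\H{o}. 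This yields $j(b)\le\lambda_1(D)=\Lambda_k(\mathcal{D})$ and the desired contradiction with $j(b)>k^2\pi^2=\Lambda_k(\mathcal{D}_k(1,b))$. Your discussion of the area constraint, by contrast, is correct once stated plainly ($|D|\le|\Tor(1,b)|=b$ is immediate and is preserved by the lift), so the symmetrization step is the only, but essential, repair your argument needs.
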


\begin{proof} 
Let us assume that some minimal partition $\mathcal{D}$ has one such domain, that we denote by $D$. We consider the following covering of $\Tor(1,b)$ by the plane $\RR^2\,$:
	\[\begin{array}{cccc}
			\Pi_{\infty}:& \RR^2 & \rightarrow & \Tor(1,b)\\
					&(x,y)& \mapsto    & (x \mbox{ mod } 1, y \mbox{ mod } b)\,.\\
	  \end{array}
	  \]
Let $D_0$ be one of the connected components of $\Pi_{\infty}^{-1}(D)\,$.  It is homeomorphic to a disk, and $|D_0|= |D|\le b\,$. Furthermore, since $\Tor(1,b)$ is of width $b\,$, for any $x_0 \in \RR\,$, the total length of the vertical slice at $x_0\,$, that is to say of the set $\{y\,;\, (x_0,y)\in D_0\}\subset \RR\,$, is smaller than $b\,$. Let us call $D_0^{S}$ the  Steiner symmetrization of $D_0$ with respect to the line $\{(x,y)\,;\,y=1/2\}\,$. It has the same area as $D_0\,$, and it is contained in $\Str_b\,$, according to the geometrical property mentioned above. Since Steiner symmetrization does not increase the first eigenvalue, we obtain
\[j(b)\le \lambda_1(D_0^S)\le \lambda_1(D_0)=\lambda_1(D)=\Lambda_k(\mathcal{D})\,.\]
Since $b<\bS_k\,$, $j(b)>k^2\pi^2=\Lambda_k\left(\mathcal{D}_k(1,b)\right)\,$, contradicting the minimality of $\mathcal{D}\,$.  
\end{proof}	  

 Following \cite{HelHof14}, we consider the torus 
 \begin{equation*}					
   \Tor(2,2b)=\left(\RR/2\ZZ\right)\times\left(\RR/2b\ZZ\right)
 \end{equation*}
 equipped with the natural projection map $\Pi_4: (x,y)\mapsto (x \mbox{ mod }1,y\mbox{ mod }b)$ from $\Tor(2,2b)$ to $\Tor(1,b)\,$. Since every point of $\Tor(1,b)$ has four antecedents by $\Pi_4\,$, $\Tor(2,2b)$ can be seen as a four-sheeted covering of $\Tor(1,b)\,$. The pull-back $\Pi_4^{-1}(D)$ of a connected open set $D$ in $\Tor(1,b)$ is an open set in $\Tor(2,2b)$ having at most four connected components. With any $k$-partition $\mathcal{D}=(D_i)_{1\le i\le k}\,$ of $\Tor(1,b)$,  we can therefore associate the partition whose domains are all the connected components of all the sets $\Pi_4^{-1}(D_i)\,$, for $i\in \{1,\,\dots,\,k\}\,$. We call it the partition \emph{lifted from $\mathcal{D}$} and denote it by $\Pi_4^{-1}(\mathcal{D})\,$. It is a regular $\ell$-partition of $\Tor(2,2b)\,$, with $k\le\ell\le 4k\,$, and it has the same energy as $\mathcal{D}\,$.
 
 Let us now define the following mapping on $\Tor(2,2b)\,$: 
 \[\begin{array}{cccc}
 \sigma: & \Tor(2,2b) & \to     & \Tor(2,2b)\\
 & (x,y)   & \mapsto & (x+1 \mod 2,y)\,.\\
 \end{array}\]
 We have $\Pi_4(\sigma((x,y)))=\Pi_4((x,y))$ for all $(x,y) \in \Tor(2,2b)\,$. We say that $u\in L^2(\Tor(2,2b))$ is \emph{antisymmetric} if $u\circ \sigma=-u$ and we denote by $\mathcal{A}_{\sigma}$ the space of antisymmetric functions. 
 
 Let us note that for any function $u$ on $\Tor(2,2b)\,$, $-\Delta (u\circ \sigma)= \left(-\Delta u\right)\circ \sigma\,$, so that $\mathcal{A}_{\sigma}$ is stable under the action of $-\Delta\,$. We write  $H^{a}_{\sigma}$ for the Friedrichs extension of the differential operator $-\Delta$ acting on  $C^{\infty}(\Tor(2,2b))\cap \mathcal{A}_{\sigma}\,$. The operator $H^{a}_{\sigma}$ is self-adjoint, with domain $H^2(\Tor(2,2b))\cap \mathcal{A}_{\sigma}\,$, and has compact resolvent. We denote by $(\lambda_k^{\sigma,a})_{k \ge 1}$ the sequence of its eigenvalues, which we will call antisymmetric, arranged in non-decreasing order and counted with multiplicities.
 
 Following \cite{HelHofTer10a}, we consider partitions of a specific type. For any positive integer $\ell\,$, we say that a $\ell$-partition $\mathcal{D}=(D_i)_{1\le i\le \ell}$ of $\Tor(2,2b)$ is \emph{pair-symmetric} if, for any $i\in\{1,\dots,\ell\}\,$, $\sigma(D_i)=D_j$ with $j \neq i\,$. Let us note that a pair-symmetric partition has an even number of domains, and that the nodal domains of an antisymmetric eigenfunction form a pair-symmetric partition.
 
 \begin{lem} 
 	\label{lemCourantSym}
 	Let  $\mathcal{D}$ be a pair-symmetric and equispectral $2k$-partition of $\Tor(2,2b)\,$. We have
 	\[\lambda_{k}^{\sigma,a}\le\Lambda_{2k}(\mathcal{D})\,.\]
 \end{lem}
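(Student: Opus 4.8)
The plan is to build a $k$-dimensional subspace of the form domain of $H^a_\sigma$ on which the Rayleigh quotient is controlled by $\Lambda_{2k}(\mathcal{D})$, and then to invoke the min-max (Courant--Fischer) characterization of $\lambda_k^{\sigma,a}$. Since $\mathcal{D}$ is pair-symmetric, its $2k$ domains split into exactly $k$ orbits of size two under the involution $\sigma$, each orbit being a pair $\{D_j,\sigma(D_j)\}$ with $\sigma(D_j)\neq D_j$. I would select one representative $D_j$ from each orbit, for $j=1,\dots,k$.

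First I would produce, on each pair, an antisymmetric trial function. Let $\varphi_j$ denote the first Dirichlet eigenfunction on $D_j$, extended by zero outside $D_j$; since $\sigma$ is an isometry and $\sigma^2=\mathrm{id}$, the function $\varphi_j\circ\sigma$ is supported on $\sigma(D_j)$ and restricts there to the first eigenfunction with the same eigenvalue. I set $\psi_j=\varphi_j-\varphi_j\circ\sigma$, which is supported on $D_j\cup\sigma(D_j)$, lies in $H^1(\Tor(2,2b))$ as an extension by zero of functions in $H^1_0$, and satisfies $\psi_j\circ\sigma=-\psi_j$ by $\sigma^2=\mathrm{id}$; hence $\psi_j$ belongs to the form domain $H^1(\Tor(2,2b))\cap\mathcal{A}_\sigma$.

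Next I would compute the Rayleigh quotient. The functions $\psi_1,\dots,\psi_k$ have pairwise disjoint supports, so they are linearly independent and span a $k$-dimensional subspace $V\subset\mathcal{A}_\sigma$. For $v=\sum_j c_j\psi_j$, the disjointness of supports makes both the Dirichlet integral and the $L^2$ norm split as sums over $j$; using that $\sigma$ preserves gradient norms and that $\varphi_j$ is a first eigenfunction, each term contributes a Rayleigh quotient equal to $\lambda_1(D_j)$. Equispectrality gives $\lambda_1(D_j)=\Lambda_{2k}(\mathcal{D})$ for every $j$, so the Rayleigh quotient of every nonzero $v\in V$ equals $\Lambda_{2k}(\mathcal{D})$.

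Finally, the Courant--Fischer principle applied to the self-adjoint operator $H^a_\sigma$ yields
\[
\lambda_k^{\sigma,a}\le \max_{v\in V\setminus\{0\}}\frac{\int_{\Tor(2,2b)}|\nabla v|^2}{\int_{\Tor(2,2b)}|v|^2}=\Lambda_{2k}(\mathcal{D}),
\]
which is the claim. I expect the only delicate point to be the admissibility of the trial functions---verifying that the antisymmetric extension-by-zero construction genuinely lands in the form domain of $H^a_\sigma$ and that the min-max is taken over this antisymmetric subspace rather than over the full space $H^1(\Tor(2,2b))$; the spectral computation itself is routine once the supports are disjoint.
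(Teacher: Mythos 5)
Your proof is correct and follows exactly the route the paper has in mind: the paper's own ``proof'' is just a sketch (min--max for $H^a_\sigma$ with antisymmetric test functions, deferring details to \cite[Proposition 6.3]{HelHofTer10a}), and your construction of the disjointly supported antisymmetric trial functions $\psi_j=\varphi_j-\varphi_j\circ\sigma$ on representatives of the $\sigma$-orbits is precisely the standard way to fill in that sketch. The delicate point you flag---that the antisymmetric extensions by zero lie in the form domain of the Friedrichs extension $H^a_\sigma$, i.e.\ in the closure of $C^\infty(\Tor(2,2b))\cap\mathcal{A}_\sigma$ in $H^1$---is indeed the only technical detail to verify, and it holds by $\sigma$-equivariant mollification.
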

 
 The proof is an application of the min-max characterization of eigenvalues, with test functions taken in the space $\mathcal{A}_{\sigma}\,$, as allowed by the definition of pair-symmetric partitions. We do not give the details here, and we refer instead the reader to \cite[Proposition 6.3]{HelHofTer10a}, where a similar result is discussed in the context of a double covering of the sphere. We now apply Lemma \ref{lemCourantSym} to get a lower bound on the energy of a partition.
 
 \begin{lem}
 	\label{lemLower}
 	For $b\le 1/{\sqrt{k^2-1}}\,$, if $\mathcal{D}$ is a $k$-partition of $\Tor(1,b)$ with no domain homeomorphic to a disk, then we have $k^2\pi^2\le\Lambda_k(\mathcal{D})\,$.
\end{lem}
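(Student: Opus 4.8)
The plan is to lift the partition $\mathcal{D}$ to the four-sheeted covering $\Tor(2,2b)$ and exploit the pair-symmetric structure together with Lemma~\ref{lemCourantSym}. First I would consider the lifted partition $\Pi_4^{-1}(\mathcal{D})$. The key structural observation is that because no domain $D_i$ of $\mathcal{D}$ is homeomorphic to a disk, each $D_i$ must be topologically non-trivial on the torus $\Tor(1,b)$ (it wraps around in the $x$-direction, the $y$-direction, or both). I would argue that this forces each connected component of $\Pi_4^{-1}(D_i)$ to be exchanged with a distinct component by the translation $\sigma$, so that $\Pi_4^{-1}(\mathcal{D})$ is \emph{pair-symmetric}. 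The idea is that a disk-free domain cannot lift to a component that is invariant under $\sigma$ (such invariance would require the component to close up on itself after the unit horizontal shift in a way incompatible with being a non-disk lift), hence $\sigma$ must map each component to a genuinely different one.

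Next I would count the number of domains in the lifted partition. Since $\Pi_4$ is a four-fold covering and the partition is pair-symmetric, I expect the lifted partition $\Pi_4^{-1}(\mathcal{D})$ to be a $2k$-partition of $\Tor(2,2b)$; the disk-free hypothesis should be what pins down the number of components of each $\Pi_4^{-1}(D_i)$ to be exactly two rather than one or four. I would also recall from the discussion preceding the lemma that the lifted partition is regular, has the same energy as $\mathcal{D}$, namely $\Lambda_{2k}(\Pi_4^{-1}(\mathcal{D}))=\Lambda_k(\mathcal{D})$, and—using Theorem~\ref{thmMinPartReg} applied to the minimal partition, or a direct argument—is equispectral. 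With these properties in hand, Lemma~\ref{lemCourantSym} applies directly and yields
\[
\lambda_k^{\sigma,a}\le \Lambda_{2k}(\Pi_4^{-1}(\mathcal{D}))=\Lambda_k(\mathcal{D})\,.
\]

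It then remains to bound the antisymmetric eigenvalue $\lambda_k^{\sigma,a}$ from below by $k^2\pi^2$ under the hypothesis $b\le 1/\sqrt{k^2-1}$. I would identify the spectrum of $H^a_\sigma$ explicitly: antisymmetric eigenfunctions on $\Tor(2,2b)$ are spanned by those separated-variable modes $e^{i\pi m x}e^{i\pi n y/b}$ that change sign under $x\mapsto x+1$, i.e. those with $m$ odd. The corresponding eigenvalues are $\pi^2(m^2+n^2/b^2)$ with $m$ odd, and I would order them to locate $\lambda_k^{\sigma,a}$. The constraint $b\le 1/\sqrt{k^2-1}$ should guarantee that the $y$-modes are expensive enough that the $k$-th antisymmetric eigenvalue is at least $k^2\pi^2$, achieved by the purely horizontal odd modes $m=1,3,\dots,2k-1$ with $n=0$. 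Combining this lower bound with the displayed inequality gives $k^2\pi^2\le \Lambda_k(\mathcal{D})$, completing the proof.

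The main obstacle I anticipate is the topological/combinatorial step establishing that the lifted partition is pair-symmetric with exactly $2k$ domains. Showing rigorously that the disk-free hypothesis forces $\sigma$ to act without fixed components—and that each $\Pi_4^{-1}(D_i)$ has precisely two components—requires a careful analysis of how a non-simply-connected domain on $\Tor(1,b)$ lifts under the covering $\Pi_4$, distinguishing the possible homotopy types (wrapping horizontally, vertically, or both). The eigenvalue counting at the end is a routine but attention-demanding bookkeeping of the odd-$m$ modes against the threshold $k^2\pi^2$.
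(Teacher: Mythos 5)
Your strategy is the same as the paper's (lift to $\Tor(2,2b)$, establish pair-symmetry, apply Lemma~\ref{lemCourantSym}, then bound $\lambda_k^{\sigma,a}$ below), but two of your steps contain genuine errors.

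First, the spectral bookkeeping at the end is wrong, and the error hides a hypothesis that is essential. The antisymmetric modes with $n=0$ are indeed those with $m$ odd, but each eigenvalue $\pi^2m^2$ has multiplicity \emph{two} (both $\cos(\pi m x)$ and $\sin(\pi m x)$, i.e.\ $m$ and $-m$), so the low antisymmetric spectrum is $\pi^2,\pi^2,9\pi^2,9\pi^2,\dots$, not the list $m=1,3,\dots,2k-1$ you describe. With the correct count, and using $b\le 1/\sqrt{k^2-1}$ to push every mode with $n\neq 0$ up to at least $\pi^2(1+1/b^2)\ge k^2\pi^2$, the number of antisymmetric eigenvalues strictly below $k^2\pi^2$ is twice the number of odd integers $m<k$: this equals $k-1$ when $k$ is odd (so $\lambda_k^{\sigma,a}=k^2\pi^2$, as the paper states), but it equals $k$ when $k$ is even, in which case $\lambda_k^{\sigma,a}=(k-1)^2\pi^2<k^2\pi^2$ and the method gives nothing. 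The parity of $k$ never enters your argument, yet it is indispensable; the paper's proof explicitly uses ``the facts that $k$ is odd and that $b\le 1/\sqrt{k^2-1}$''. As written, your counting would ``prove'' the lemma by this route for even $k$ as well, which is impossible.

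Second, your justification of pair-symmetry is false as stated. It is not true that a non-disk domain cannot lift to a $\sigma$-invariant component: take a domain wrapping only horizontally, e.g.\ the strip $D=(\RR/\ZZ)\times(0,h)$ in $\Tor(1,b)$. Then $\Pi_4^{-1}(D)$ has two components, $(\RR/2\ZZ)\times(0,h)$ and $(\RR/2\ZZ)\times(b,b+h)$, and each is invariant under $\sigma$; they are exchanged by the \emph{vertical} deck transformation $(x,y)\mapsto(x,y+b)$, not by $\sigma$. The actual topological analysis (carried out in \cite{HelHof14}, to which the paper defers) shows that the no-disk hypothesis forces all domains to be annuli whose core curves lie in a single primitive homotopy class $(p,q)$, and the two components of $\Pi_4^{-1}(D_i)$ are exchanged by $\sigma$ precisely when $q$ is odd; the case $q$ even (which includes the horizontal strips above) requires a separate argument, for instance using the vertical involution and its antisymmetric spectrum, or a direct estimate. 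So the ``careful analysis distinguishing homotopy types'' you anticipate would not confirm your claim---it would run into this counterexample, and the proof plan needs to be supplemented there. A final minor point: since the lemma's $\mathcal{D}$ is not assumed minimal, you cannot obtain equispectrality of the lifted partition from Theorem~\ref{thmMinPartReg}; fortunately the min-max argument behind Lemma~\ref{lemCourantSym} uses disjointly supported test functions and needs only pair-symmetry, so this hypothesis (which the paper's own proof also asserts without justification) is harmless.
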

 
\begin{proof} 
A topological analysis of $N(\mathcal{D})\,$, using the hypothesis that no domain is homeomorphic to a disk, shows that $\Pi_4^{-1}(D_i)$ has two  distinct connected components for each $i\in\{1,\,,\dots,\,k\}\,$, which are exchanged by the map $\sigma\,$. We refer the reader to \cite{HelHof14} for the details. Consequently, $\Pi_4^{-1}(\mathcal{D})$ is a pair-symmetric  and equispectral $2k$-partition. According to Lemma \ref{lemCourantSym}, we obtain 
\[\lambda_k^{\sigma,a}\le\Lambda_{2k}(\Pi_4^{-1}(\mathcal{D}))=\Lambda_{k}(\mathcal{D})\,.\] 
A direct computation, using the facts that $k$ is odd and that $b\le 1/\sqrt{k^2-1}\,$, shows that $\lambda_k^{\sigma,a}=k^2\pi^2\,$. 
\end{proof}
 
We can now complete the proof of Theorem \ref{thmbkOdd}. Let $ b<\bS_k$ and let $\mathcal{D}$ be a minimal $k$-partition of $\Tor(1,b)\,$. According to Lemma \ref{lemNoDisk}, no domain of $\mathcal{D}$ is homeomorphic to a disk. From Lemmas \ref{lembFK} and \ref{lemLower}, we obtain
\[\Lambda_{k}(\mathcal{D}_k(1,b))=k^2\pi^2\le \Lambda_k(\mathcal{D})=\mathfrak{L}_k\left(\Tor(1,b)\right).\]
This implies that $\mathcal{D}_k(1,b)$ is minimal. 

\subsection{Conjectures on the transition values}
\label{subsecTransVal}

Instead of the four-sheeted covering $\Pi_4: \Tor(2,2b) \rightarrow \Tor(1,b)\,$, we now consider the two-sheeted covering $\Pi_2: \Tor(2,b) \rightarrow \Tor(1,b)\,$, equipped with the map $\sigma: (x,y)\mapsto (x+1\mbox{ mod }2,y)\,$. In the same manner as before, we can consider for this covering lifted partitions, antisymmetric functions, antisymmetric eigenvalues, and pair-symmetric partitions. In the rest of this section, these terms will be understood with respect to the covering $\Pi_2: \Tor(2,b) \rightarrow \Tor(1,b)\,$. Lemma \ref{lemCourantSym} also  holds in that case. We have the following conditional result.

\begin{prop}
\label{propbkLowerCond} 
If $\mathcal{D}$ is a $k$-partition of $\Tor(1,b)$ such that $\Pi_2^{-1}(\mathcal{D})$ is a $2k$-partition of $\Tor(2,b)$, then 
\[\lambda_k^{\sigma,a}\le \Lambda_{2k}(\Pi^{-1}(\mathcal{D}))=\Lambda_{k}(\mathcal{D})\,.\]
\end{prop}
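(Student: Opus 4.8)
The plan is to prove Proposition~\ref{propbkLowerCond} as a direct analogue of Lemma~\ref{lemLower}, exploiting the fact that Lemma~\ref{lemCourantSym} is asserted to hold verbatim for the two-sheeted covering $\Pi_2\colon \Tor(2,b)\to\Tor(1,b)$. The hypothesis here is simpler than in the four-sheeted case: rather than deducing from a no-disk topological argument that each $\Pi_4^{-1}(D_i)$ splits into exactly two $\sigma$-exchanged components, we are \emph{given} outright that $\Pi_2^{-1}(\mathcal{D})$ is a $2k$-partition. So the only thing to check is that this hypothesis already forces the lifted partition to be pair-symmetric and equispectral, after which Lemma~\ref{lemCourantSym} applies mechanically.

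First I would establish the energy identity $\Lambda_{2k}(\Pi_2^{-1}(\mathcal{D}))=\Lambda_k(\mathcal{D})$, which is the rightmost equality in the claimed display. Since $\Pi_2$ is a local isometry, each connected component of $\Pi_2^{-1}(D_i)$ is isometric to $D_i$ (the covering is unramified and preserves the flat metric), so every lifted domain has first Dirichlet eigenvalue equal to $\lambda_1(D_i)$. Taking the maximum over all lifted domains therefore reproduces $\max_i \lambda_1(D_i)=\Lambda_k(\mathcal{D})$; this also shows $\Pi_2^{-1}(\mathcal{D})$ inherits equispectrality from $\mathcal{D}$ whenever $\mathcal{D}$ is equispectral, but in fact for the inequality we only need the energy identity. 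Next I would verify pair-symmetry. The assumption that $\Pi_2^{-1}(\mathcal{D})$ is a $2k$-partition means each $\Pi_2^{-1}(D_i)$ has exactly two connected components (since each point has two antecedents and there are $k$ domains totalling $2k$ pieces). Because $\sigma$ is a deck transformation of the covering, it permutes the connected components of $\Pi_2^{-1}(D_i)$; as $\sigma$ is a fixed-point-free involution with $\Pi_2\circ\sigma=\Pi_2$, it must exchange the two components rather than fix either, giving exactly the pair-symmetric structure $\sigma(D)=D'\neq D$ required by the definition.

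With pair-symmetry and the energy identity in hand, the final step is the application of Lemma~\ref{lemCourantSym} (in its two-sheeted-covering form, asserted to hold): for a pair-symmetric $2k$-partition one gets $\lambda_k^{\sigma,a}\le\Lambda_{2k}(\Pi_2^{-1}(\mathcal{D}))$, and combining with the energy identity yields
\[\lambda_k^{\sigma,a}\le\Lambda_{2k}(\Pi_2^{-1}(\mathcal{D}))=\Lambda_k(\mathcal{D})\,,\]
which is the statement. I do not expect a serious obstacle here, since this is a conditional result whose hypothesis removes the delicate topological analysis that was the crux of Lemma~\ref{lemLower}. The one point demanding care is checking that Lemma~\ref{lemCourantSym} genuinely transfers to $\Pi_2$: its proof rests on the min-max characterization with antisymmetric test functions supported on the domains of a pair-symmetric partition, and this mechanism is insensitive to whether the covering has two or four sheets, so the transfer is legitimate. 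The notation $\Lambda_{2k}(\Pi^{-1}(\mathcal{D}))$ in the display should be read as $\Lambda_{2k}(\Pi_2^{-1}(\mathcal{D}))$, matching the covering fixed at the start of the subsection.
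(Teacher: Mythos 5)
Your proposal is correct and follows essentially the same route as the paper's proof: the hypothesis that $\Pi_2^{-1}(\mathcal{D})$ is a $2k$-partition forces each $\Pi_2^{-1}(D_i)$ to have exactly two connected components exchanged by $\sigma$, hence the lift is pair-symmetric, and the two-sheeted version of Lemma~\ref{lemCourantSym} gives the inequality. Your extra remarks (the local-isometry energy identity and the observation that equispectrality is not really needed for the min-max inequality) only make explicit details the paper leaves implicit.
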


\begin{proof} Since we have assumed that $\Pi_2^{-1}(\mathcal{D})$ is a $2k$-partition, the pullback  $\Pi_2^{-1}(D_i)$ of a domain of $\mathcal{D}$ has  two connected components, and the map $\sigma$ exchanges them. This implies that $\Pi_2^{-1}(\mathcal{D})$ is a pair-symmetric partition, and the result follows from Lemma \ref{lemCourantSym}.
\end{proof}

A direct computation shows that if $k$ is odd and if $b\le 2/\sqrt{k^2-1}\,$, $\lambda_k^{\sigma,a}=k^2\pi^2\,$. If we were able to prove that a minimal $k$-partition of $\Tor(1,b)$ can be lifted to a $2k$-partition of $\Tor(2,b)$ when $b<2/\sqrt{k^2-1}\,$, we would obtain $b_k\ge 2/\sqrt{k^2-1}\,$. However, this is not obvious, even assuming that the boundary set of the partition does not contain any singular point (see \cite[Section 5]{HelHof14} for a discussion of this problem).  We have on the other hand the following result.
\begin{prop}
\label{propbkUpper} If $k\ge 3$ is odd, we have $b_k\le 2/{\sqrt{k^2-1}}\,$.
\end{prop}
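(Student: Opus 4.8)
The plan is to prove the contrapositive in a strong form: I will show that for \emph{every} admissible $b>2/\sqrt{k^2-1}$ the strip partition $\mathcal{D}_k(1,b)$ fails to be minimal, by exhibiting a competitor of strictly smaller energy. Since this places the set of $b$ for which $\mathcal{D}_k(1,b)$ is minimal inside $(0,2/\sqrt{k^2-1}]$, the bound $b_k\le 2/\sqrt{k^2-1}$ follows directly from the definition \eqref{eq.bk}. The whole construction lives on the two-sheeted cover $\Pi_2:\Tor(2,b)\to\Tor(1,b)$ with the involution $\sigma$, as set up in Section \ref{subsecTransVal}.

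Two antisymmetric eigenfunctions on $\Tor(2,b)$ drive the argument: $\phi(x,y)=\sin(k\pi x)$, with eigenvalue $\mu=k^2\pi^2$ (its $2k$ nodal strips are exactly the lift of $\mathcal{D}_k(1,b)$), and $\psi(x,y)=\sin(\pi x)\cos(2\pi y/b)$, with eigenvalue $\nu=\pi^2+4\pi^2/b^2=\lambda_{1,1}(2,b)$. Both are antisymmetric because $k$ and $1$ are odd, and the hypothesis $b>2/\sqrt{k^2-1}$ is precisely the eigenvalue crossing $\nu<\mu$ recorded in the text. The structural point I will exploit is that $\phi$ is independent of $y$ whereas $\psi$ has zero mean in $y$, so that over any full-height strip $S$ one has $\int_S \phi\,\psi=0$.

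For small $t>0$ I consider the antisymmetric trial function $u_t=\cos t\,\phi+\sin t\,\psi$. Since $\nabla\phi$ does not vanish on the nodal lines of $\phi$, for $t$ small the zero set of $u_t$ is a mild ``wavy'' deformation consisting of $2k$ disjoint curves winding around the $y$-circle, so $u_t$ has exactly $2k$ nodal domains, forming a pair-symmetric partition. On each nodal domain $D$, using $u_t\in H^1_0(D)$, the relations $-\Delta\phi=\mu\phi$, $-\Delta\psi=\nu\psi$, and $u_t=0$ on $\partial D$, the Rayleigh quotient evaluates cleanly:
\[
\frac{\int_D |\nabla u_t|^2}{\int_D u_t^2}=\mu+(\nu-\mu)\,\sin t\,\frac{\int_D \psi\,u_t}{\int_D u_t^2}\,.
\]
Because $D$ differs from a strip $S$ only by a set of measure $O(t)$ on which $\phi=O(t)$, the cross term expands as $\int_D \psi\,u_t = t\int_S \psi^2+O(t^2)$ with $\int_S\psi^2>0$; as $\nu-\mu<0$, every domain satisfies $\lambda_1(D)<\mu$ for $t$ small. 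Projecting the $2k$ nodal domains by $\Pi_2$, which is a local isometry and injective on each domain (each has $x$-width close to $1/k<1$), yields a $k$-partition of $\Tor(1,b)$ with energy $<k^2\pi^2=\Lambda_k(\mathcal{D}_k(1,b))$. Hence $\mathfrak{L}_k(\Tor(1,b))<k^2\pi^2$ and $\mathcal{D}_k(1,b)$ is not minimal.

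The main obstacle I anticipate is the book-keeping around the perturbed nodal set: verifying rigorously that $u_t$ has exactly $2k$ nodal domains (from the transversality of the zero set of $\phi$) and that the remainder in the Rayleigh estimate is $O(t^2)$ uniformly over the finitely many domains. The cancellation $\int_S\phi\,\psi=0$ is what keeps the leading term at order $t$ and avoids a delicate second-order expansion, so the argument really hinges on the choice of a second eigenfunction $\psi$ with nontrivial mean-zero dependence in $y$: this makes $\psi$ orthogonal to the $y$-independent $\phi$ on each strip while giving it the strictly smaller eigenvalue exactly when $b>2/\sqrt{k^2-1}$.
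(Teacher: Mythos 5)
Your proof is correct, and it takes a genuinely different route from the paper's. Both arguments work on the double cover $\Pi_2:\Tor(2,b)\to\Tor(1,b)$ and both ultimately rest on the same spectral fact, namely that for $b>2/\sqrt{k^2-1}$ the antisymmetric mode $\psi=\sin(\pi x)\cos(2\pi y/b)$ has eigenvalue $\nu=\pi^2+4\pi^2/b^2<k^2\pi^2=\mu$. But the paper proceeds by contradiction through Lemma \ref{lemASNodalMinPart}: if $\mathcal{D}_k(1,b)$ were minimal, its lift $\mathcal{D}_{2k}(2,b)$, which is nodal for the antisymmetric eigenfunction $\sin(k\pi x)$, would be minimal among pair-symmetric $2k$-partitions, forcing $k^2\pi^2=\lambda_k^{\sigma,a}$ and contradicting the computation $\lambda_k^{\sigma,a}<k^2\pi^2$. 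That lemma is an antisymmetric variant of ``nodal and minimal implies Courant-sharp'' whose proof is only outlined in the cited literature \cite{HelHofTer09,HelHofTer10a}, so the paper's argument is short but not self-contained. You bypass it altogether and make the eigenvalue crossing produce an explicit competitor: mixing $\psi$ into $\phi=\sin(k\pi x)$ bends the $2k$ nodal strips into wavy strips, and your Rayleigh-quotient identity, kept at leading order by the cancellation $\int_S\phi\,\psi=0$, shows each first Dirichlet eigenvalue drops strictly below $k^2\pi^2$ (at order $t^2$); projecting by $\Pi_2$ then yields a $k$-partition beating $\mathcal{D}_k(1,b)$ for every $b>2/\sqrt{k^2-1}$, and the bound on $b_k$ follows directly from the definition \eqref{eq.bk}. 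What each approach buys: the paper gets a two-line deduction at the price of an imported lemma; you get a self-contained, constructive proof whose competitor is consistent with the deformed partitions observed numerically in Section \ref{secNumerics}, at the price of the perturbation book-keeping (implicit function theorem control of the nodal set, uniform $O(t^2)$ remainders over the $2k$ domains), which you correctly flag and which is routine since there are finitely many domains and $\partial_x u_t$ is bounded away from zero near the unperturbed nodal lines. If you write it up in full, record two small points: injectivity of $\Pi_2$ on each nodal domain $D$ also follows from pair-symmetry ($\sigma(D)\neq D$ because $u_t\circ\sigma=-u_t$ while $u_t$ has constant sign on $D$), and distinct $\sigma$-pairs of nodal domains have disjoint projections (two points with the same image differ by $\sigma$), so the $2k$ images really do form $k$ mutually disjoint open connected sets in $\Tor(1,b)$.
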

To prove Proposition \ref{propbkUpper}, we use the following result, whose proof is outlined in \cite[Proposition 6.3]{HelHofTer10a} in the case of a double covering of the sphere. It consists in reproducing the arguments in the proof of \cite[Theorem 1.17]{HelHofTer09}, while preserving the antisymmetry. 
\begin{lem}
	\label{lemASNodalMinPart}
If $\mathcal{D}$ is a nodal $2k$-partition associated with an antisymmetric eigenvalue, and if  
$\mathcal{D}$ has minimal energy among pair-symmetric partitions, then		 $\Lambda_{2k}(\mathcal{D})=\lambda_k^{\sigma,a}$. 
\end{lem}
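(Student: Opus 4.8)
The plan is to establish the two inequalities $\lambda_k^{\sigma,a}\le\Lambda_{2k}(\mathcal{D})$ and $\Lambda_{2k}(\mathcal{D})\le\lambda_k^{\sigma,a}$ separately, following the strategy of \cite[Theorem 1.17]{HelHofTer09} but carrying the antisymmetry through every step. First I would record the two structural facts that feed both inequalities. Writing $\mathcal{D}$ as the nodal partition of an antisymmetric eigenfunction $u$ with eigenvalue $\lambda\,$, each domain $D_i$ carries the restriction $u|_{D_i}\,$, which has constant sign and is therefore a first Dirichlet eigenfunction of $D_i\,$; hence $\lambda_1(D_i)=\lambda$ for every $i\,$, so $\mathcal{D}$ is equispectral with $\Lambda_{2k}(\mathcal{D})=\lambda\,$. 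Moreover, since the nodal set of an antisymmetric function is $\sigma$-invariant, $\mathcal{D}$ is pair-symmetric. The lower bound is then immediate: $\mathcal{D}$ being a pair-symmetric and equispectral $2k$-partition, the two-sheeted version of Lemma \ref{lemCourantSym} gives $\lambda_k^{\sigma,a}\le\Lambda_{2k}(\mathcal{D})=\lambda\,$. (Concretely, the antisymmetric test functions $\psi_i=u|_{D_i\cup\sigma(D_i)}\in\mathcal{A}_{\sigma}$ attached to the $k$ orbits of domains have disjoint supports and Rayleigh quotient $\lambda\,$, so they span a $k$-dimensional subspace of $\mathcal{A}_{\sigma}$ on which the quotient is $\lambda\,$.)

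The reverse inequality $\lambda\le\lambda_k^{\sigma,a}$ is the genuinely antisymmetric analogue of the statement that a nodal minimal partition is Courant-sharp, and this is where the minimality hypothesis is essential: an antisymmetric eigenfunction with exactly $2k$ nodal domains need not satisfy $\lambda=\lambda_k^{\sigma,a}$ in general, since its eigenvalue may sit arbitrarily high in the antisymmetric spectrum (a Pleijel-type phenomenon), so the assumption that $\mathcal{D}$ minimizes the energy among pair-symmetric $2k$-partitions cannot be dropped. I would argue by contradiction: assume $\lambda>\lambda_k^{\sigma,a}\,$, let $\phi\in\mathcal{A}_{\sigma}$ be an eigenfunction for $\lambda_k^{\sigma,a}\,$, and note that its nodal partition is again pair-symmetric and equispectral, with energy $\lambda_k^{\sigma,a}<\lambda\,$. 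The goal is to manufacture from $\phi$ a competing pair-symmetric $2k$-partition whose energy is at most $\lambda_k^{\sigma,a}<\lambda\,$, contradicting the minimality of $\mathcal{D}\,$. When $\phi$ has more than $2k$ nodal domains, this is achieved by merging $\sigma$-orbits of domains in pairs, which can only decrease the maximal first eigenvalue by domain monotonicity; the delicate case is when $\phi$ has fewer than $2k$ nodal domains, which requires the local, $\sigma$-equivariant modification and deformation analysis of minimal partitions from \cite{HelHofTer09,HelHofTer10a} to produce a pair-symmetric competitor realised with exactly $2k$ domains and energy $\lambda_k^{\sigma,a}\,$.

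The main obstacle is precisely this last step: controlling the number of nodal domains of the low-eigenvalue competitor and carrying out the merging and splitting so as to land on exactly $2k$ domains while simultaneously (i) not increasing the maximal first eigenvalue beyond $\lambda_k^{\sigma,a}\,$, (ii) keeping the whole family equivariant under $\sigma$ so that the competitor remains pair-symmetric, and (iii) preserving the regularity and the equal-angle structure required of a partition. In other words, the entire Courant-sharp part of \cite[Theorem 1.17]{HelHofTer09} must be reproduced in the quotient geometry of the covering $\Pi_2:\Tor(2,b)\to\Tor(1,b)\,$, with every variational competitor and every surgery constrained to commute with $\sigma\,$. This compatibility with the antisymmetry is the only genuinely new point and the delicate heart of the argument; once it is in place, combining it with the lower bound of the first paragraph yields $\Lambda_{2k}(\mathcal{D})=\lambda_k^{\sigma,a}\,$.
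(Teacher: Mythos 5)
Your first paragraph is correct and is, in substance, the easy half of the lemma: writing $\mathcal{D}$ as the nodal partition of an antisymmetric eigenfunction $u$ with eigenvalue $\lambda$, each $u|_{D_i}$ is a first Dirichlet eigenfunction, so $\mathcal{D}$ is equispectral with $\Lambda_{2k}(\mathcal{D})=\lambda$, it is pair-symmetric because $\sigma$ maps $\{u>0\}$ onto $\{u<0\}$, and the disjointly supported antisymmetric test functions $u|_{D_i\cup\sigma(D_i)}\in\mathcal{A}_{\sigma}$ give $\lambda_k^{\sigma,a}\le\Lambda_{2k}(\mathcal{D})$; this is exactly Lemma \ref{lemCourantSym}, which the paper notes remains valid for the two-sheeted covering. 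You should also know that the paper itself proves nothing beyond this: its ``proof'' of the lemma is the sentence preceding it, namely that one reproduces the arguments of \cite[Theorem 1.17]{HelHofTer09} while preserving the antisymmetry, as outlined in \cite[Proposition 6.3]{HelHofTer10a}. So your final fallback (defer to those references) coincides with the paper's treatment.

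The genuine gap is in the mechanism you sketch for the reverse inequality, which is not the mechanism of the cited results and would fail if executed literally. You propose to contradict minimality by manufacturing, from an eigenfunction $\phi$ of $\lambda_k^{\sigma,a}$ \emph{alone}, a pair-symmetric $2k$-partition of energy at most $\lambda_k^{\sigma,a}$, merging orbits when $\mu(\phi)>2k$ and splitting when $\mu(\phi)<2k$. The merging case is harmless (one may simply drop whole $\sigma$-orbits; it is in fact vacuous by the antisymmetric Courant theorem of \cite{HelHofTer10a}), but the splitting case --- the only one that can occur when the conclusion is in doubt --- is blocked by monotonicity, not by mere delicacy: $\lambda_1$ is strictly monotone under domain inclusion, so every piece of a genuinely subdivided nodal domain of $\phi$ has first eigenvalue strictly larger than $\lambda_k^{\sigma,a}$, and there is no tool to keep the resulting energy below $\Lambda_{2k}(\mathcal{D})$. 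More structurally, your construction of the competitor never uses the hypotheses on $\mathcal{D}$ (existence, nodality, minimality); if it worked, it would prove unconditionally that the minimal energy among pair-symmetric $2k$-partitions always equals $\lambda_k^{\sigma,a}$, and the same scheme without the symmetry would prove $\mathfrak{L}_k(M)=\lambda_k(M)$ for every $M$ and $k$ --- which is false, and contradicted by this very paper (the remark after Theorem \ref{thmNodalMinimal} together with \cite{Len15Torus} gives $\mathfrak{L}_k(\Tor(1,1))>\lambda_k(\Tor(1,1))$ for $k\ge 3$). Whatever the details of \cite[Theorem 1.17]{HelHofTer09}, it must extract the contradiction from the given minimal nodal partition itself, not from a competitor built out of a low eigenfunction; so the only part of your second half that can be kept is its closing instruction --- reproduce that argument $\sigma$-equivariantly --- which is precisely the paper's citation-proof.
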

\begin{proofof}{Proposition \ref{propbkUpper}} We have that
$\mathcal{D}_{2k}(2,b)$ is the partition lifted from $\mathcal{D}_k(1,b)$, and  is also the nodal partition of the antisymmetric eigenfunction $(x,y)\mapsto \sin(k\pi x)\,$. Let us assume by contradiction that $b>2/\sqrt{k^2-1}$ and $\mathcal{D}_{k}(1,b)$ is minimal. This would imply that  $\mathcal{D}_{2k}(2,b)$ is minimal among pair-symmetric $2k$-partitions, and thus, according to Lemma \ref{lemASNodalMinPart},
that $k^2\pi^2=\Lambda_{2k}(\mathcal{D}_{2k}(2,b))=\lambda_k^{\sigma,a}\,$, whereas a direct computation shows that the condition $b>2/\sqrt{k^2-1}$ implies $\lambda_k^{\sigma,a}<k^2\pi^2\,$.
\end{proofof}
  
Propositions \ref{propbkLowerCond} and \ref{propbkUpper}, and the numerical results of Section \ref{secNumerics}, suggest the following conjecture.

\begin{conj}
\label{conjkOdd}
Let us denote $\bC_k={2}/{\sqrt{k^2-1}}\,$. 
If $k\ge 3$ is odd, we conjecture that $b_k=\bC_k\,$.
\end{conj}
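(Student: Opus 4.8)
The upper bound $b_k\le \bC_k$ is already established in Proposition~\ref{propbkUpper}, so the plan is to prove the matching lower bound $b_k\ge \bC_k$; by the definition \eqref{eq.bk} it suffices to show that $\mathcal{D}_k(1,b)$ is minimal for every $b<\bC_k$, since then the supremum defining $b_k$ is at least $\bC_k$. I would argue exactly as in Section~\ref{subsecProofOdd}, but using the two-sheeted covering $\Pi_2\colon \Tor(2,b)\to\Tor(1,b)$ of Section~\ref{subsecTransVal} in place of the four-sheeted $\Pi_4$. Fix $b<\bC_k$ and let $\mathcal{D}$ be a minimal $k$-partition of $\Tor(1,b)$. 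The whole argument reduces to one lifting property: \emph{if $\Pi_2^{-1}(D_i)$ has two connected components for every $i$}, then $\Pi_2^{-1}(\mathcal{D})$ is a pair-symmetric $2k$-partition, and Proposition~\ref{propbkLowerCond} together with the computation recorded before Proposition~\ref{propbkUpper} (valid because $b<\bC_k=2/\sqrt{k^2-1}$) gives the chain $k^2\pi^2=\lambda_k^{\sigma,a}\le\Lambda_k(\mathcal{D})=\mathfrak{L}_k(\Tor(1,b))\le\Lambda_k(\mathcal{D}_k(1,b))=k^2\pi^2$. This forces equality throughout, so $\mathcal{D}_k(1,b)$ is minimal. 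Everything therefore hinges on the lifting property.

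The second step is to recast the lifting property topologically. The covering $\Pi_2$ corresponds to the subgroup $2\ZZ\times\ZZ\le\ZZ^2=\pi_1(\Tor(1,b))$, so for a connected domain $D_i$ the number of components of $\Pi_2^{-1}(D_i)$ equals the number of orbits of the image of $\pi_1(D_i)$ acting on the two-element fibre $\ZZ^2/(2\ZZ\times\ZZ)\cong\ZZ/2\ZZ$. Hence $\Pi_2^{-1}(D_i)$ has two components — and $\sigma$ then automatically exchanges them, as $\sigma$ lies outside the stabiliser — precisely when every loop in $D_i$ has \emph{even horizontal winding number}, and one component precisely when $D_i$ carries a loop of odd horizontal winding. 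In particular disks and vertically wrapping domains are harmless here, in contrast with the four-sheeted setting of Lemmas~\ref{lemNoDisk} and \ref{lemLower} where disks had to be excluded; this is exactly the gain of working with $\Pi_2$. The sole remaining task is to exclude, for $b<\bC_k$, any minimal domain carrying a loop of odd horizontal winding.

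Such a bad domain $D$ contains a horizontally non-contractible loop, hence projects onto the whole $x$-circle: it is a \emph{horizontal band}. The natural attempt is to unroll the $x$-direction, producing a connected, $x$-periodic lift $\tilde D$ in the cylinder $\RR\times(\RR/b\ZZ)$ with $\lambda_1(D)=\lambda_1(\tilde D)$, and then to Steiner-symmetrize each vertical slice about $y=b/2$ to compare $D$ with a subdomain of a horizontal strip of height at most $b$. The essential difficulty visible already here is that a band cannot be excluded in isolation: since $\bC_k>1/k$, for $b\in(1/k,\bC_k)$ a band of height $h\in(1/k,b)$ has $\lambda_1=\pi^2/h^2\le k^2\pi^2$. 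One must therefore use the global constraint that the remaining $k-1$ domains tile the complementary horizontal cylinder of height $b-h$ and also satisfy $\lambda_1\le k^2\pi^2$ by equispectrality (Theorem~\ref{thmMinPartReg}). For the most symmetric such competitor, cutting that cylinder into $k-1$ Dirichlet rectangles of width $1/(k-1)$ gives energy $\pi^2\!\left((k-1)^2+(b-h)^{-2}\right)$, which is at most $k^2\pi^2$ only when $b-h\ge 1/\sqrt{2k-1}$; combined with $h\ge 1/k$ this needs $b\ge 1/k+1/\sqrt{2k-1}$, and one checks that $1/k+1/\sqrt{2k-1}>\bC_k$ for all odd $k\ge 3$. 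Thus this competitor is infeasible precisely in the conjectured range $b<\bC_k$, which is the quantitative evidence behind the conjecture.

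The hard part — and the reason the statement is offered only as a conjecture — is to upgrade this heuristic into an exclusion valid for \emph{all} admissible configurations. Unlike a disk, a horizontally wrapping minimal domain need not be an annulus or a cylinder: its boundary may contain singular points where several arcs meet at equal angles, so its vertical slices and its topology are not directly controlled and the clean symmetrization above does not apply verbatim. Equivalently, one must rule out the coexistence of an arbitrary horizontal band with any energy-$k^2\pi^2$ partition of its complement, for every $b<\bC_k$. Turning the two-length-scale obstruction into a rigorous argument, even under the simplifying assumption that $N(\mathcal{D})$ contains no singular point, is exactly the obstruction signalled after Proposition~\ref{propbkUpper} and discussed in \cite[Section~5]{HelHof14}, and it is the single step I expect to be genuinely out of reach with the present tools.
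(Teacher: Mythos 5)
This statement is a conjecture, and the paper gives no proof of it: its supporting evidence is exactly what you invoke, namely Proposition \ref{propbkUpper} for the upper bound $b_k\le\bC_k$, Proposition \ref{propbkLowerCond} for the conditional lower bound, and the numerical simulations. Your proposal follows the same route and correctly isolates the same single missing step --- showing that for $b<\bC_k$ every domain of a minimal $k$-partition lifts to two components under $\Pi_2$, i.e.\ excluding domains whose loops have odd horizontal winding --- which is precisely the obstruction the paper acknowledges after Proposition \ref{propbkUpper} with reference to \cite[Section 5]{HelHof14}, so your concession that this step is presently out of reach matches the paper's own position.
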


Let us now try to analyse minimal partitions at the transition values. According to Proposition \ref{propPartEven}, $b_4=1/2\,$. The minimal $4$-partitions of $\Tor(1,1/2)$ are therefore nodal, associated with the eigenvalue $16\pi^2\,$. Since the eigenvalue $16\pi^2$ has multiplicity $4$, we obtain in this way minimal partitions which are not merely a translation of $\mathcal{D}_4(1,1/2)\,$. Figure \ref{figPartk4Nod} shows an example whose boundary contains singular points. 
\begin{figure}[!htbp]
\begin{center}
\includegraphics[height=2.2cm]{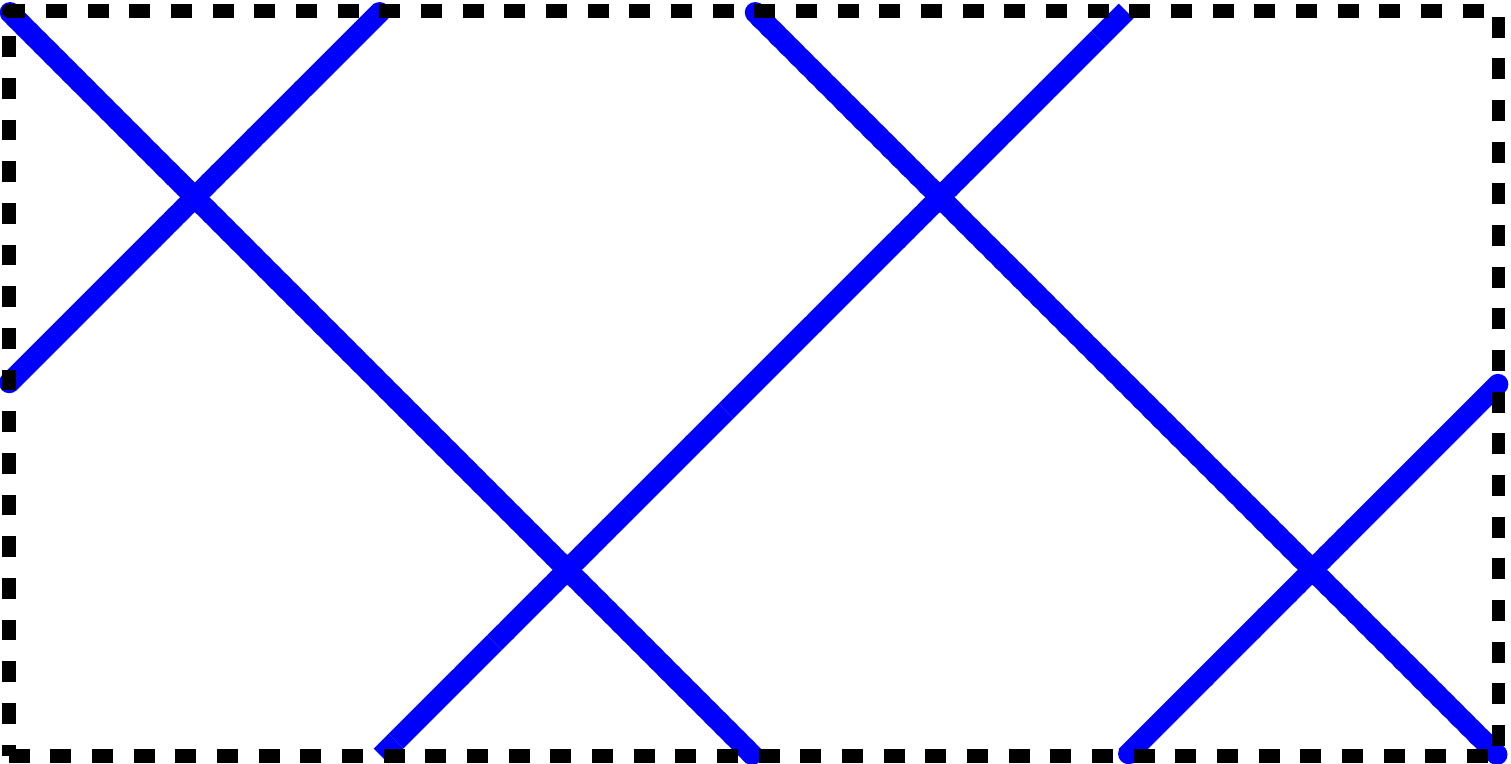}
\caption{A nodal $4$-partition of $\Tor(1,1/2)$ (associated with $\sin(4\pi x)+\sin\left(4\pi y\right)\,$). \label{figPartk4Nod}}
\end{center}
\end{figure}
We conjecture that this partition is a starting point for the apparition of non-nodal $4$-partitions of $\Tor(1,b)$ when $b=1/2+\varepsilon\,$, with $0<\varepsilon \ll 1 \,$. More precisely, we conjecture that each singular point of order four splits into two singular points of order three (see Figures \ref{fig4Part051}, \ref{fig4Part052}, and \ref{fig4Part053} in Section~\ref{subsec4part} for numerical simulations). A similar deformation mechanism was already suggested by the numerical simulations in \cite[Section 7]{BonHelHof09} and \cite[Sections 5 and 6]{BonLen14}, where the authors considered rectangles and sectors, rather than tori, and where a singular point appeared on the boundary of the domain.\\

In the case of an odd $k\,$, we are not able to give explicit examples of minimal $k$-partitions which are not translations of $\mathcal{D}_k(1,b)\,$. We can however construct candidates that would be minimal if Conjecture  \ref{conjkOdd} was true. For instance, for $k=3\,$, Conjecture \ref{conjkOdd} implies $b_{3}=1/\sqrt2$ and $\mathfrak{L}_3(\Tor(1,1/\sqrt{2}))=9\pi^2\,$, which means that any $3$-partition with energy $9\pi^2$ is minimal. We now look for antisymmetric eigenfunctions on $\Tor(2,1/\sqrt{2})\,$, associated with the eigenvalue $9\pi^2\,$, which have  $6$ nodal domains. After projecting the corresponding nodal partition on $\Tor(1,1/\sqrt{2})\,$, we obtain a $3$-partition with energy $9\pi^2\,$. Figure \ref{figNodal6Part} shows an example, in which the boundary contains singular points.
In the same way, Figure \ref{figPartk10Nod} shows how to obtain  a $5$-partition of $\Tor(1,1/\sqrt{6})$ by projection of a nodal $10$-partition of $\Tor(2,1/\sqrt{6})\,$, associated with the eigenvalue $25\pi^2\,$. The former partition is minimal provided Conjecture \ref{conjkOdd} is true. For $k=3$ and $k=5$\,, the partitions obtained numerically, for $b=2/\sqrt{k^2-1}+\eps\,$, seem close to these examples, with each singular point of order $4$ splitting into a pair of singular points of order $3$ (see Figures \ref{figk3fig2b072} and \ref{fig5PartSplit}).
\begin{figure}[!htbp]
\begin{center}
\subfigure[A nodal $6$-partition of $\Tor(2,1/\sqrt{2})$ (associated with $\cos(3\pi x)-\cos(\pi x)\,\cos(2\sqrt{2}\,\pi y)+\sin(\pi x)\,\sin(2\sqrt{2}\,\pi y)$).]{\includegraphics[height=2.6cm]{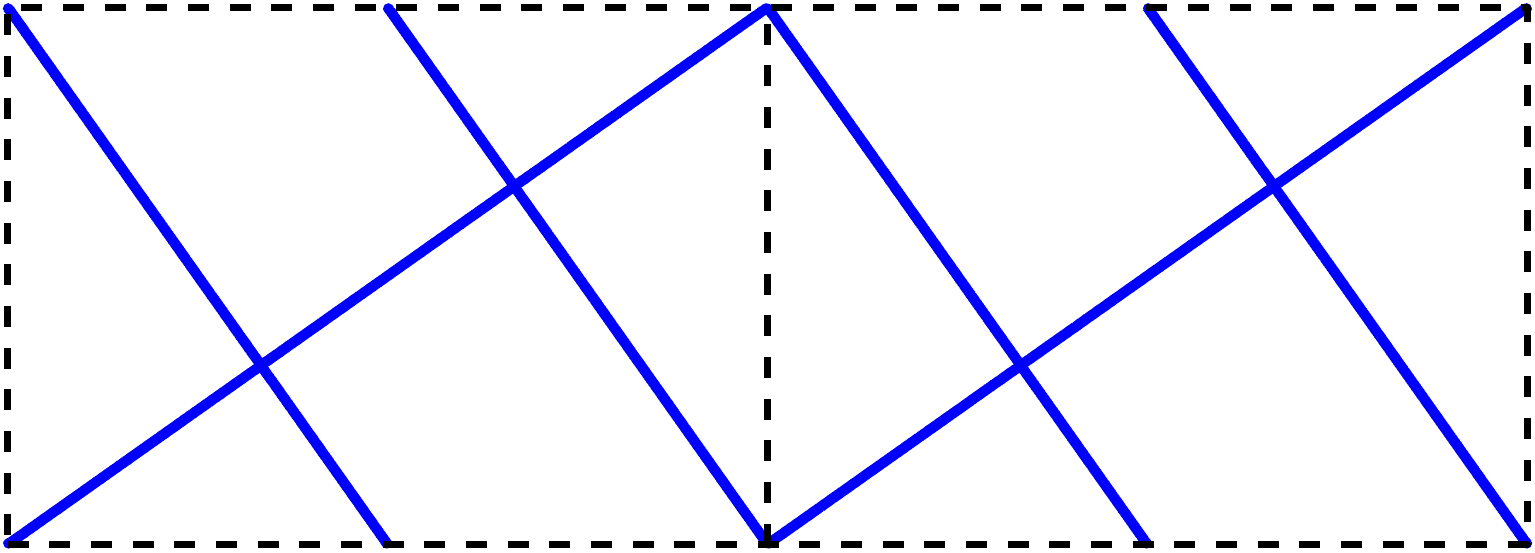}}
\hspace{2cm}
\subfigure[The $3$-partition of $\Tor(1,1/\sqrt{2})$ after projection.\label{figNodal6Partb}]{\includegraphics[height=2.6cm]{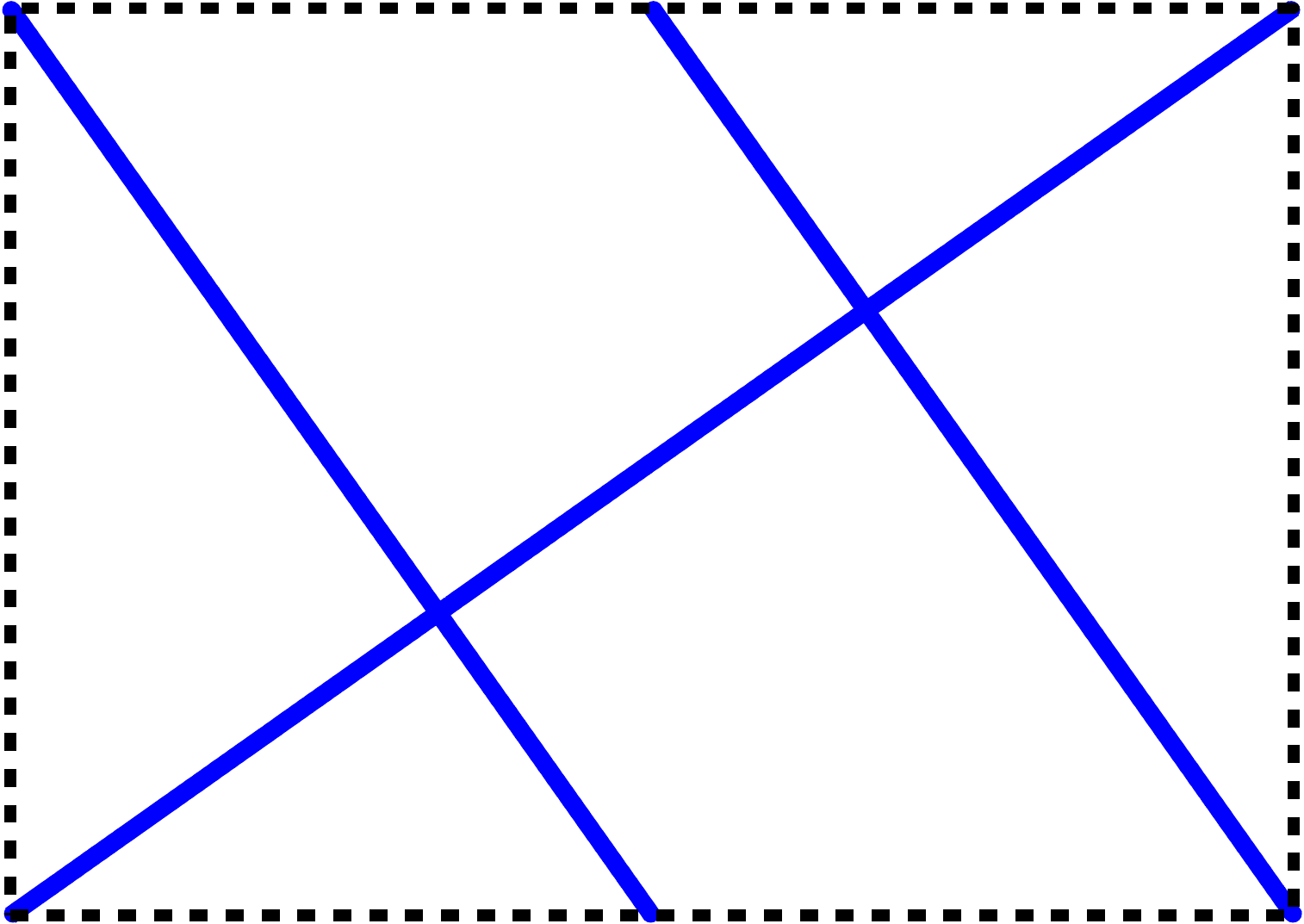}}
\caption{Construction of a $3$-partition of  $\Tor( 1,1/\sqrt{2})\,$.\label{figNodal6Part}}
\end{center}
\end{figure}
\begin{figure}[!htbp]
\begin{center}
\subfigure[A nodal $10$-partition of $\Tor(2,1/\sqrt{6})$ (associated with $\cos(5\pi x)+\sin(\pi x)\sin(2\pi\sqrt{6}y)-\cos(\pi x)\cos(2\pi\sqrt{6}y)$).]{\includegraphics[height=1.5cm]{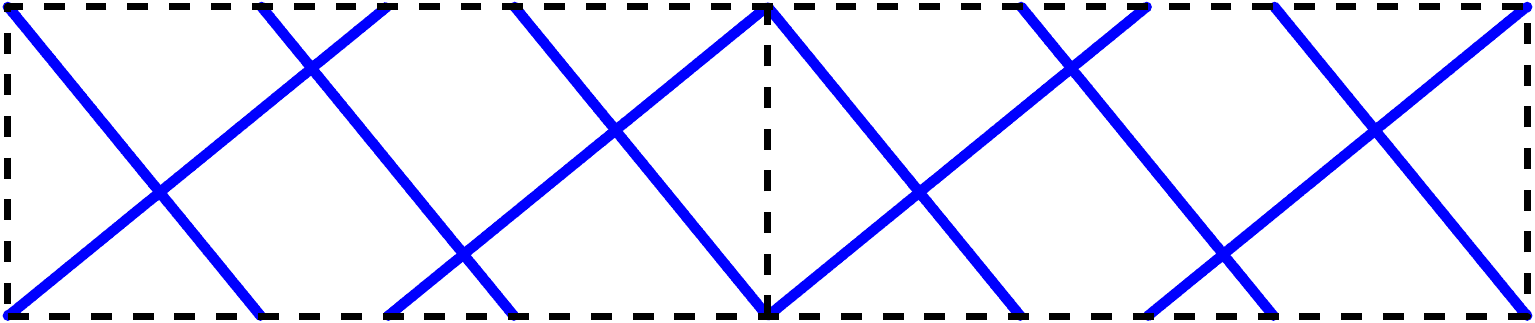}}
\hspace{2cm}
\subfigure[The $5$-partition of $\Tor(1,1/\sqrt{6})$ after projection.]{\includegraphics[height=1.5cm]{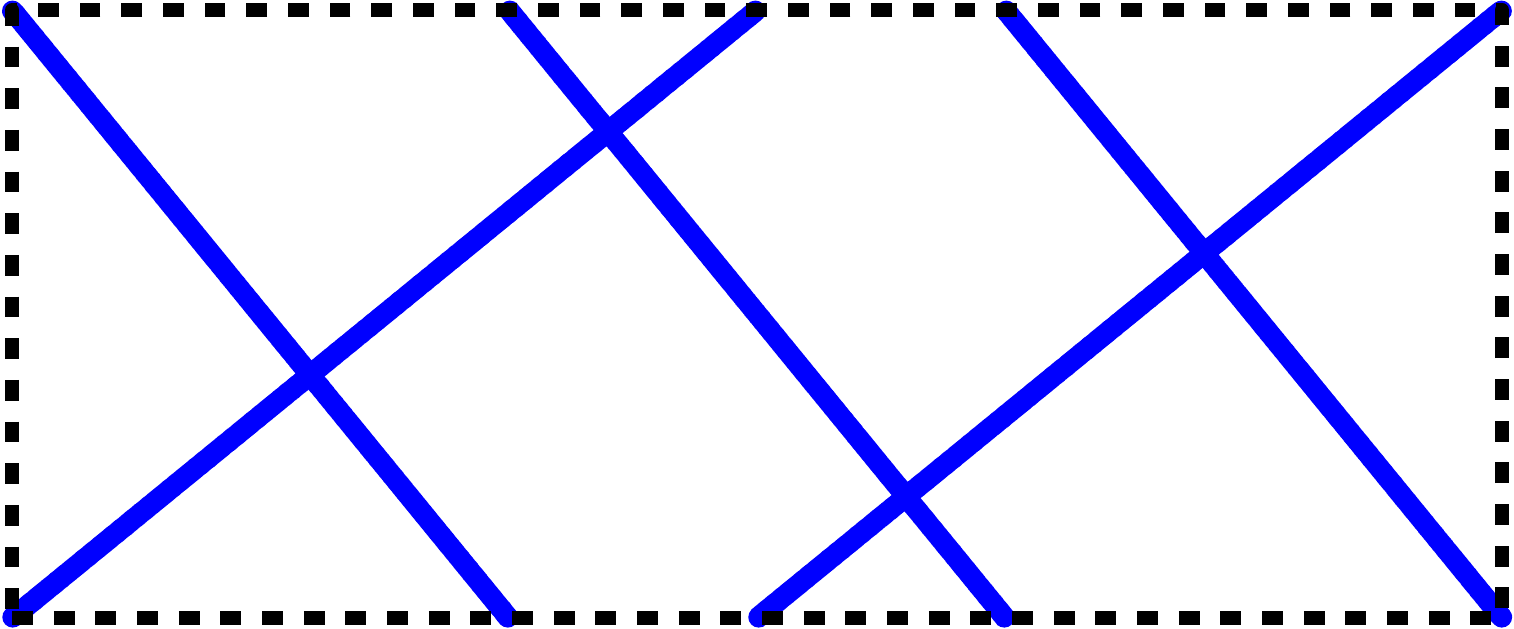}}
\caption{Construction of a $5$-partition of $\Tor(1,1/\sqrt6)\,$.\label{figPartk10Nod}}
\end{center}
\end{figure} 

\newpage
\section{Numerical study}
\label{secNumerics}

\subsection{Method}\label{sec.nummethod}
For our numerical investigations, we adapt the method introduced by B.~Bourdin, D.~Bucur, and {\'E}.~Oudet in \cite{BouBucOud09}. In order to apply it, we approach the energy \eqref{eq.nrj}, interpreted as an infinity norm of the first eigenvalues, by a $p$-norm.
\begin{defin}
\label{definPartpMin}
For any $1\le p < \infty\,$ and any $k$-partition $\mathcal{D}=(D_i)_{1\le i \le k}\,$, we define
 \begin{equation*}
 \Lambda_{k,p}(\mathcal{D})=\left(\frac{1}{k}\sum_{i=1}^{k}\lambda_1(D_i)^p\right)^{\frac{1}{p}}\,.
\end{equation*}
Then, we consider $\mathfrak{L}_{k,p}(M)=\inf\{\Lambda_{k,p}(\mathcal{D})\,;\, \mathcal{D}\in \mathfrak{P}_k\}\,$.
\end{defin}
In \cite{BouBucOud09}, the authors study the minimization of the sum of the first eigenvalues, which corresponds to the search for $\mathfrak{L}_{k,1}(M)\,$ in our notation. We extend the algorithm to cover the case $p\in [1,\infty)\,$ and look for the minimal energy $\mathfrak{L}_{k,p}(M)$ with $1<p<\infty$ large enough. This procedure is justified by the following result, proved in \cite{HelHofTer09}. 
\begin{prop}
 The minimal energy $\mathfrak{L}_{k,p}(M)$ is non-decreasing with respect to $p\,$, and 
 $$\lim_{p\to +\infty}\mathfrak{L}_{k,p}(M)=\mathfrak{L}_{k}(M)\,.$$
\end{prop}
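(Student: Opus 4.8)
The plan is to treat the two assertions separately, reducing each to an elementary inequality about power means that holds \emph{pointwise} in the partition, and only afterwards to pass to the infimum over $\mathfrak{P}_k\,$.

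For the monotonicity, I would first fix a partition $\mathcal{D} = (D_i)_{1\le i\le k}$ and set $a_i = \lambda_1(D_i) > 0\,$, so that $\Lambda_{k,p}(\mathcal{D})$ is exactly the power mean of order $p$ of the positive numbers $a_1,\dots,a_k\,$. The core estimate is that $p \mapsto \Lambda_{k,p}(\mathcal{D})$ is non-decreasing for each fixed $\mathcal{D}\,$. For $0 < p < q$ this follows from Jensen's inequality applied to the convex function $t \mapsto t^{q/p}$ and the uniform probability weights $1/k\,$: one obtains $\left(\frac{1}{k}\sum_i a_i^p\right)^{q/p} \le \frac{1}{k}\sum_i a_i^q\,$, and raising both sides to the power $1/q$ yields $\Lambda_{k,p}(\mathcal{D}) \le \Lambda_{k,q}(\mathcal{D})\,$. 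The normalisation $1/k$ is essential here: it is precisely what makes the power mean increasing, rather than decreasing, in $p\,$. Since this pointwise inequality holds for \emph{every} $\mathcal{D} \in \mathfrak{P}_k\,$, and the infimum is monotone with respect to pointwise inequalities, taking the infimum over $\mathfrak{P}_k$ on both sides gives $\mathfrak{L}_{k,p}(M) \le \mathfrak{L}_{k,q}(M)$ whenever $p < q\,$.

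For the limit, the key observation is a \emph{uniform} two-sided comparison between $\Lambda_{k,p}$ and $\Lambda_k\,$. Writing $a_{\max} = \max_i a_i = \Lambda_k(\mathcal{D})\,$, the elementary bounds $a_{\max}^p \le \sum_i a_i^p \le k\, a_{\max}^p$ give, after dividing by $k$ and taking $p$-th roots,
\[ k^{-1/p}\, \Lambda_k(\mathcal{D}) \;\le\; \Lambda_{k,p}(\mathcal{D}) \;\le\; \Lambda_k(\mathcal{D})\,. \]
The crucial feature is that the constant $k^{-1/p}$ does not depend on $\mathcal{D}\,$. Taking the infimum over all $\mathcal{D} \in \mathfrak{P}_k$ then propagates the estimate to $k^{-1/p}\, \mathfrak{L}_k(M) \le \mathfrak{L}_{k,p}(M) \le \mathfrak{L}_k(M)\,$, and letting $p \to +\infty\,$, so that $k^{-1/p} \to 1\,$, a squeeze argument yields the claimed limit.

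I do not expect a genuine obstacle, since both parts reduce to standard facts about power means. The only point requiring a little care is the interchange of the limit $p \to +\infty$ with the infimum over $\mathfrak{P}_k\,$; this is exactly what the partition-independent constant $k^{-1/p}$ is designed to handle, so that no compactness or equicontinuity of the family $\{\Lambda_{k,p}(\mathcal{D})\}_{\mathcal{D}}$ is needed.
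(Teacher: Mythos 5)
Your proof is correct and follows essentially the same route as the argument behind this proposition (the paper itself gives no proof, deferring to \cite{HelHofTer09}): the power-mean/Jensen inequality for monotonicity in $p$, and the partition-uniform two-sided bound $k^{-1/p}\Lambda_k(\mathcal{D})\le\Lambda_{k,p}(\mathcal{D})\le\Lambda_k(\mathcal{D})$, which passes to the infimum and yields the limit by squeezing.
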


To perform a numerical minimization, we mimic the method of \cite{BouBucOud09}: we replace the minimal $k$-partition problem by a relaxed version, where we look for $k$-tuples of functions $(f_1,\dots,f_k)$ which satisfy $\sum_{i=1}^{k}f_i=1$ and minimize a relaxed energy, depending on a small parameter $\eps>0\,$. After performing a finite difference discretization of this problem, we work with a matrix $\Phi$ of size $N\times k\,$. The integer $N$ is the number of points in the finite difference grid, and the entry $\Phi_{I,i}$ contains the (approximated) value of $f_i$ at the point indexed by $I$.  We optimize a discretized version of the energy using a gradient descent algorithm. To ensure a better convergence, we start from a random initial data on a coarse finite difference grid, and we make progressive refinements.\footnote{We thank {\'E}douard Oudet for giving us detailed  explanations on this point.} We refer the reader to \cite{BouBucOud09} for details on all these steps. In the end, we obtain a matrix $\Phi$ whose entries are either $0$ or $1\,$. We therefore have a discrete partition $(\widetilde D_{i})_{1\leq i\leq k}$ of the finite difference grid, where $\widetilde D_{i}$ contains the points $I$ such that $\Phi_{I,i}=1\,$.

To give an approximation of $\mathfrak{L}_{k,p}(\Tor(1,b))\,$, from the result of the numerical optimization, we have two further steps which are not included in \cite{BouBucOud09}:
\begin{itemize}
\item construct  a $k$-partition $\mathcal{D}=(D_{i})_{1\leq i\leq k}$ of $\Tor(1,b)$ from $(\widetilde D_{i})_{1\leq i\leq k}\,$;
\item compute the associated energy $\Lambda_{k}(\mathcal{D})\,$.
\end{itemize}

We want to construct a partition such that domains do not overlap and do not leave any part of $\Tor(1,b)$ uncovered. Let us show how this can be achieved on an example. Figure \ref{figPartNumConst} represents a discrete $3$-partition of a $7\times 6$ grid: the points of the grid are labeled by the domain to which they belong. We construct a boundary which separates points labeled by different integers,  To represent this boundary, we could use a grid whose vertices are the midpoints of the initial grid. However, since we work on the torus $\Tor(1,b)\,$, we can shift this grid by a half-step in the horizontal and vertical direction, and then deal with the initial grid. This gives us a strong partition of $\Tor(1,b)$ (see Definition \ref{definPartition}). Then, we compute an approximation of its energy, without relaxation, using a finite difference approximation of the Dirichlet-Laplacian. 
\begin{figure}[!htbp]
\begin{center}
 \includegraphics[width=4cm]{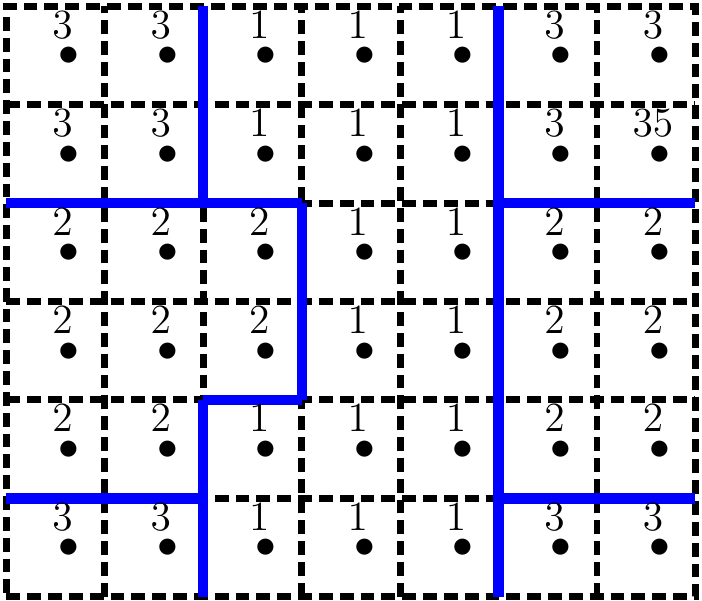}
 \caption{Partition obtained from the matrix $\Phi$.\label{figPartNumConst}}
\end{center}
\end{figure}

Let us point out that this optimization algorithm is not always successful. All the other parameters being equal, it  can converges rapidly to a good candidate for some initial data, whereas for others it terminates without reaching convergence.
To overcome this problem, we have made several simulations, starting from different initial data, and compared the resulting energies. We present the best candidates obtained in this way.

\subsection{Results}
\label{subsecNumRes}
\subsubsection{3-partitions of the torus $\Tor(1,b)$\label{subsec3part}}

We know from Proposition~\ref{thmbkOdd}, that $b_{3}>1/3\,$.
Conjecture~\ref{conjkOdd} suggests that $b_{3}$ equals $\bC_3=1/\sqrt{2}\simeq0.707\,$. If this is true, $ {\mathcal D}_3(1,b)$ should still  be minimal for $1/3<b \le 1/\sqrt 2\,$. To test this, we have implemented the method of Section~\ref{sec.nummethod} for $b\in \{j/100\,;\, j=34,\dots,100\}\,$. As was expected, the lowest energy in these cases is obtained for partitions of type $\mathcal D_{3}(1,b)$ as Figures \ref{figk3fig2b064}--\ref{figk3fig2b07} show.

\begin{figure}[h!]
\begin{center}
\subfigure[$b=0.64$\label{figk3fig2b064}]{\includegraphics[width=3.5cm]{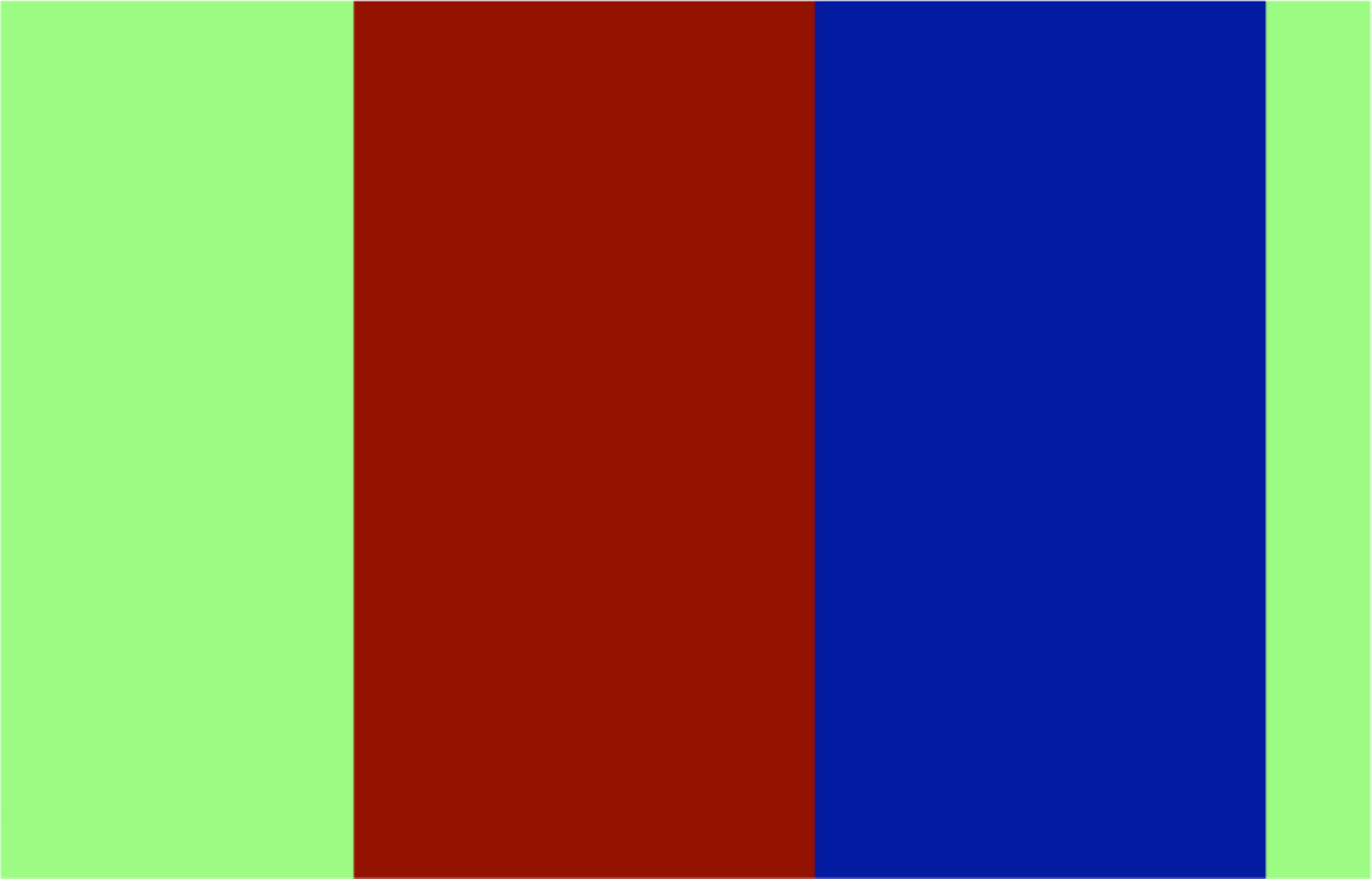}}\hfill
\subfigure[$b=0.7$\label{figk3fig2b07}]{\includegraphics[width=3.5cm]{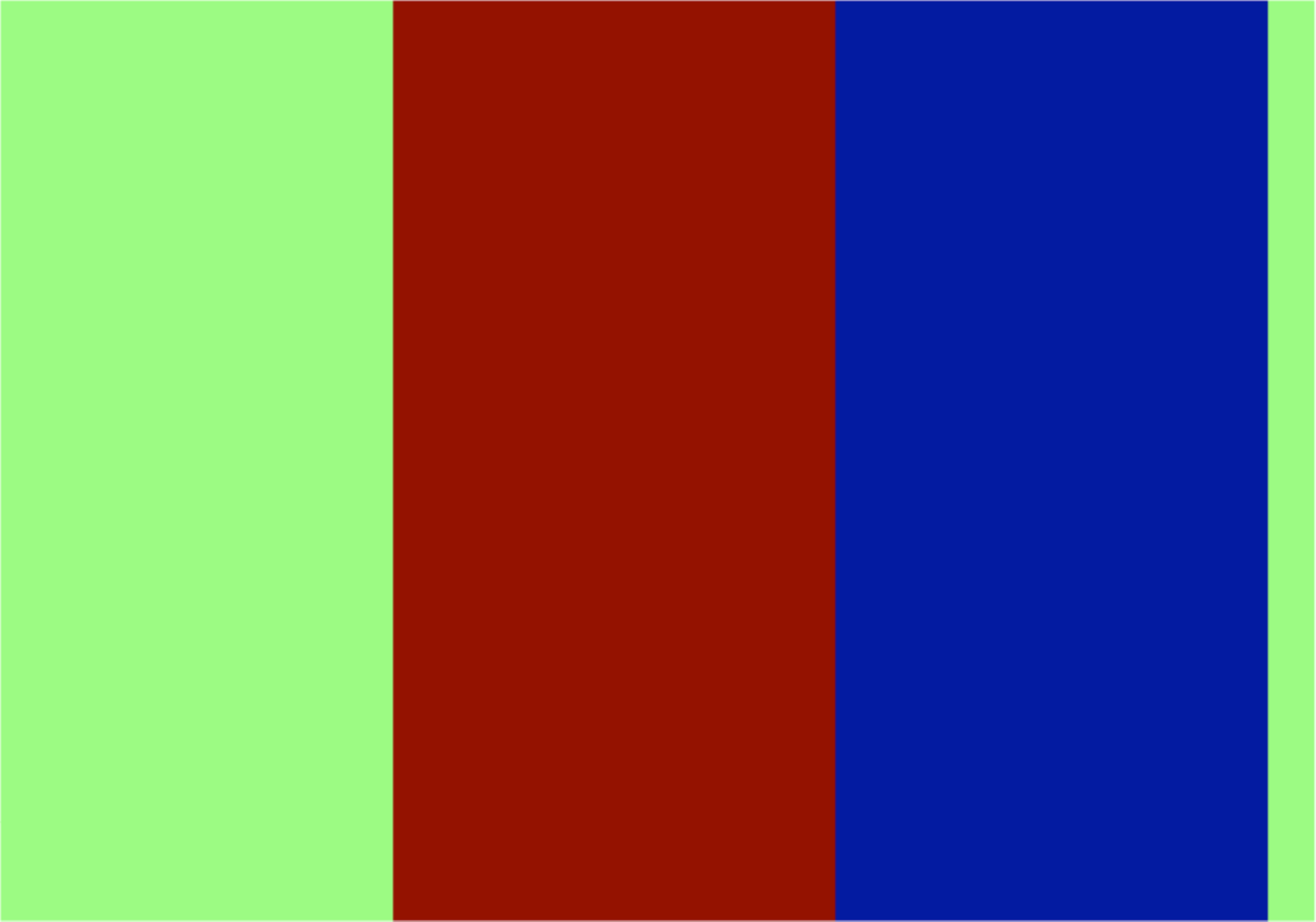}}\hfill
\subfigure[$b=0.71$\label{figk3fig2b071}]{\includegraphics[width=3.5cm]{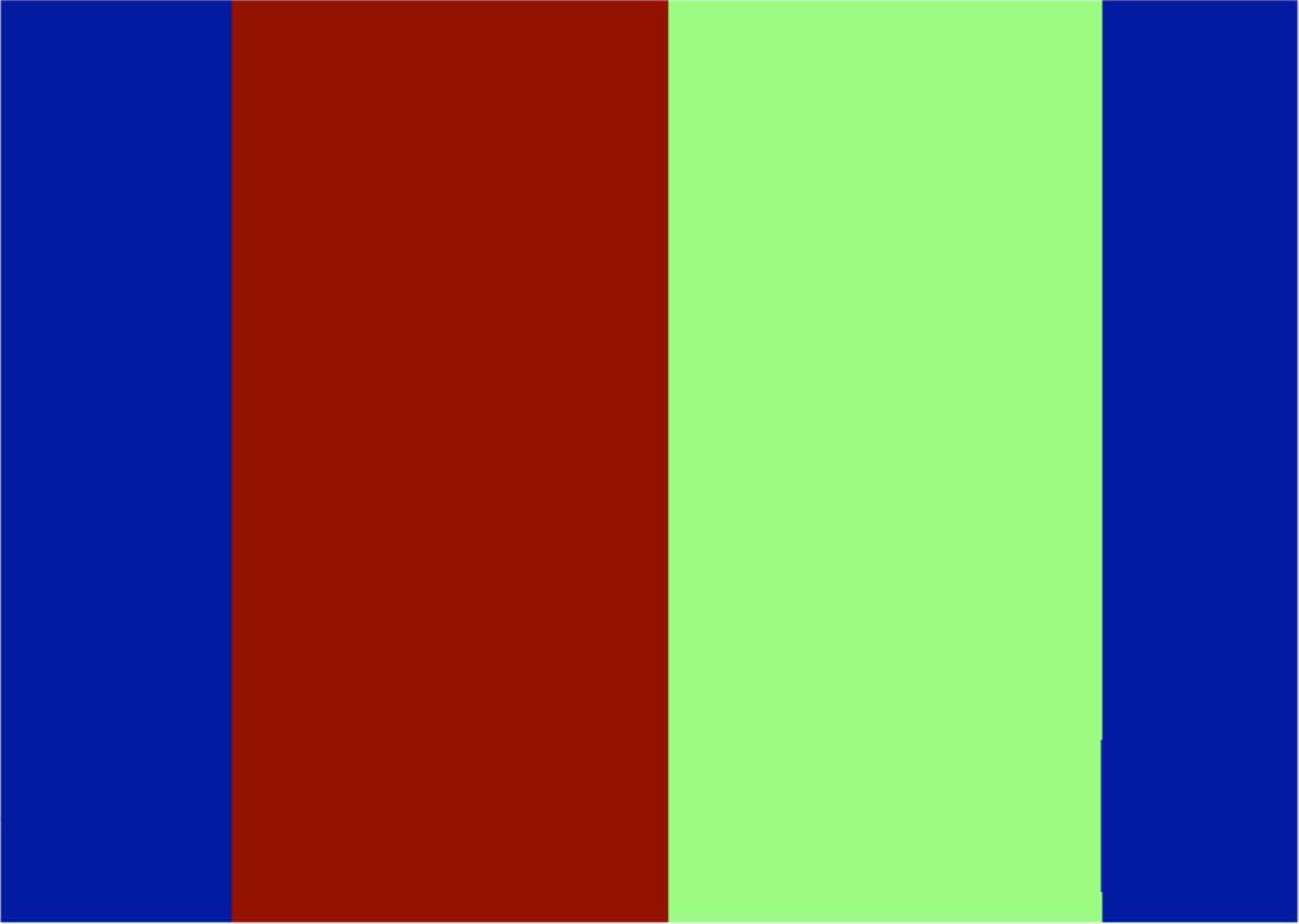}}\hfill
\subfigure[$b=0.72$\label{figk3fig2b072}]{\includegraphics[width=3.5cm]{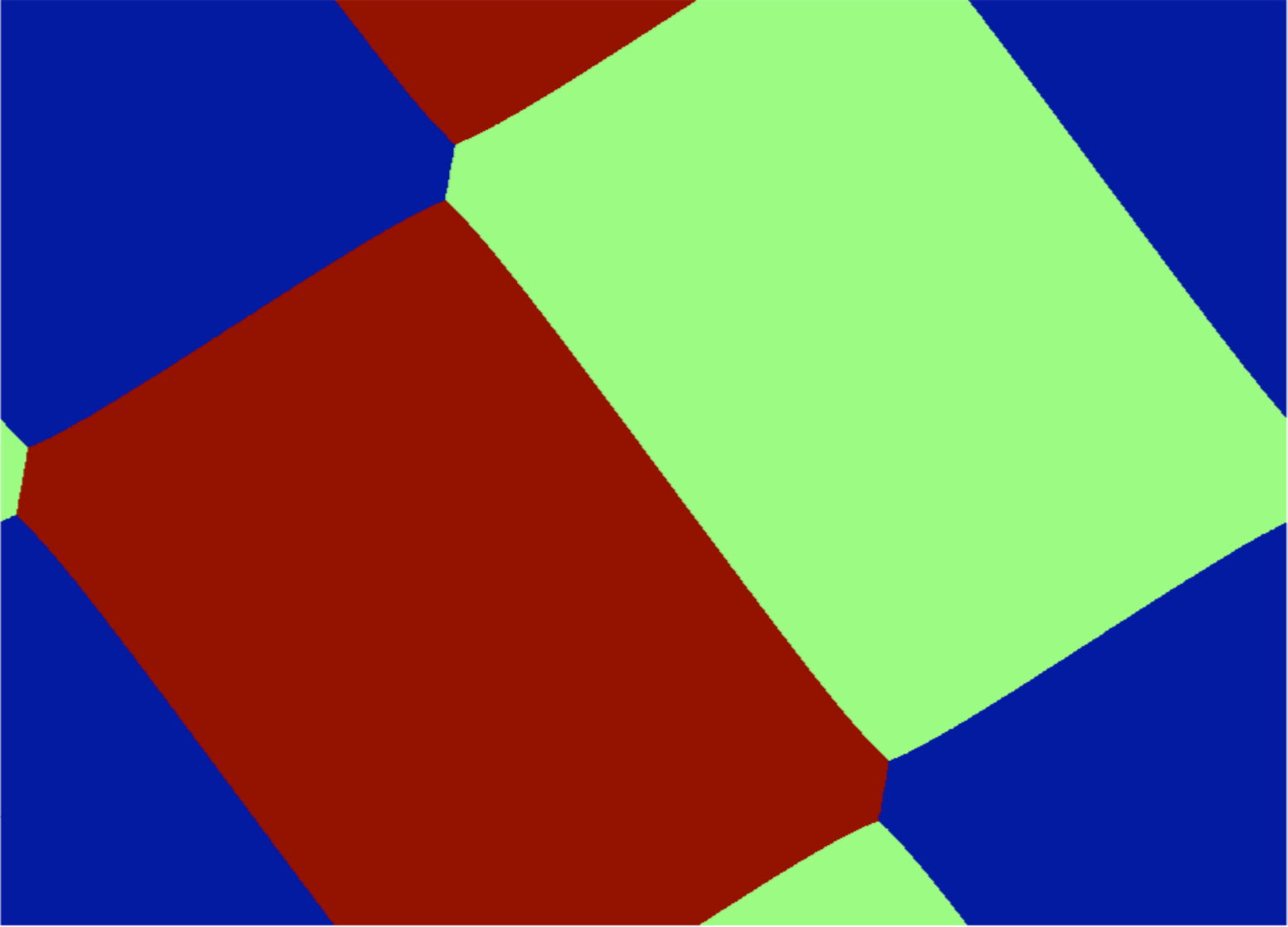}}\\
\subfigure[$b=0.73$\label{figk3fig2b073}]{\includegraphics[width=3.5cm]{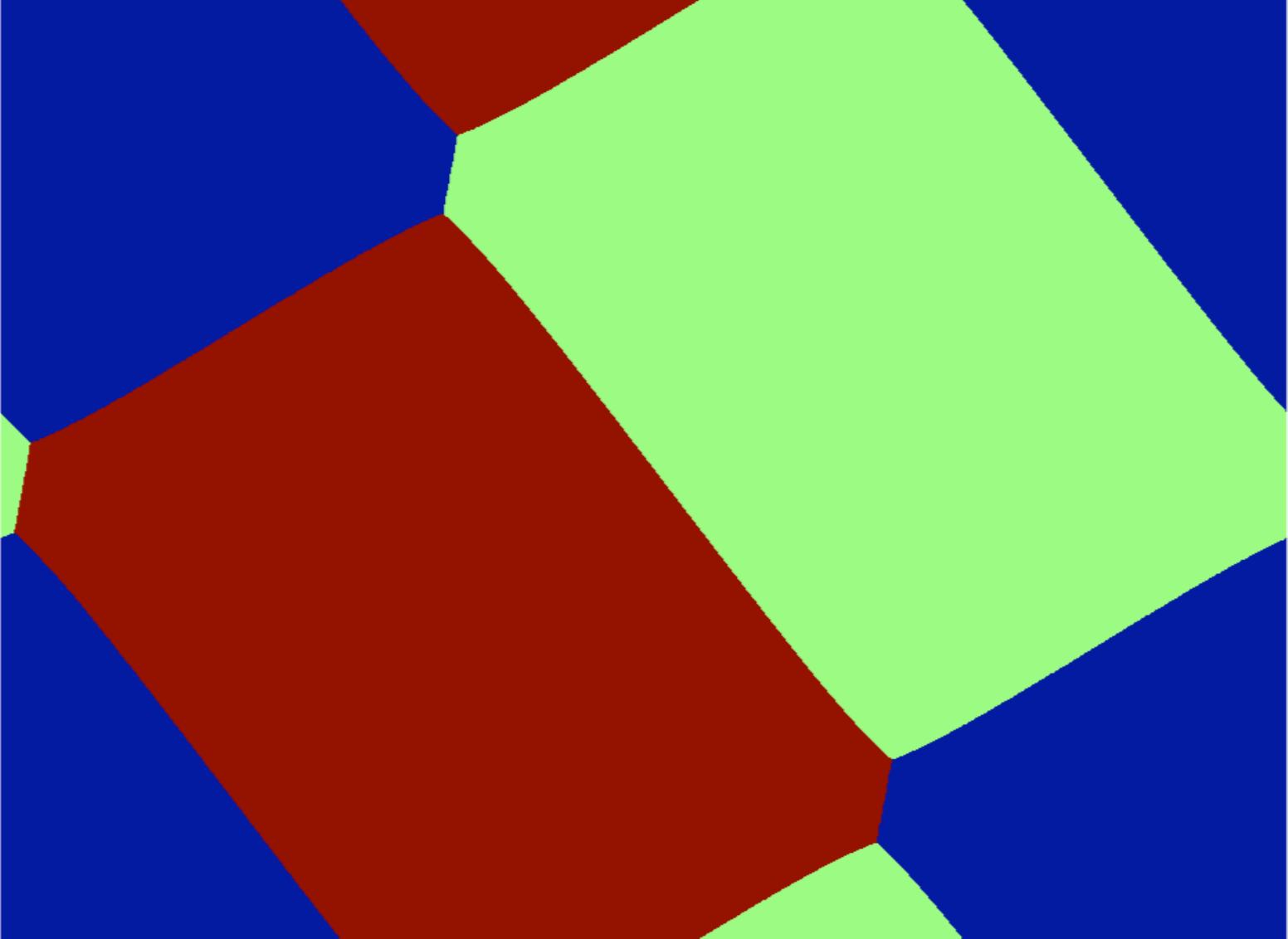}}\hfill
\subfigure[$b=0.8$\label{figk3fig2b08}]{\includegraphics[width=3.5cm]{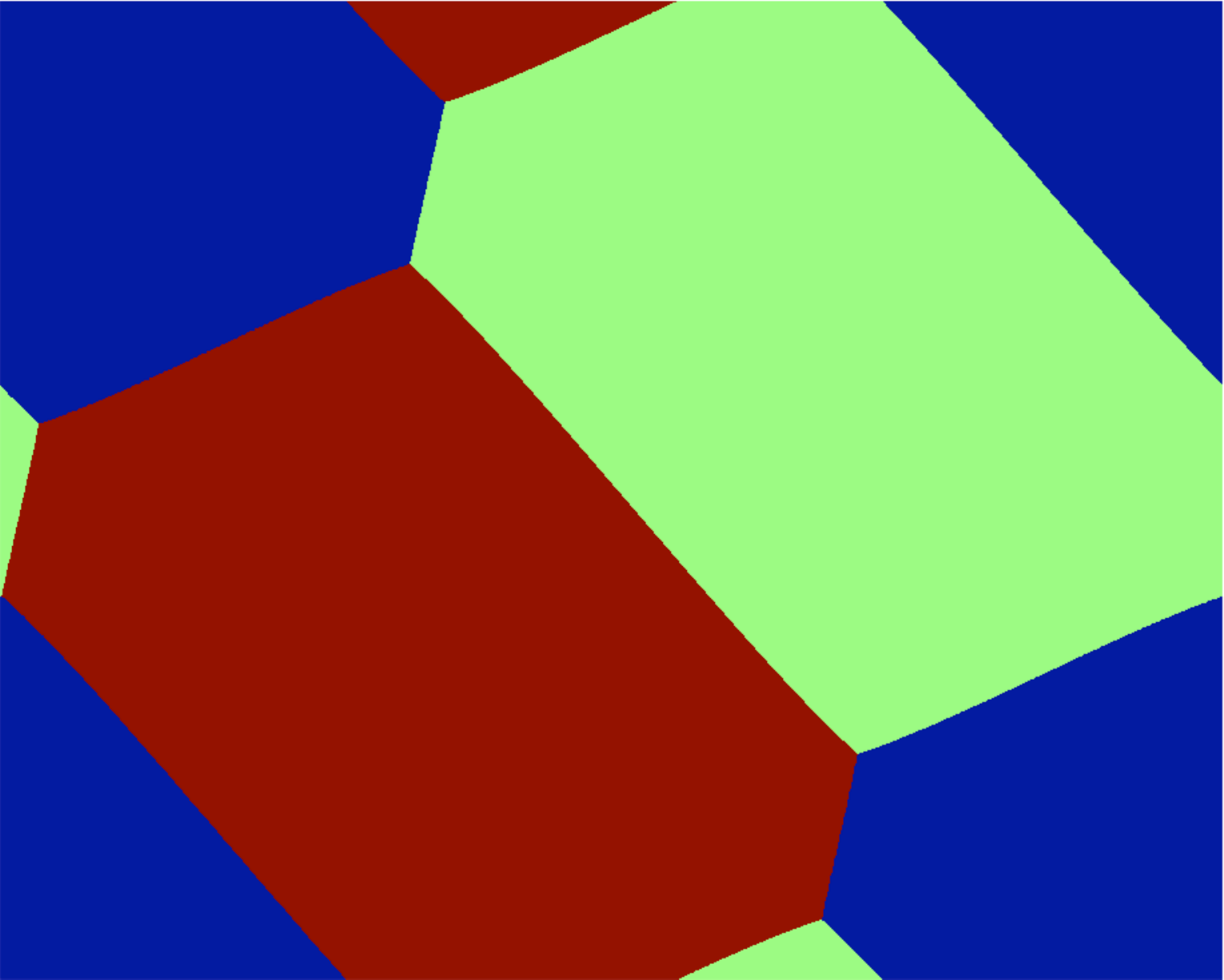}}\hfill
\subfigure[$b=0.9$\label{figk3fig2b09}]{\includegraphics[width=3.5cm]{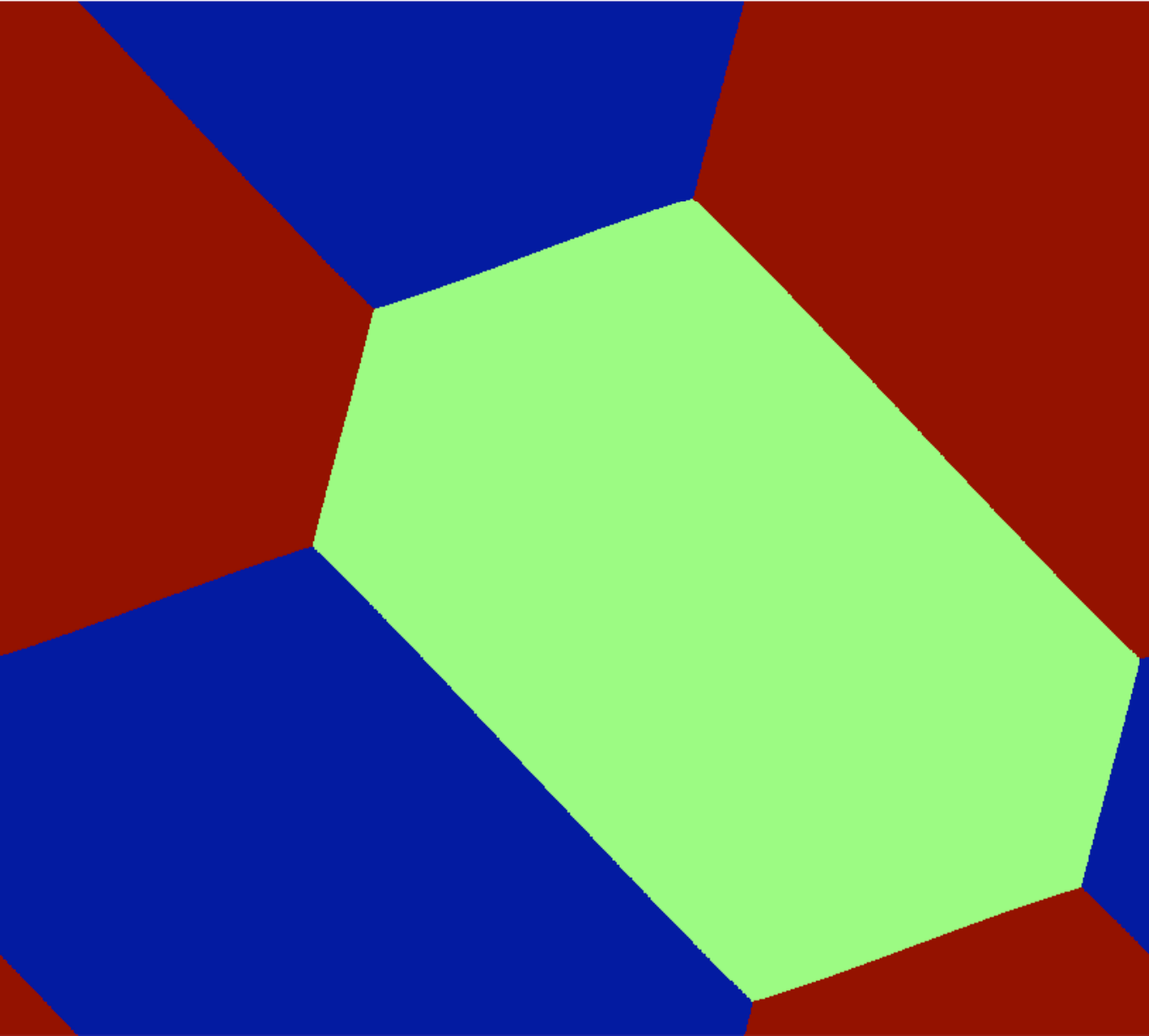}}\hfill
\subfigure[$b=1$\label{figk3fig2b1}]{\includegraphics[width=3.5cm]{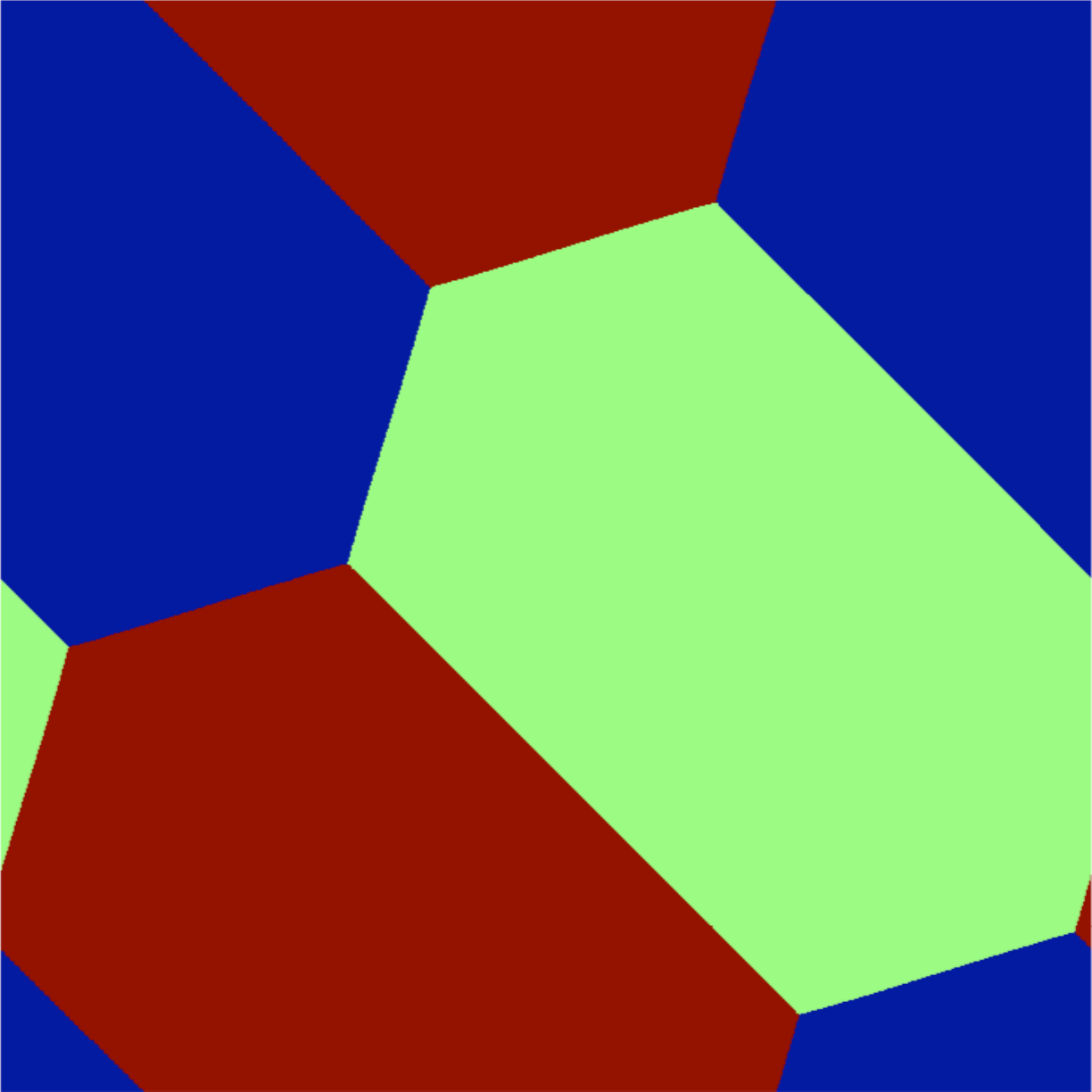}}
\caption{$3$-partitions for some values of $b\,$.\label{figk3fig2}}
\end{center}
\end{figure}

Let us now study what happens when $b$ is close to $\bC_3\,$.  Figure \ref{figk3fig2} shows our best result for different values of $b\,$. When $b$ is greater than $0.71\,$, the minimal partition seems to be a tiling of the torus by three isometric domains. These domains are roughly hexagonal, and close to the rectangles appearing in the partition of Figure \ref{figNodal6Part} when $b$ is close to $\bC_{3}\,$. For brevity, we will say in the following that a partition with hexagonal domains is \emph{hexagonal}.

For $b$ close to $\bC_3\,$, the final result of the optimization algorithm appears to be very sensitive to the initial data. As a consequence, the partitions of Figures \ref{figk3fig2b072} and \ref{figk3fig2b073} were not actually obtained by starting from random initial data. Rather, we ran the algorithm, starting from a random matrix, in the case $b=0.81\,$, where it produced an hexagonal partition similar to those of Figure \ref{figk3fig2}. We then used, as a starting point of the algorithm, the matrix obtained after two steps in the case $b=0.81\,$. Of course, we compared our final results with those of other runs starting from random initial data, and found that they always had a lower energy.  We used the same method  for $b$ close  $b_4$ when $k=4\,$, and for 
$b$ close to $\bC_5$ and $1$ when $k=5\,$.

For a larger $b\,$, up to $b=1\,$, the best candidates produced by the algorithm are still hexagonal partitions, as seen on Figures \ref{figk3fig2b08}--\ref{figk3fig2b1}. For each $b\,$, the energy of the best numerical candidate is an upper bound for $\mathfrak{L}_3(\Tor(1,b))\,$. This upper bound is plotted on Figure \ref{figfigk3.figVP} as a function of $b\,$, and compared with $9\pi^2=\Lambda_3(\mathcal{D}_3(1,b))\,$. We obtain a significant improvement for large $b\,$.  The third upper bound, represented by a solid line, will be discussed in Section \ref{secTilings}.

\begin{figure}[h!]\begin{center}
\includegraphics[height=6cm]{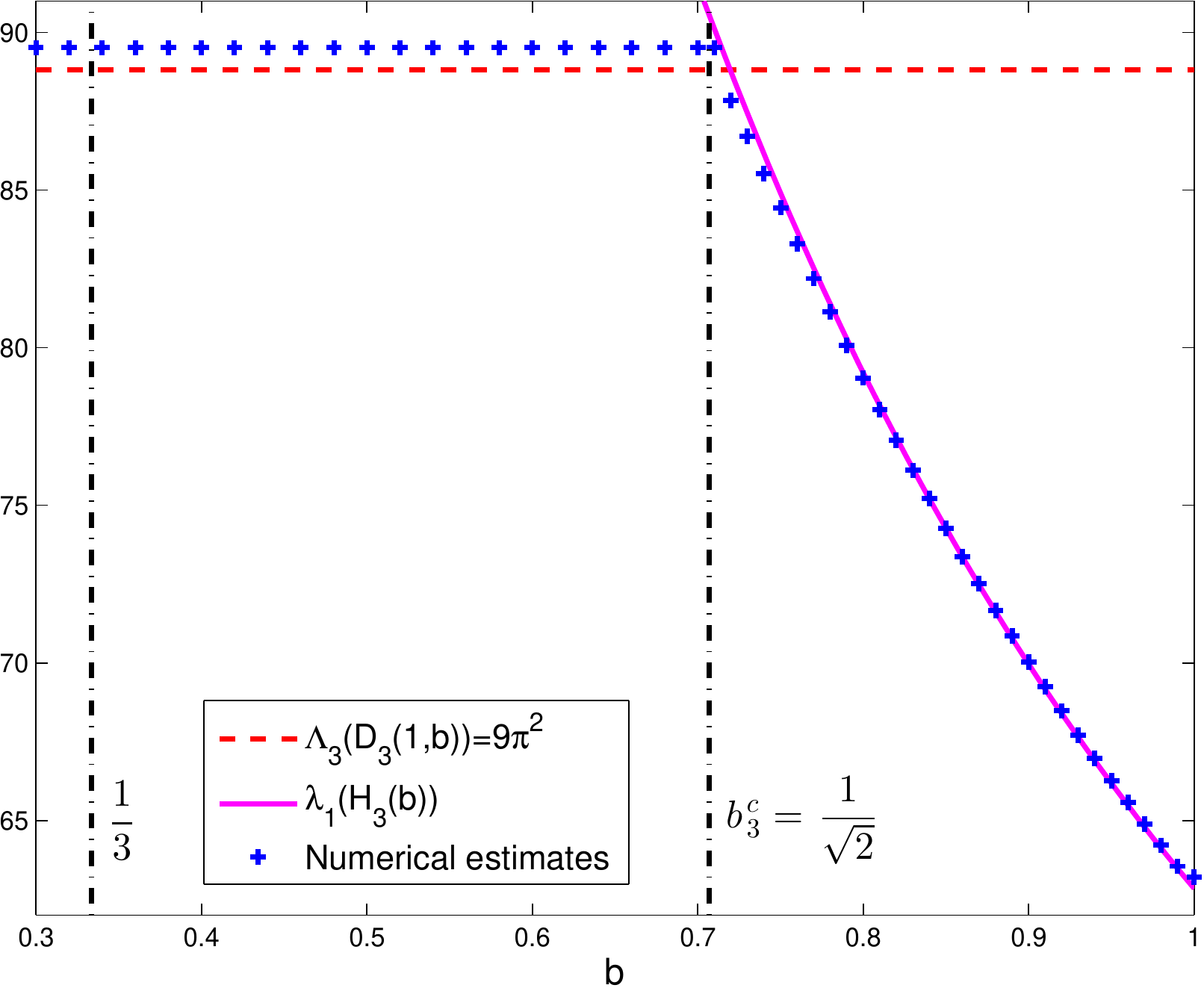} 
\caption{Upper bounds of $\mathfrak L_{3}(\Tor(1,b))$ for $b\in \{j/100\,;\, j=30\,,\,\dots\,,\,100\}\,$.\label{figfigk3.figVP}}
\end{center}
\end{figure}

\subsubsection{4-partitions of the torus $\Tor(1,b)$ \label{subsec4part}}
We know from Proposition~\ref{propPartEven} that $b_{4}=1/2\,$.  We are interested in the nature of minimal partitions for $b$ close to $b_4\,$. Figure \ref{fig4PartTrans} shows the best candidates obtained from the algorithm. 

\begin{figure}[h!] \begin{center}
\begin{minipage}{.73\linewidth}
\subfigure[$b=0.48$]{\includegraphics[width=3.5cm]{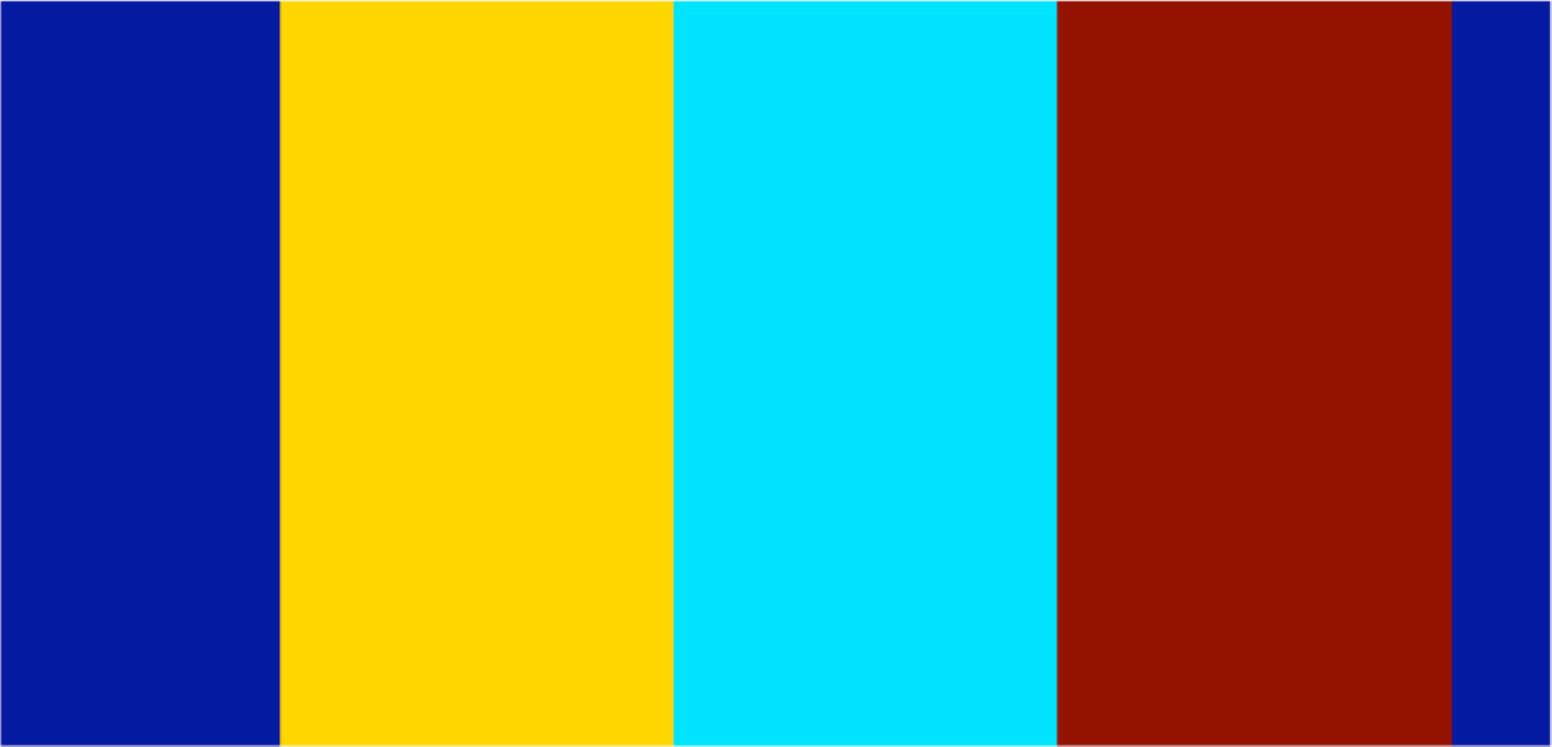}}\hfill
\subfigure[$b=0.49$]{\includegraphics[width=3.5cm]{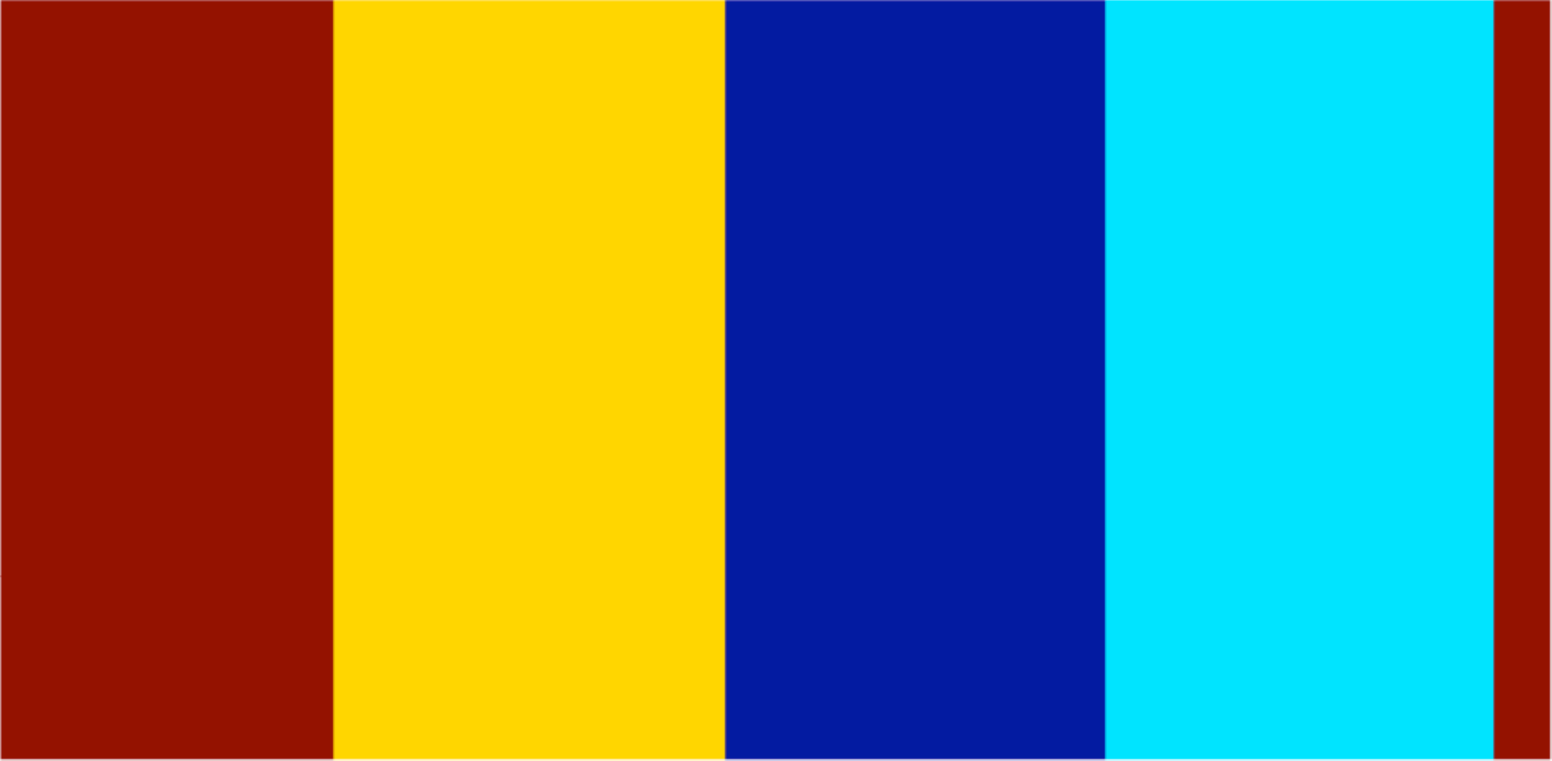}}\hfill
\subfigure[$b=0.50$]{\includegraphics[width=3.5cm]{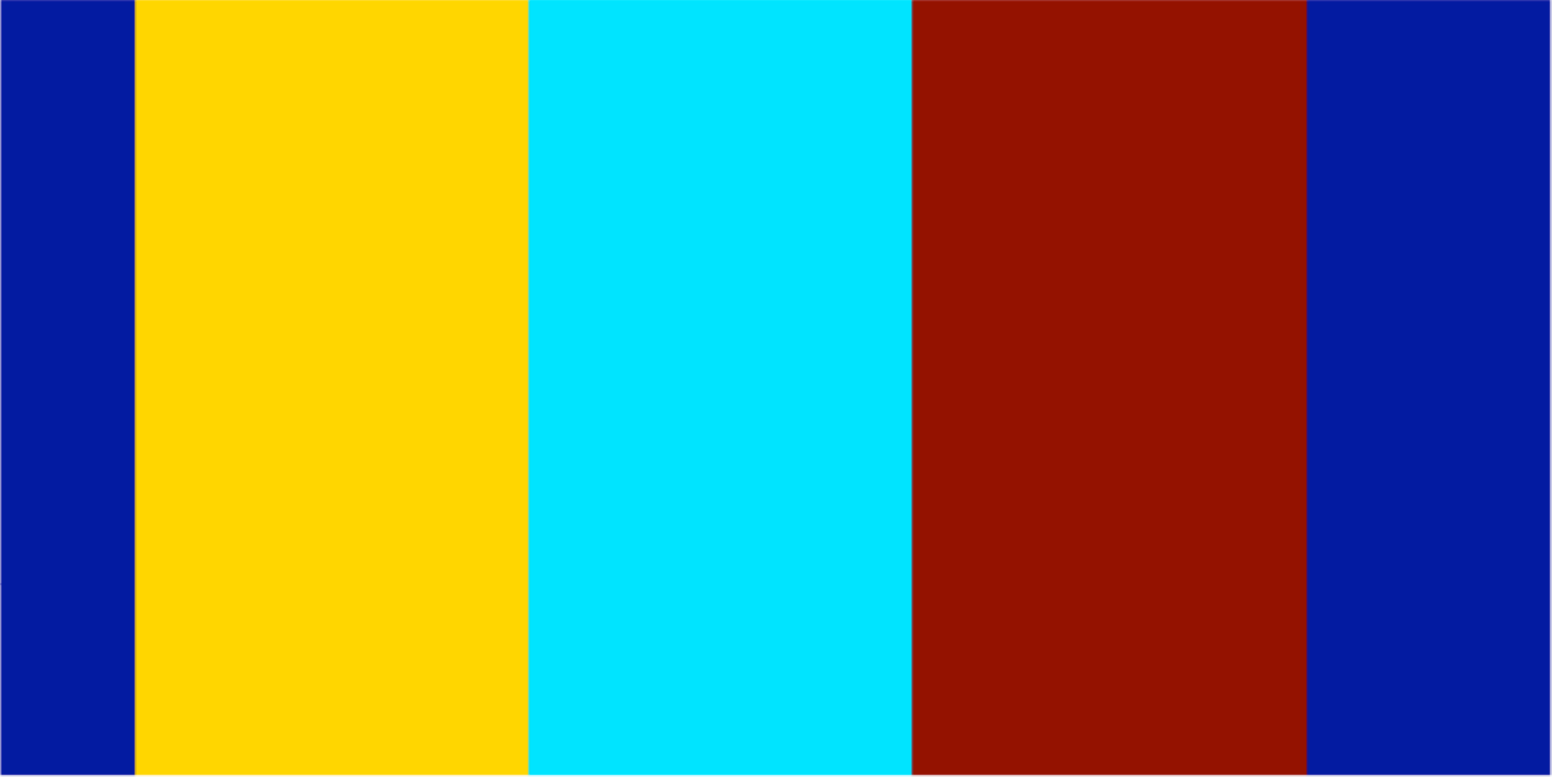}}\\
\subfigure[$b=0.51$\label{fig4Part051}]{\includegraphics[width=3.5cm]{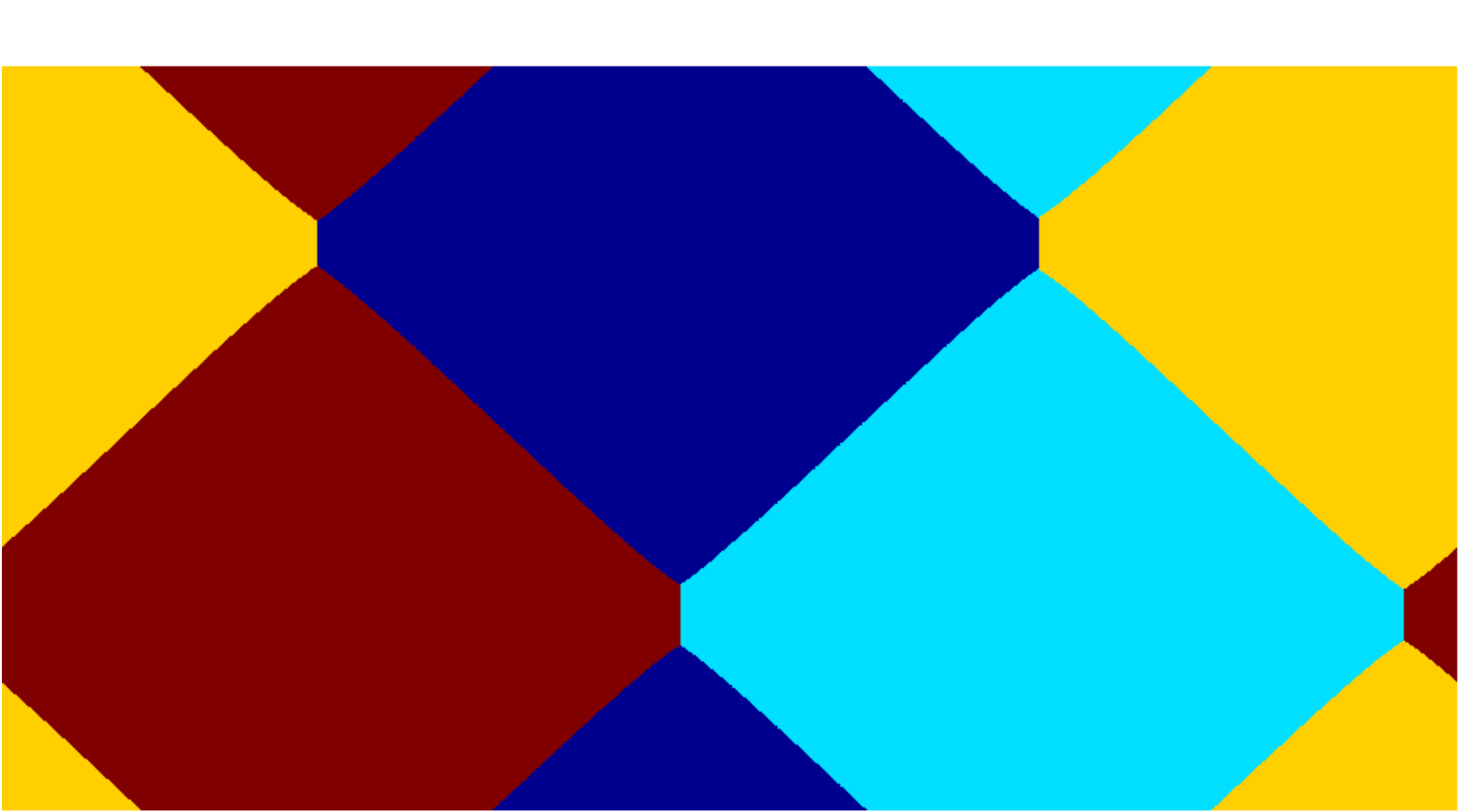}}\hfill
\subfigure[$b=0.52$\label{fig4Part052}]{\includegraphics[width=3.5cm]{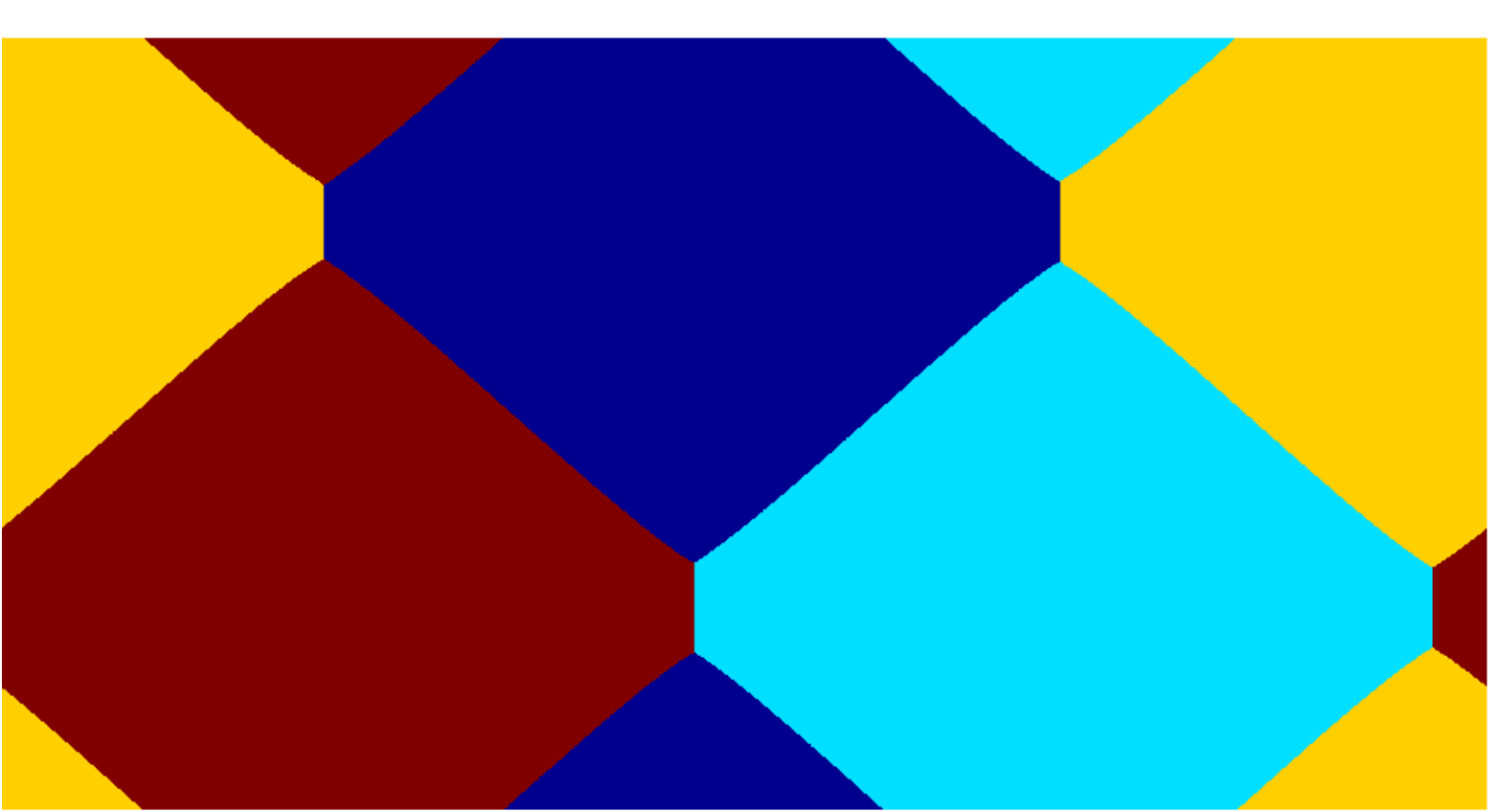}}\hfill
\subfigure[$b=0.53$\label{fig4Part053}]{\includegraphics[width=3.5cm]{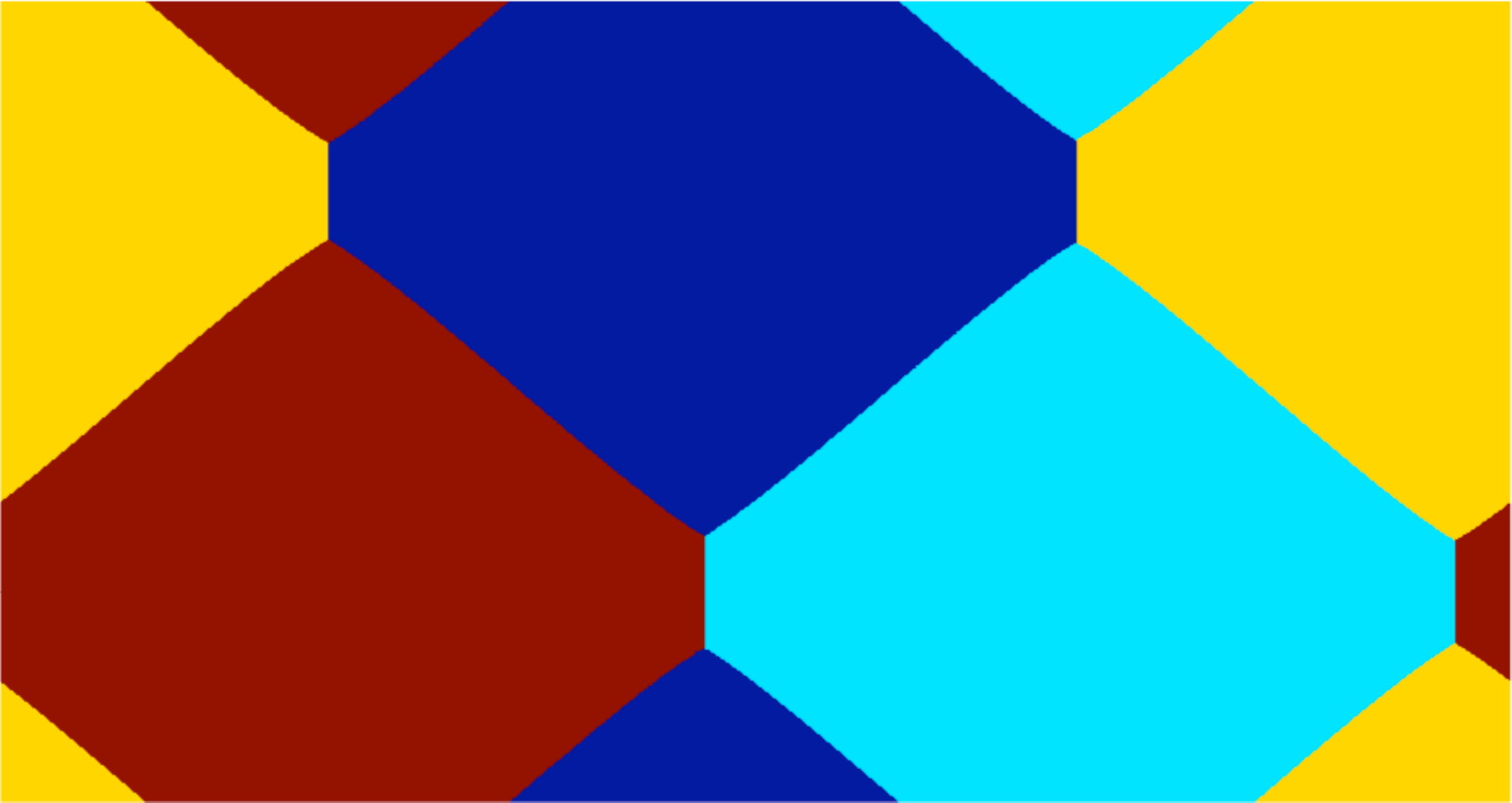}}
\end{minipage}
\hfill\hfill
\begin{minipage}{.25\linewidth}
\hfill\subfigure[$b=1$\label{fig4PartT11}]{\includegraphics[width=3.5cm]{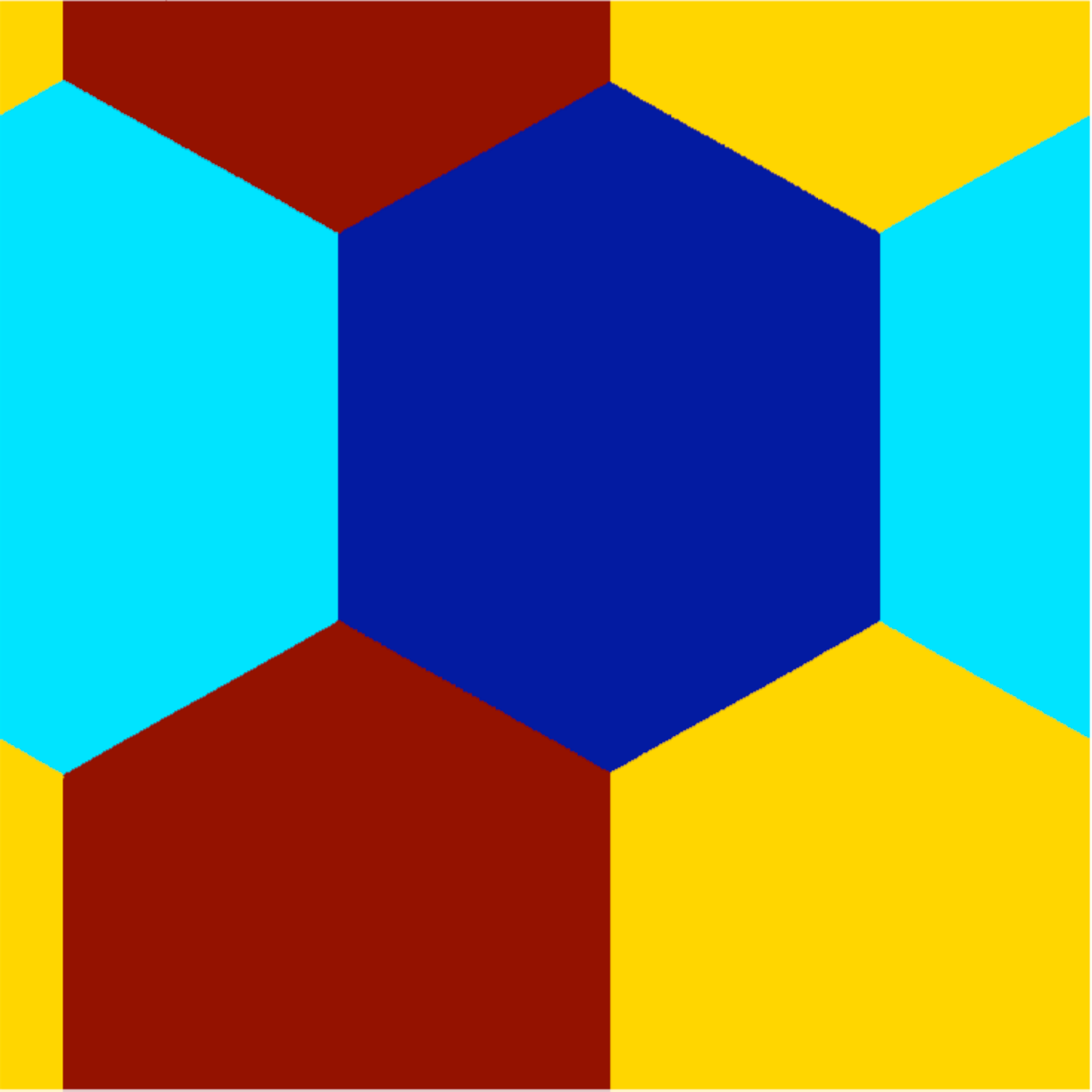}}
\end{minipage}
\caption{$4$-partition for $b\in\{j/100\,;\, j=48\,,\,\dots\,,\,53\}$ and $b=1\,$. \label{fig4PartTrans}}
\end{center} \end{figure}

As in the case $k=3\,$, we note the apparition of hexagonal partitions, shown on Figures \ref{fig4Part051}--\ref{fig4Part053}. The hexagonal domains of the partition shown on Figure \ref{fig4Part051} seem close to the square domains of the nodal $4$-partition shown on Figure \ref{figPartk4Nod}. This suggests that the partition of $\Tor(1,1/2)$ into four squares, shown on Figure \ref{figPartk4Nod}, is the starting point for the apparition of non-nodal $4$-partitions of $\Tor(1,b)\,$, when $b$ becomes greater than $1/2\,$.

For larger values of $b\,$, up to $b=1\,$, the minimal partitions are apparently still hexagonal. Figure \ref{fig4PartT11} shows for instance the best candidate for a minimal $4$-partition of $\Tor(1,1)\,$. The energy of the best candidates gives us an upper bound for $\mathfrak{L}_4(\Tor(1,b))\,$, represented on Figure \ref{figfigk4.figVP} as a function of $b\,$.

\begin{figure}[h!]\begin{center}
\includegraphics[height=6cm]{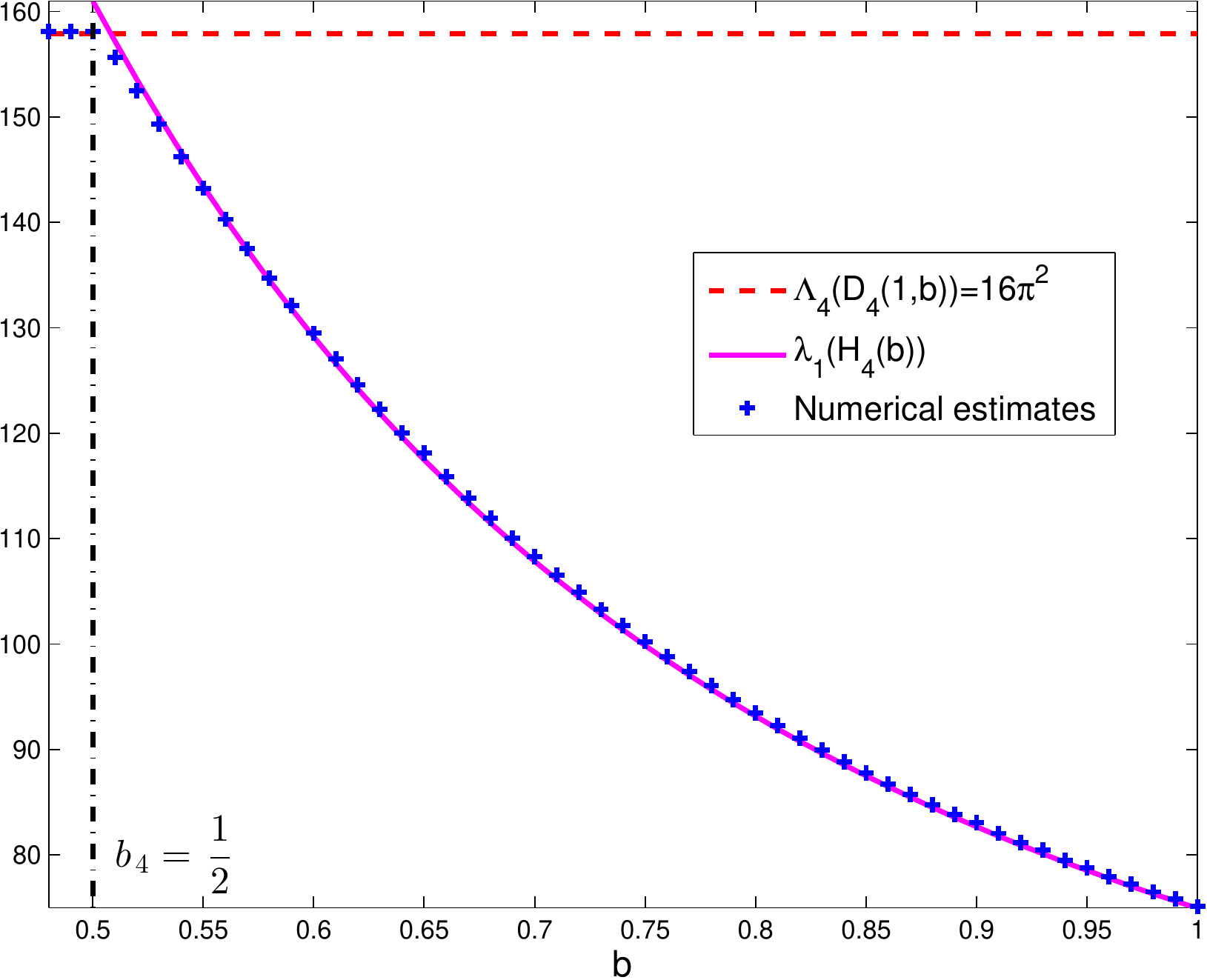}  
\caption{Upper bounds of $\mathfrak L_{4}(\Tor(1,b))$ for $b\in \{j/100\,;\, j=48\,,\,\dots\,,\,100\}\,$. \label{figfigk4.figVP}}
\end{center}\end{figure}

\subsubsection{5-partitions of the torus $\Tor(1,b)$\label{subsec5part}}
\begin{figure}[h!] \begin{center}
\subfigure[$b=0.40$]{\includegraphics[width=3.4cm]{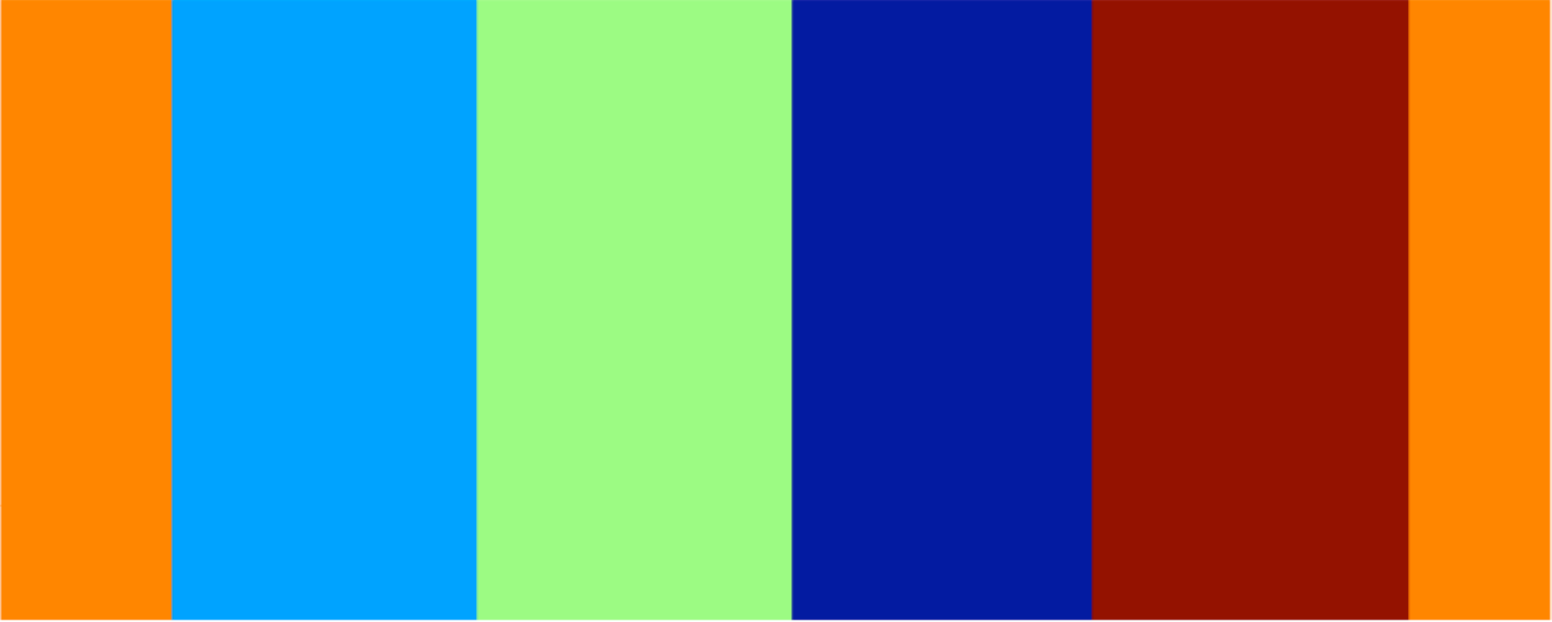} }\hfill
\subfigure[$b=0.41$]{\includegraphics[width=3.4cm]{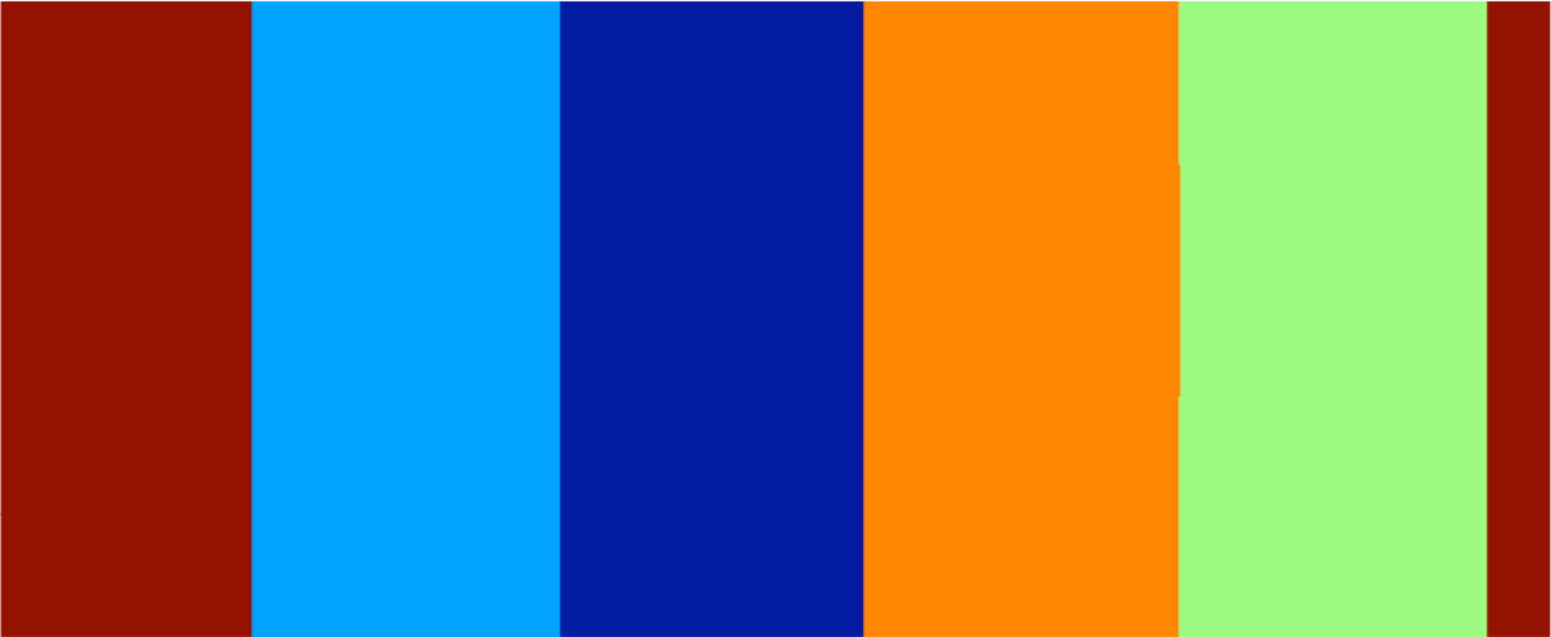} }\hfill
\subfigure[$b=0.42$\label{fig5PartSplit}]{\includegraphics[width=3.4cm]{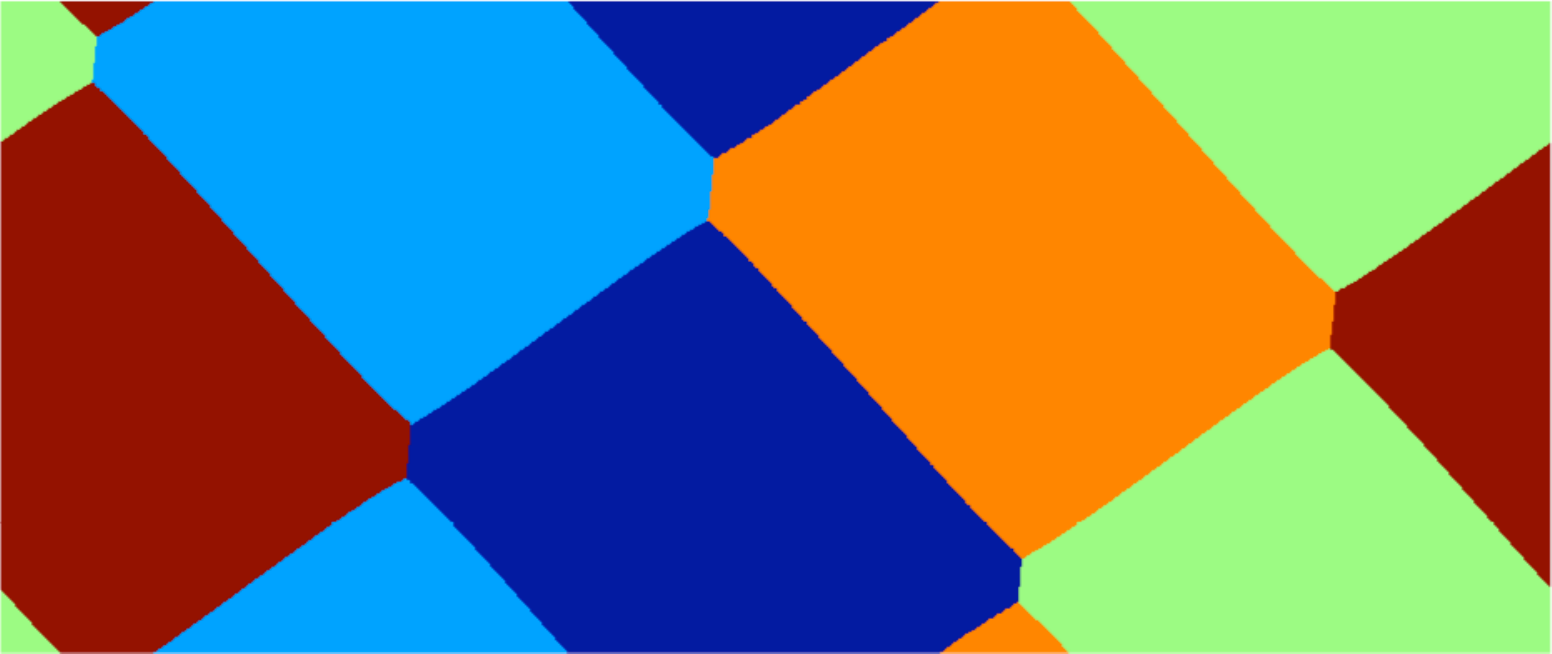} }\hfill
\subfigure[$b=0.43$]{\includegraphics[width=3.4cm]{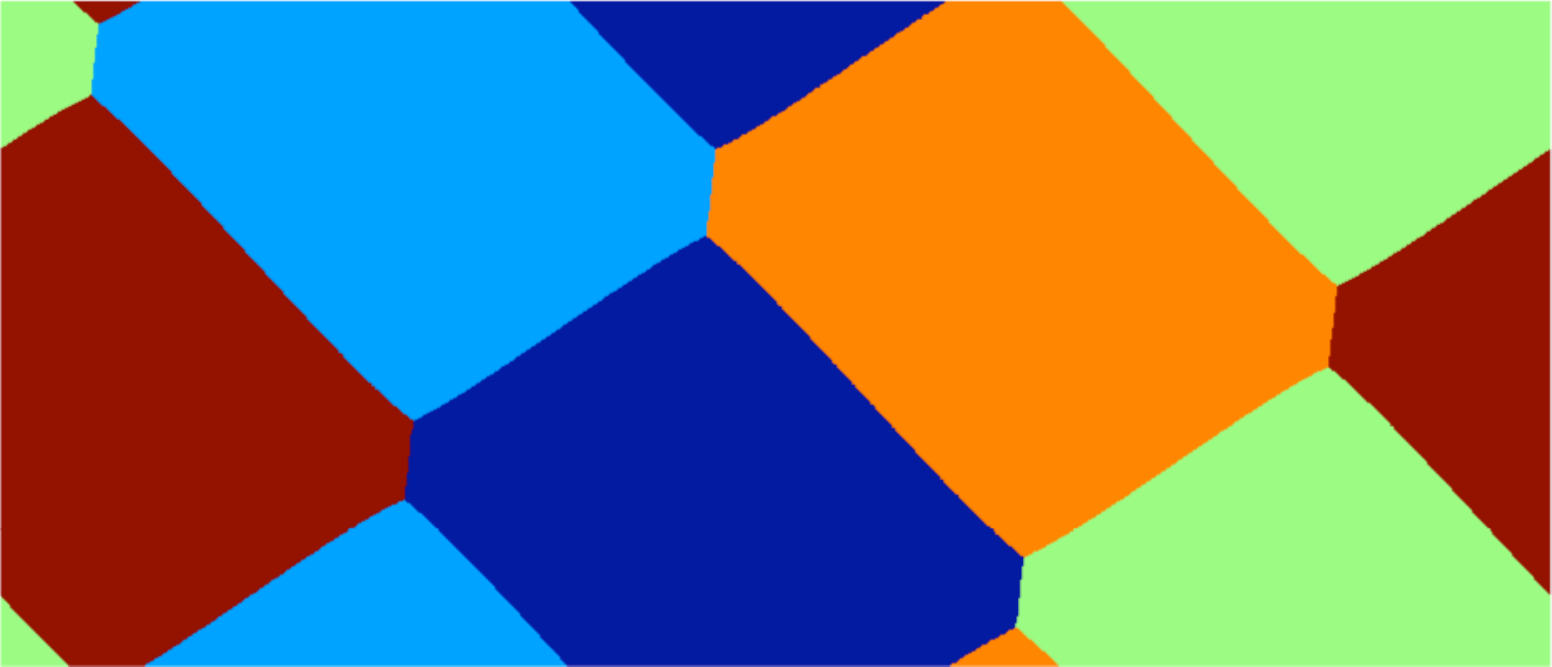} }\\
\subfigure[$b=0.44$]{\includegraphics[width=3.4cm]{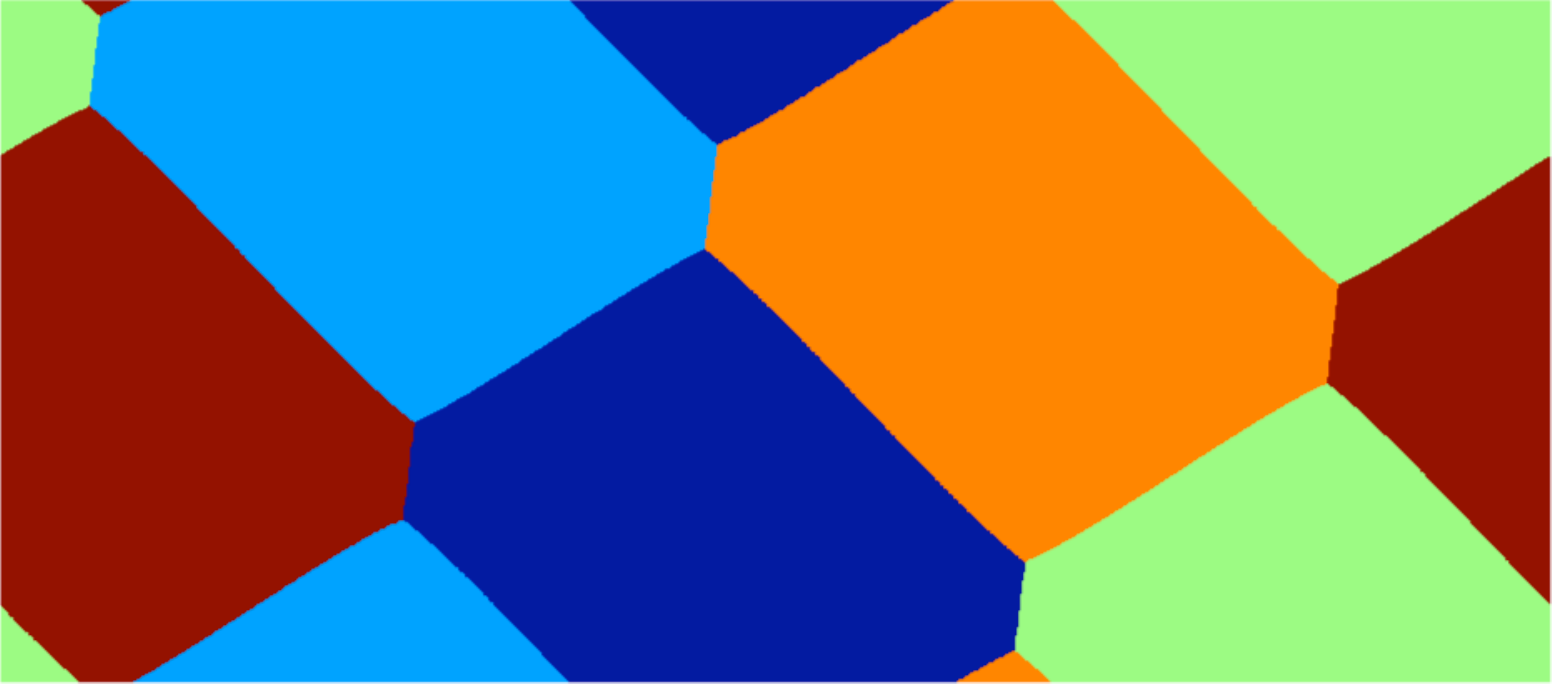} }\hfill
\subfigure[$b=0.45$]{\includegraphics[width=3.4cm]{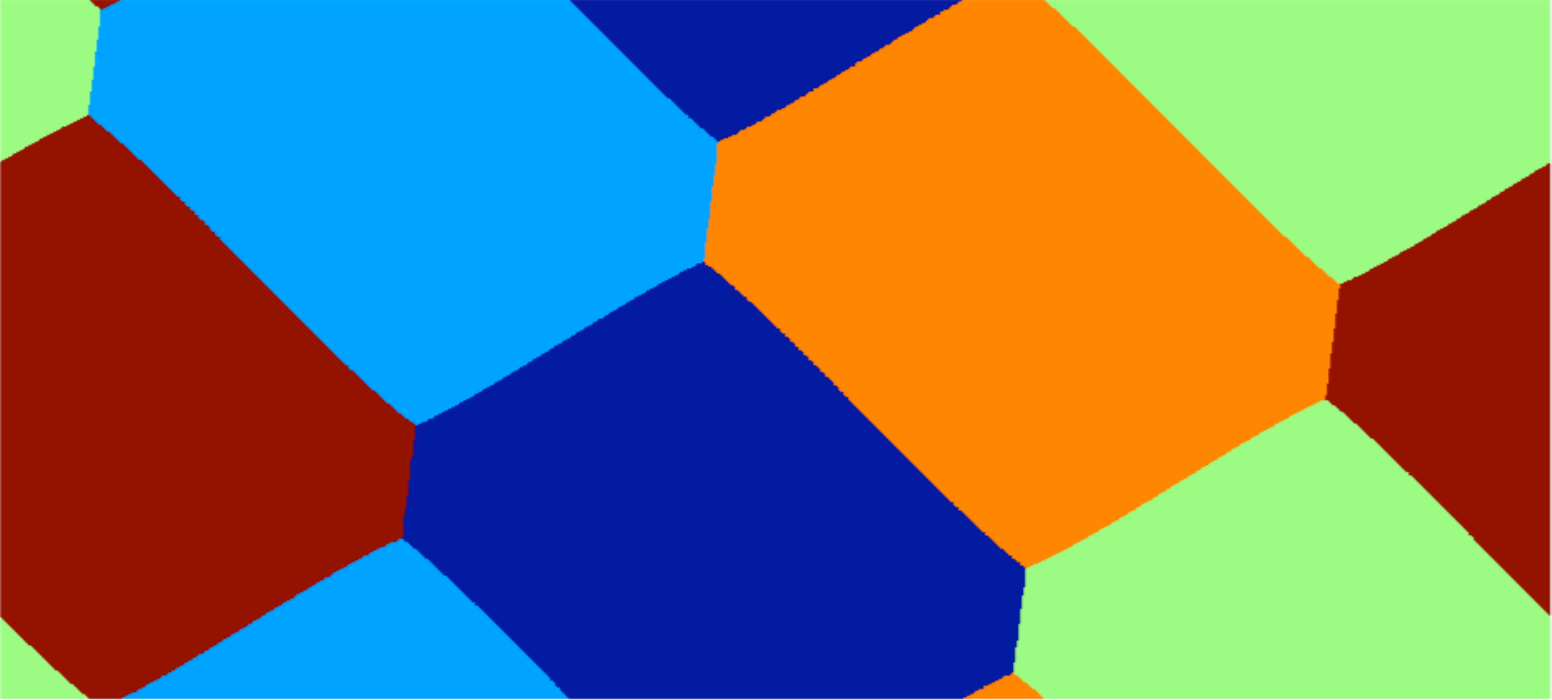} }\hfill
\subfigure[$b=0.5$]{\includegraphics[width=3.4cm]{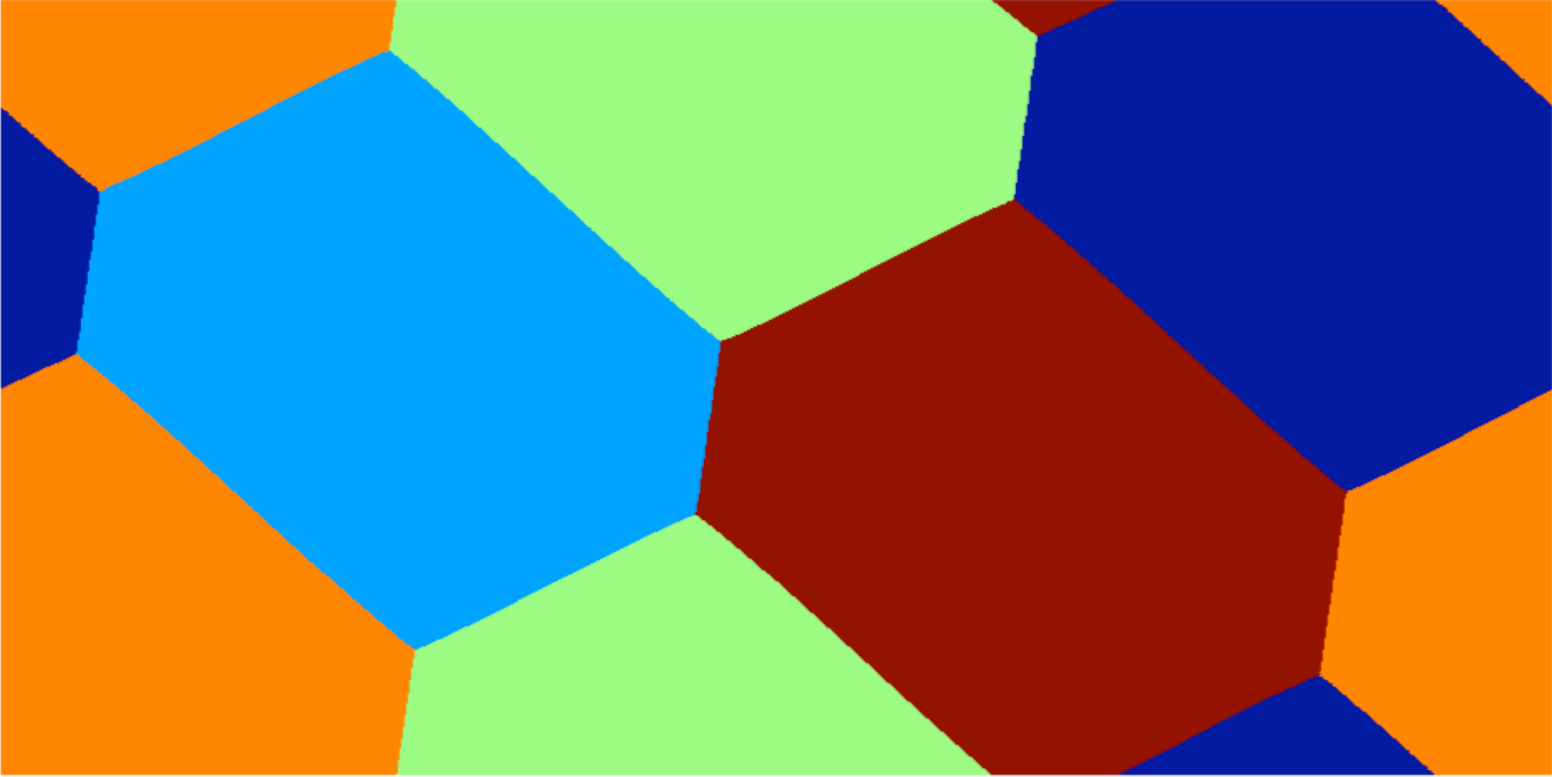} }\hfill
\subfigure[$b=0.7$]{\includegraphics[width=3.4cm]{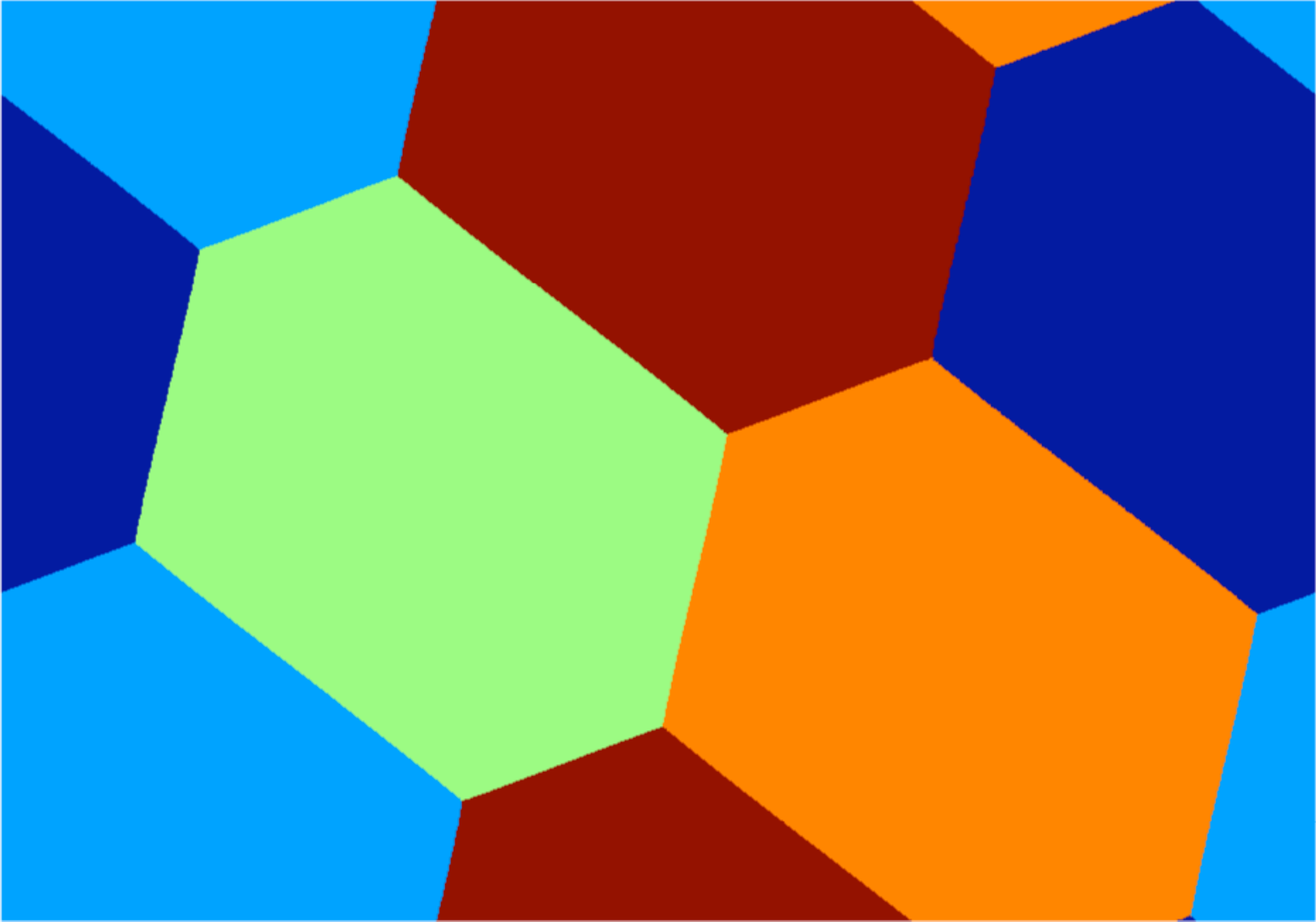} }\\
\subfigure[$b=0.9$]{\includegraphics[width=3.4cm]{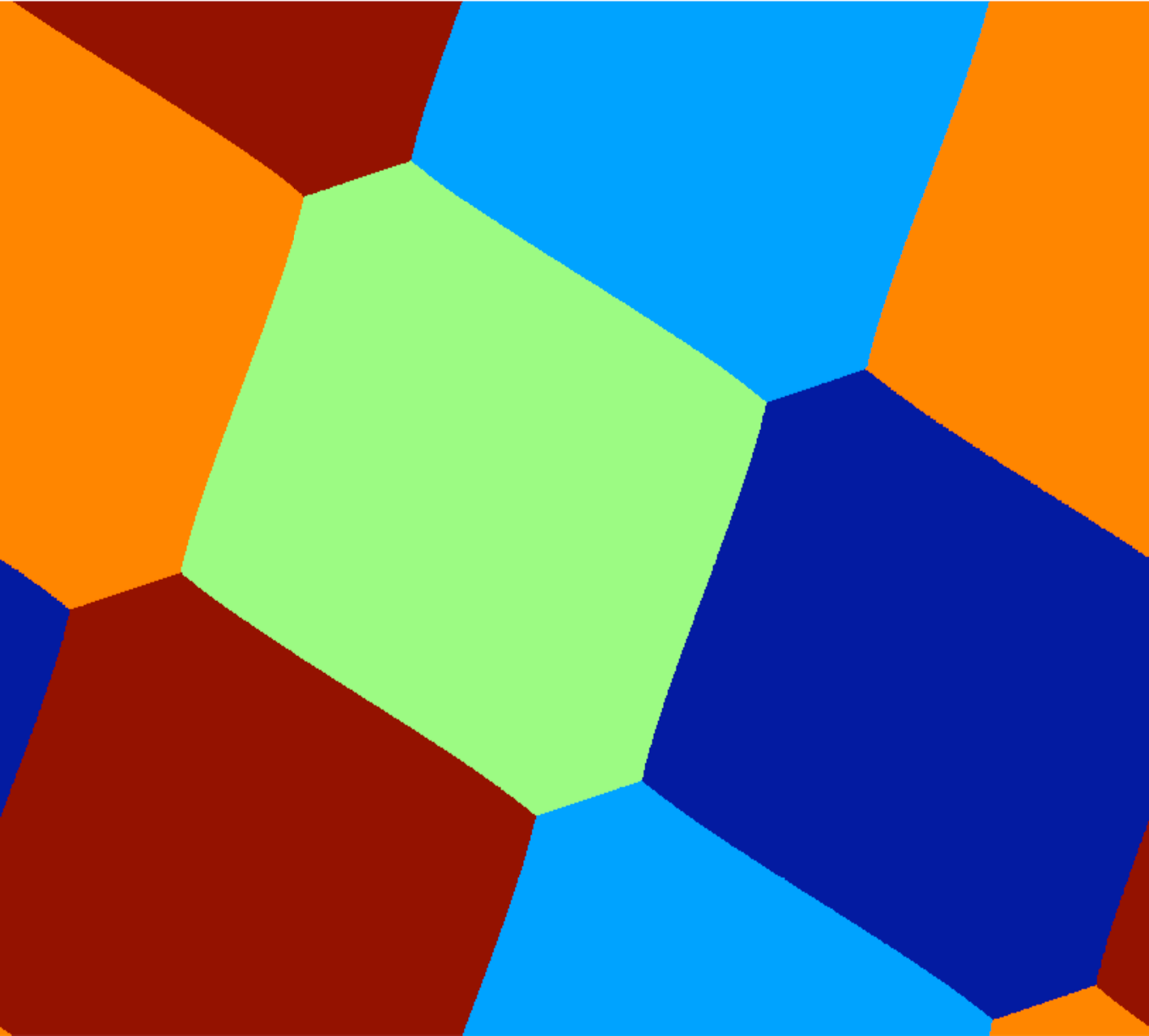} }\hfill
\subfigure[$b=0.98$\label{fig5PartTransCarre1}]{\includegraphics[width=3.4cm]{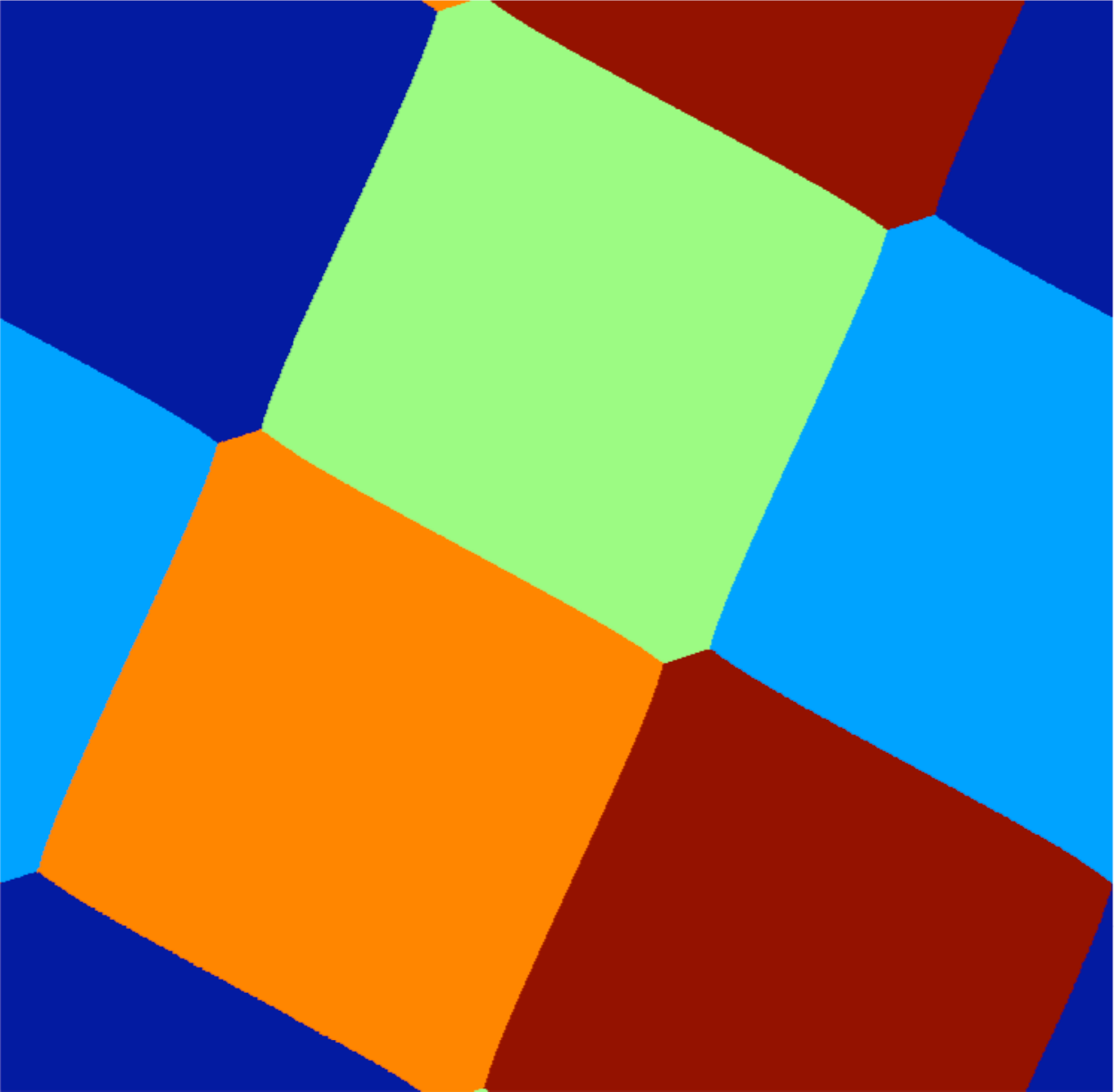}}\hfill
\subfigure[$b=0.99$\label{fig5PartTransCarre2}]{\includegraphics[width=3.4cm]{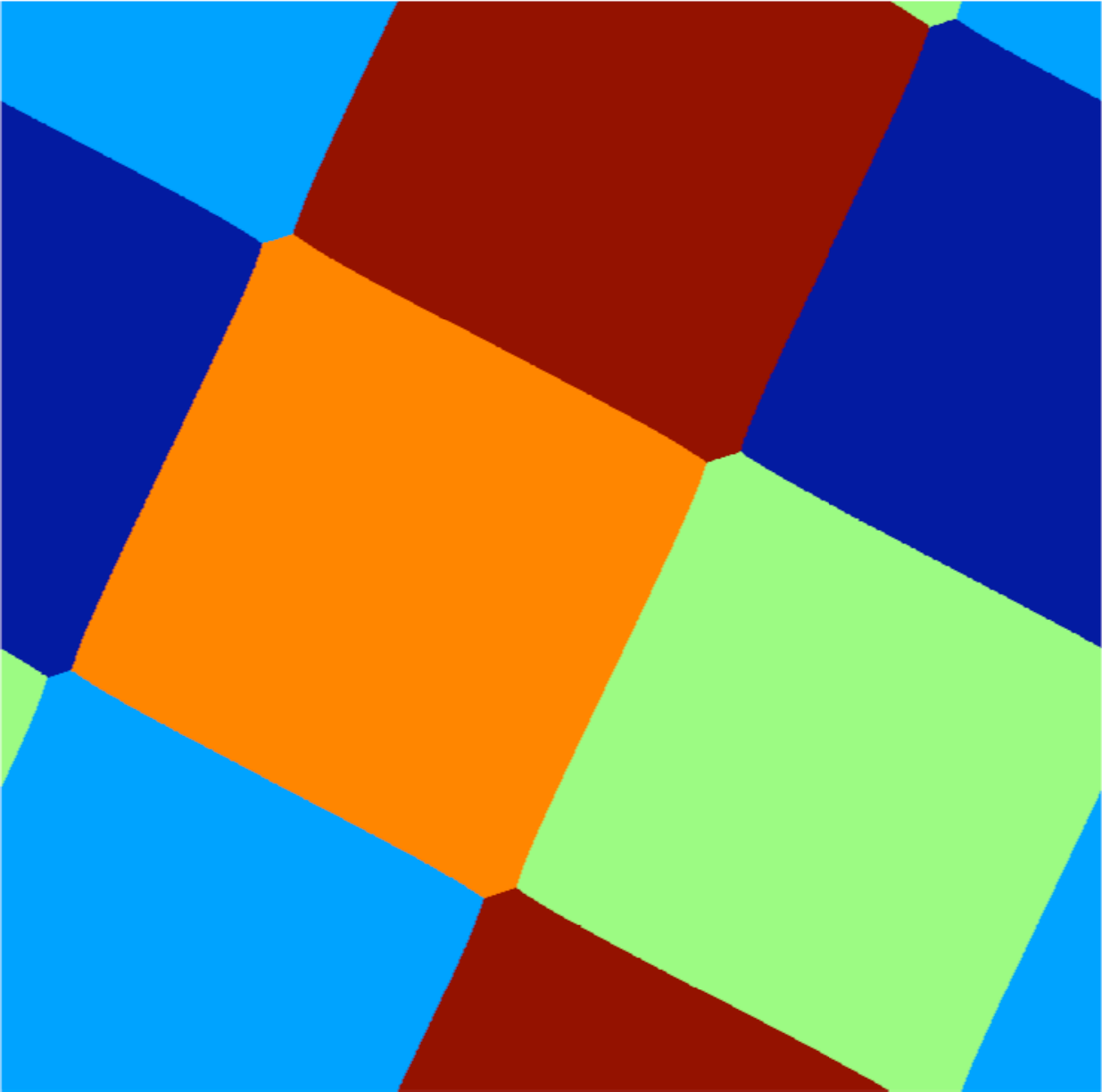}}\hfill
\subfigure[$b=1$\label{fig5PartTransCarre3}]{\includegraphics[width=3.4cm]{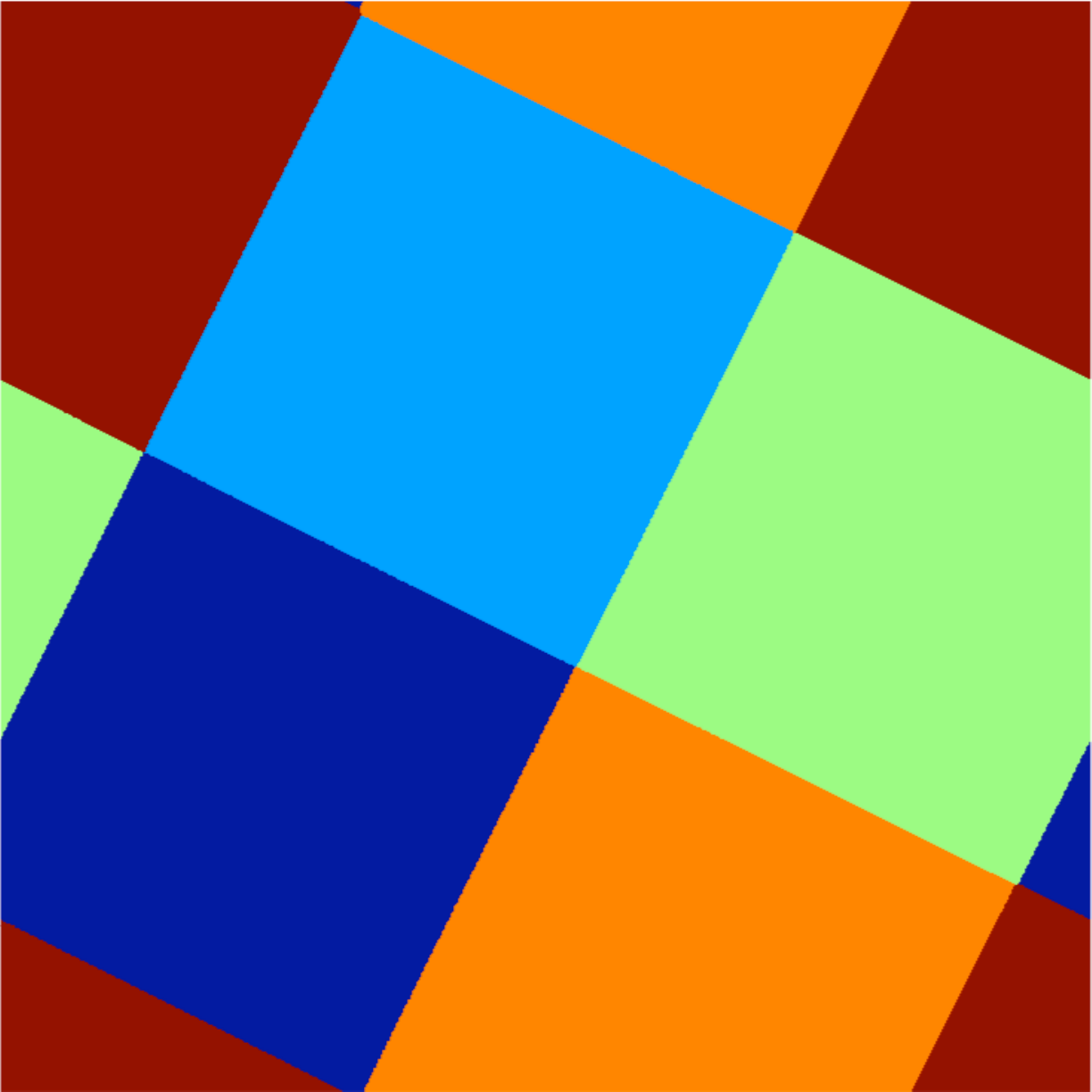}}
\caption{$5$-partitions for some values of $b\,$.
\label{fig5PartTrans}}
\end{center}
\end{figure}

We have conjectured that $b_5$ equals $\bC_5=1/\sqrt{6}\simeq0.408\,$. The first images of Figure \ref{fig5PartTrans} present the best candidates obtained numerically when $b$ is close to $\bC_{5}\,$. They seem to support the conjecture. Furthermore, for $b$ slightly larger than $\bC_5\,$, minimal partitions seem to be hexagonal, with domains close to the rectangles appearing in the partition  on Figure \ref{figPartk10Nod}.

For $b$ between $1/\sqrt{6}$ and $1\,$, minimal partitions appear to be hexagonal. However, pairs of singular points in the boundary of these hexagonal partitions seem to merge when $b$ approaches $1\,$, and for $b=1\,$, the best candidate produced by the algorithm is a partition of $\Tor(1,1)$ into five equal squares. This process is shown on Figures \ref{fig5PartTransCarre1}--\ref{fig5PartTransCarre3}. We obtain again an upper bound for $\mathfrak{L}_5(\Tor(1,b))$ as a function of $b\,$, plotted on Figure \ref{figfigk5.figVP}.

\begin{figure}[h!]\begin{center}
\includegraphics[height=6cm]{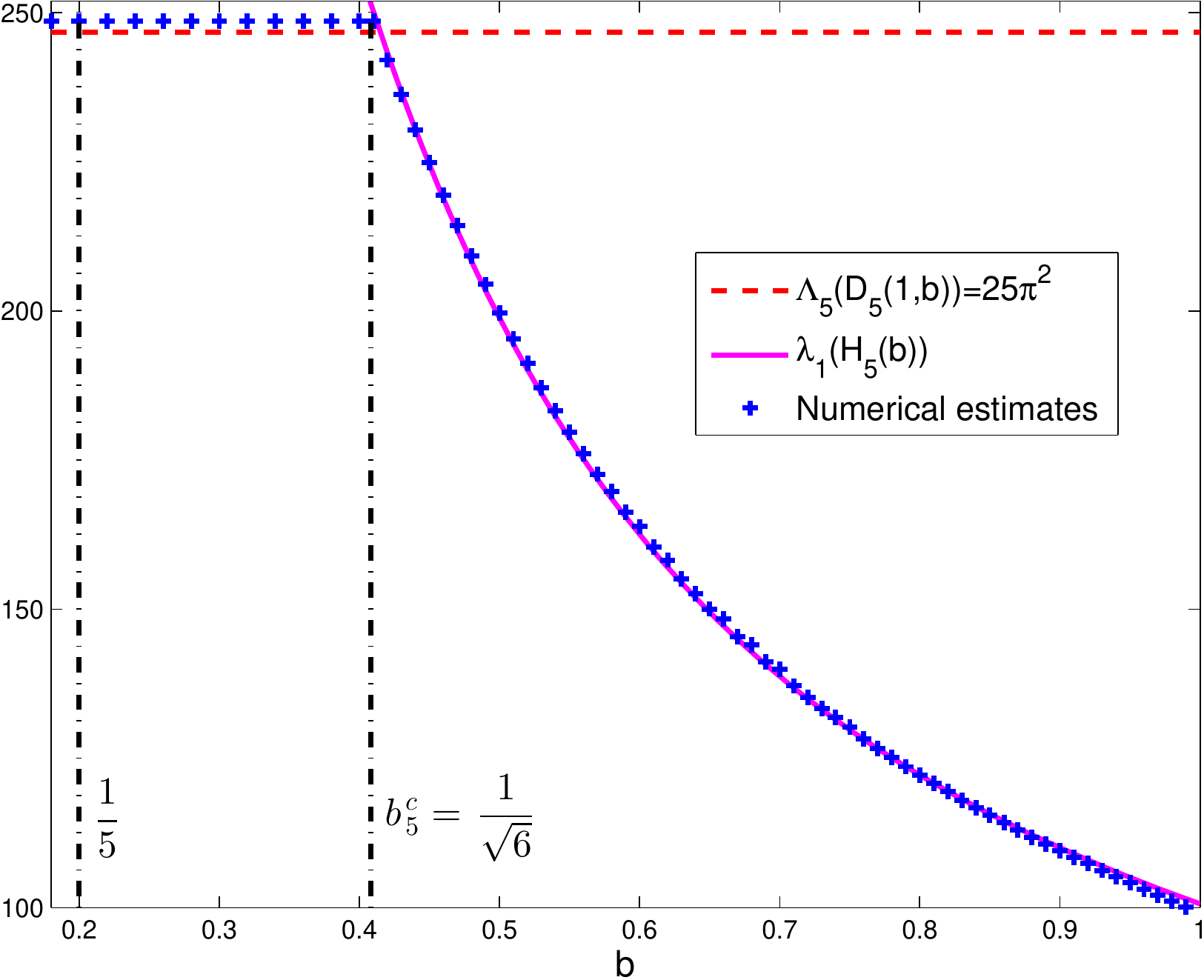} 
\caption{Upper bounds of $\mathfrak L_{5}(\Tor(1,b))$ for $b\in \{j/100\,;\, j=18\,,\,\dots\,,\,100\}\,$.\label{figfigk5.figVP}}
\end{center}\end{figure}

\section{Examples of partitions}
\label{secTilings}

\subsection{Tilings of $\Tor(1,b)$}

The results of Section \ref{secNumerics} suggest that, at least for some values of $k$ and $b\,$, the domains of minimal $k$-partitions of $\Tor(1,b)$ are isometric polygons. In fact, except when $k=5$ and $b=1\,$, these polygons seem to be hexagons. On the other hand, according to Theorem \ref{thmMinPartReg}, any minimal partition satisfies the equal angle meeting property.

This suggests the existence of partitions with a low energy, possibly minimal, that are tilings of  the torus $\Tor(1,b)$ by $k$ identical hexagons satisfying the equal angle meeting property. Let us note that this last property is equivalent to the fact that the interior angle at each vertex of the hexagon is $2\pi/3\,$. Finding these tilings is a purely geometrical problem. In this section, we prove Theorem \ref{thmExkPart} by constructing families of tilings, depending on $k$ and $b\,$, which seem close to the partitions obtained in Section \ref{secNumerics}. We then compute the energy of these tilings, and compare it to the numerical results of Section \ref{secNumerics}.

\subsection{Construction of the tilings}

	\subsubsection{Tilings of the plane}
	\label{subsubsecTilingPlane}
	
To study tilings of the torus $\Tor(1,b)$ by hexagons, it will be useful to consider tilings of the plane. As in Section \ref{subsecProofOdd}, we consider the natural projection map
\[\begin{array}{cccc}
	\Pi_{\infty}: & \RR^2& \rightarrow & \Tor(1,b)\\
	& (x,y) & \mapsto & (x\mbox{ mod }1,y\mbox{ mod }b)\,.\\
\end{array}\]
We denote by $(\mathbf{e}_1,\mathbf{e}_2)$ the canonical basis of $\RR^2\,$, i.e. $\mathbf{e}_1=(1,0)$ and $\mathbf{e}_2=(0,1)\,$.  
	
Let us consider a strong and regular $k$-partition $\mathcal{D}=\{D_1\,,\,\dots\,,\,D_k\}$ of the torus $\Tor(1,b)\,$, such that all the $D_i$'s are isometric to an hexagon that we denote by $\Hex\,$. Let us note that, since $\mathcal{D}$ is strong, the area of $\Hex$ is $b/k$ and, since $\mathcal{D}$ satisfies the equal angle meeting property, all the interior angles of $\Hex$ are $2\pi/3\,$. Let us then consider, for any $i\in\{1\,,\,\dots\,,\,k\}\,$, the open set $\Pi^{-1}(D_i)\,$. It has an infinite number of connected components, each one being isometric to $\Hex\,$. The family of all the connected components of all the sets $\Pi^{-1}(D_i)\,$, for $i\in\{1,\dots,k\}\,$, is a tiling of the plane $\RR^2$ by the hexagon $\Hex\,$. This tiling is invariant under the translations associated with the vectors $\mathbf{e}_1$ and $b\mathbf{e}_2\,$.
	
We can see that, conversely, the image by $\Pi$ of a tiling $\mathcal{T}$ of $\RR^2$ by an hexagon $\Hex$ is a regular $k$-partition of $\Tor(1,b)$ into domains isometric to $\Hex$ and satisfies the equal angle meeting property if the following conditions are verified by $\mathcal{T}\,$.
	\begin{enumerate}[i.]
		\item $\mathcal{T}$ is invariant under the translations associated with the vectors $\mathbf{e}_1$ and $b\mathbf{e}_2\,$.
		\item The area of $\Hex$ is $\frac{b}{k}\,$.
		\item All the interior angles of $\Hex$ are $2\pi/3\,$.
	\end{enumerate} 
	We have therefore reformulated the original problem. We now look for the tilings of $\RR^2$ that satisfy properties i--iii, and, if possible, for an algorithm to construct those tilings.
	
\subsubsection{Change of basis} \label{subsubsecBasis}
	
Let $\mathcal{T}$ be an hexagonal tiling of $\RR^2$ satisfying properties i--iii above. The following definition will be useful to describe $\mathcal{T}\,$.
	
\begin{defin} \label{defAdaptedBasis} We say that a basis of $\RR^2$ is \emph{$\mathcal{T}$-adapted} if its vectors connect the center of a tiling domain to the centers of two neighboring domains, with these two neighboring domains having a common edge (see Figure \ref{figTilingBasis}).
\end{defin}
Let us now denote by $(\mathbf{u}_1$,$\mathbf{u}_2)$ a $\mathcal{T}$-adapted basis.

\begin{figure}[h!t]
\begin{center}
\subfigure[$3$-partition of a torus with an adapted basis.\label{figTilingBasis}]{\includegraphics[width=4cm]{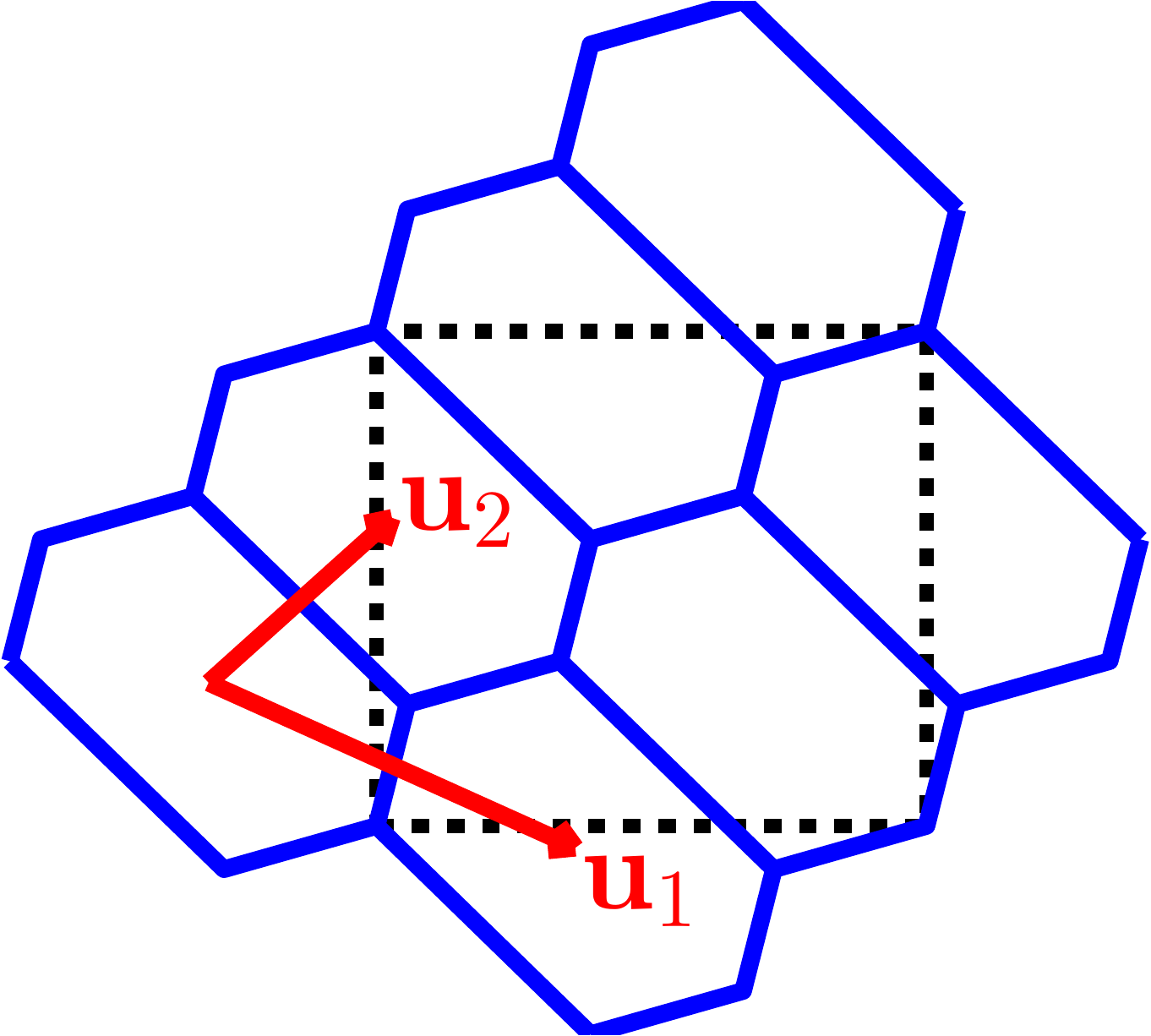}}
	\hfill
\subfigure[Area of the tiling domain.\label{figArea}]{\includegraphics[width=4cm]{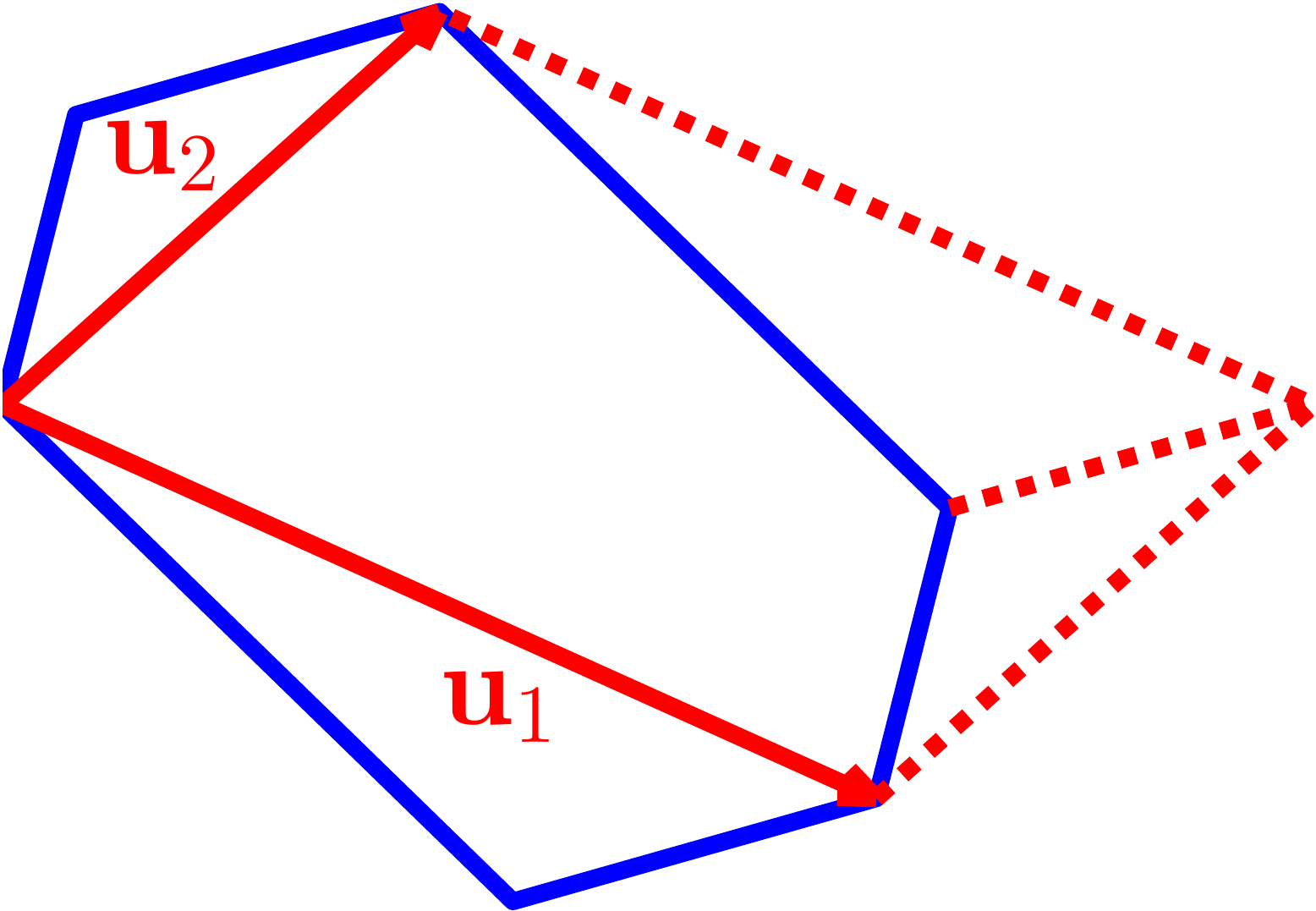}}
	\hfill
\subfigure[Construction of the tiling domain.\label{figTilingFermat}]{\includegraphics[width=4cm]{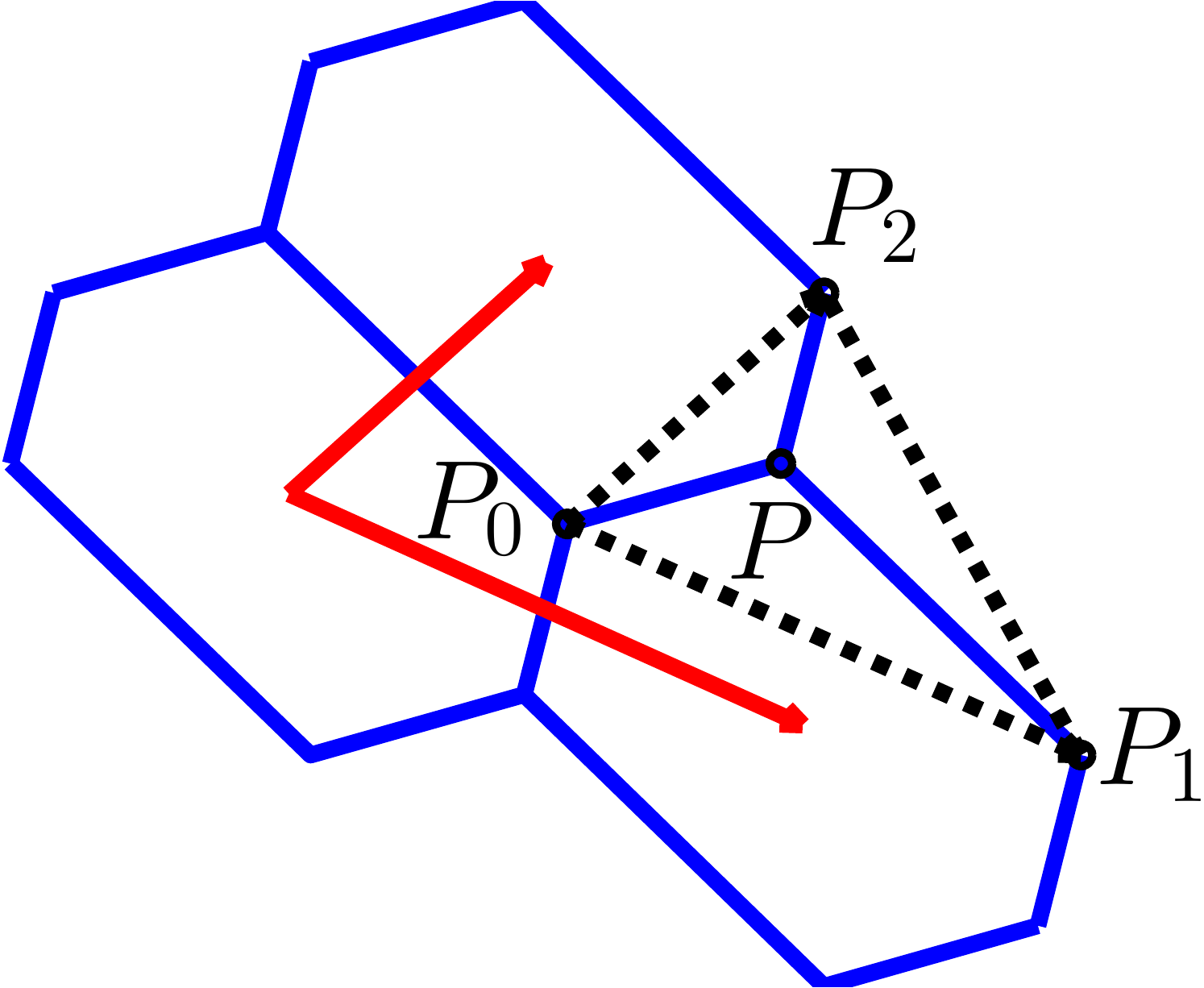}}
\caption{Hexagonal tiling.}
\end{center}
\end{figure}	

\begin{lem} \label{lemBasis}
There exists a $2\times 2$ matrix $V=(v_{i,j})$ with integer coefficients such that
\begin{equation*}
	\begin{cases}
		\mathbf{u}_1=\displaystyle\frac{v_{1,1}}{k}\mathbf{e}_1+\frac{v_{2,1}}{k}(b\mathbf{e}_2)\,,\\[8pt]
		\mathbf{u}_2=\displaystyle\frac{v_{1,2}}{k}\mathbf{e}_1+\frac{v_{2,2}}{k}(b\mathbf{e}_2)\,,
	\end{cases}
\end{equation*}
and $\det V=\pm k\,$. 
\end{lem}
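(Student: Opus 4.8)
The plan is to exploit the two distinct lattice structures present in the problem: the coarse lattice $\Gamma_0 = \mathbb{Z}\mathbf{e}_1 \oplus \mathbb{Z}(b\mathbf{e}_2)$ coming from the torus $\Tor(1,b)$, and the finer lattice $\Gamma = \mathbb{Z}\mathbf{u}_1 \oplus \mathbb{Z}\mathbf{u}_2$ of centers of the tiling domains. The key observation is that $\mathcal{T}$-invariance under the translations $\mathbf{e}_1$ and $b\mathbf{e}_2$ forces $\Gamma_0$ to be a sublattice of $\Gamma$. Indeed, condition (i) says that translation by $\mathbf{e}_1$ (resp.\ $b\mathbf{e}_2$) maps $\mathcal{T}$ to itself, hence sends the center of any domain to the center of another domain; since all centers lie in the single $\Gamma$-orbit of the center of $D_1$'s lift, these translations map $\Gamma$ into $\Gamma$, so $\mathbf{e}_1, b\mathbf{e}_2 \in \Gamma$. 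First I would make this inclusion $\Gamma_0 \subset \Gamma$ precise.

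Once $\Gamma_0 \subset \Gamma$ is established, I would write $\mathbf{e}_1$ and $b\mathbf{e}_2$ as integer combinations of the adapted basis $(\mathbf{u}_1,\mathbf{u}_2)$: there is an integer matrix $W$ with $\mathbf{e}_1 = w_{1,1}\mathbf{u}_1 + w_{2,1}\mathbf{u}_2$ and $b\mathbf{e}_2 = w_{1,2}\mathbf{u}_1 + w_{2,2}\mathbf{u}_2$. The matrix $V$ sought in the statement is precisely (up to transpose/labeling conventions) the inverse relation, expressing the $\mathbf{u}_i$ in terms of $\mathbf{e}_1, b\mathbf{e}_2$. So I would set $V = k\,W^{-1}$, with the factor $k$ inserted because the centers of the hexagons are spaced at $\Gamma$, which is $k$ times denser than $\Gamma_0$. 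To see that $V$ has integer entries, I would use the index formula $[\Gamma : \Gamma_0] = |\det W|$, and the fact that $\Gamma/\Gamma_0$ has exactly $k$ elements — this is condition (ii) combined with a volume count, which is where the area hypothesis enters decisively.

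The volume count is the crux. The covolume of $\Gamma$ is the area of a fundamental domain, which equals the area of a single tiling hexagon, namely $b/k$ by condition (ii). The covolume of $\Gamma_0$ is $\det(\mathbf{e}_1, b\mathbf{e}_2) = b$. Hence
\begin{equation*}
[\Gamma : \Gamma_0] = \frac{\mathrm{covol}(\Gamma_0)}{\mathrm{covol}(\Gamma)} = \frac{b}{b/k} = k\,,
\end{equation*}
so $|\det W| = k$. I would then define $V$ by the change-of-basis equations in the statement, namely by $\mathbf{u}_j = \tfrac{1}{k}\bigl(v_{1,j}\mathbf{e}_1 + v_{2,j}(b\mathbf{e}_2)\bigr)$, which is the same as $k\,\mathbf{u}_j \in \Gamma_0$. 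Integrality of the $v_{i,j}$ is then equivalent to $k\,\mathbf{u}_j \in \Gamma_0$ for each $j$, which follows because $k$ annihilates the quotient group $\Gamma/\Gamma_0$ (a group of order $k$): multiplying any element of $\Gamma$ by $k = |\Gamma/\Gamma_0|$ lands it in $\Gamma_0$ by Lagrange's theorem. Finally, taking determinants of the relation $V = k\,W^{-1}$ gives $\det V = k^2/\det W = \pm k^2/k = \pm k$, completing the proof.

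The main obstacle I anticipate is the clean identification of the hexagon area with the covolume of the center lattice $\Gamma$. This requires knowing that the centers form a genuine lattice (as opposed to merely a periodic set with a larger unit cell), i.e.\ that each fundamental domain of $\Gamma$ contains exactly one hexagon and that the tiling is \emph{simply transitive} under $\Gamma$. This is essentially guaranteed by the definition of a $\mathcal{T}$-adapted basis — the vectors connecting centers of edge-adjacent domains — but one should verify that two such vectors indeed generate the full translation lattice of the tiling rather than a proper sublattice. I would treat this as a geometric lemma about the adapted basis, or cite Figure~\ref{figTilingBasis} and Definition~\ref{defAdaptedBasis} to justify it; everything else is bookkeeping with lattice indices and determinants.
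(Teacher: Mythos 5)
Your proof is correct and is essentially the paper's own argument in lattice-theoretic dress: your integer matrix $W$ is the paper's $S$, your covolume ratio $[\Gamma:\Gamma_0]=b/(b/k)=k$ is exactly the paper's computation $\det U\,\det S=b$ combined with $|\det U|=b/k$ (area of the hexagon), and your $V=k\,W^{-1}$ is the paper's definition of $V$, the only cosmetic difference being that you justify integrality via Lagrange's theorem on the quotient group $\Gamma/\Gamma_0$ where the paper implicitly uses the adjugate formula for $S^{-1}$. The geometric point you flag at the end (that the hexagon is a genuine fundamental domain of the lattice generated by the adapted basis) is treated as evident in the paper as well, so you are at the same level of rigor.
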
   
	
\begin{proof} 
Since $\mathcal{T}$ is invariant under the translations associated with the vectors $\mathbf{e}_1$ and $b\mathbf{e}_2\,$, there exist integers $s_{1,1}$, $s_{2,1}$, $s_{1,2}$ and $s_{2,2}$ such that
\begin{equation*}
	\begin{cases}
		\mathbf{e}_1=s_{1,1}\mathbf{u}_1+s_{2,1}\mathbf{u}_2\,;\\
		b\mathbf{e}_2=s_{1,2}\mathbf{u}_1+s_{2,2}\mathbf{u}_2\,.\\
	\end{cases}
\end{equation*} 
With the notation $S=(s_{i,j})\,$, $\mathbf{u}_1=(u_{1,1},u_{2,1})\,$, $\mathbf{u}_2=(u_{1,2},u_{2,2})\,$, and $U=(u_{i,j})\,$, we have
\[\left(\begin{array}{cc}
	1&0\\
	0&b\\
\end{array}\right)=US\,.\]
Thus $\det U\, \det S=b\,$. But $|\det U|$ is the area of the tiling domain $\Hex$ (see Figure \ref{figArea}). Therefore $\det U=\pm b/k$ and $\det S=\pm k\,$. This implies that the matrix $V=k S^{-1}$ has integer coefficients, and therefore has the desired properties.
\end{proof}

One can give a geometrical interpretation of the coefficients in the matrix $V\,$. Let us go back to the torus $\Tor(1,b)\,$. We assume that the matrix $V=(v_{i,j})$ is associated  to the $\mathcal{T}$-adapted basis $(\mathbf{u}_1,\mathbf{u}_2)$ (see Figure \ref{figTilingBasis} for an example). If we start from some hexagonal domain and translate it $k$ times in the $\mathbf{u}_1$ direction, it returns to its original position after turning $v_{1,1}$ times around the torus in the horizontal direction and $v_{2,1}$ times in the vertical direction. Similarly, if we translate the domain $k$ times in the $\mathbf{u}_2$ direction, it returns to its original position after turning $v_{1,2}$ times around the torus in the horizontal direction and $v_{2,2}$ times in the vertical direction. We can therefore say that the matrix $V$ describes how the hexagonal tiling $\mathcal{T}$ wraps around the torus $\Tor(1,b)\,$.
	
The following result tells us at which condition we can solve the converse problem, that is to say find a tiling associated with a given basis. 
\begin{lem}\label{lemExistTiling}
Let $(\mathbf{u}_1,\mathbf{u}_2)$ be a basis of $\RR^2\,$, such that there exists a matrix $V$ satisfying the properties of Lemma \ref{lemBasis}. Let $P_0$ be some point in $\RR^2$ and let us note $P_1=P_0+\mathbf{u}_1$ and $P_2=P_0+\mathbf{u}_2\,$. The two following statements are equivalent.
\begin{enumerate}[\upshape i.]
	\item There exists an hexagonal tiling $\mathcal{T}$ such that $(\mathbf{u}_1,\mathbf{u}_2)$ is $\mathcal{T}$-adapted.
	\item There exists a point $P$ in the interior of the triangle $P_0P_1P_2$ such that the segments $P_0P\,$, $P_1P\,$, and $P_2P$ meet with equal angles.
\end{enumerate}
\end{lem}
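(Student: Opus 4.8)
My plan is to reduce both implications to a single trigonometric identity relating the side lengths of the triangle $P_0P_1P_2$ to the geometry of its Fermat--Torricelli point. First I would pin down the shape of any admissible tiling domain. By the equal-angle meeting property every interior angle of the hexagon $\Hex$ equals $2\pi/3$, and since the tiling is invariant under the lattice $L=\mathbb{Z}\mathbf{u}_1+\mathbb{Z}\mathbf{u}_2$ (we may take it to be a lattice tiling by translates of $\Hex$), the hexagon is centrally symmetric. Such an equiangular, centrally symmetric hexagon is determined, up to an orientation angle $\theta$, by its three pairs of opposite edge lengths $l_1,l_2,l_3>0$, the six edge directions being forced to be $60^\circ$ apart. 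I would then compute the vectors carrying $\Hex$ onto its six neighbours: writing $O$ for the centre and $M_j$ for the midpoint of the $j$-th edge, central symmetry gives that the neighbour sharing edge $j$ has centre $2M_j-O$, so the neighbour vectors are $\pm\mathbf{t}_1,\pm\mathbf{t}_2,\pm(\mathbf{t}_2-\mathbf{t}_1)$ for two explicit vectors $\mathbf{t}_1,\mathbf{t}_2$ depending on $(l_1,l_2,l_3,\theta)$. That $(\mathbf{u}_1,\mathbf{u}_2)$ is $\mathcal{T}$-adapted then means exactly $\mathbf{u}_1=\mathbf{t}_1$ and $\mathbf{u}_2=\mathbf{t}_2$.

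The heart of the argument is a short computation of the three side lengths of the triangle with vertices $P_0,\ P_0+\mathbf{u}_1,\ P_0+\mathbf{u}_2$ in terms of the edge lengths. Expanding $|\mathbf{t}_1|^2$, $|\mathbf{t}_2|^2$ and $|\mathbf{t}_2-\mathbf{t}_1|^2$ I expect to obtain
\[
|\mathbf{u}_1|^2=l_2^2+l_3^2+l_2l_3,\qquad |\mathbf{u}_2|^2=l_1^2+l_3^2+l_1l_3,\qquad |\mathbf{u}_2-\mathbf{u}_1|^2=l_1^2+l_2^2+l_1l_2.
\]
These are precisely the law of cosines with an angle of $2\pi/3$, so they assert that $l_3,l_2,l_1$ are the distances, measured at mutual angles $2\pi/3$, from a single point $P$ to $P_0,P_1,P_2$ respectively. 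In other words, the point $P$ realising these distances with $120^\circ$ angles is exactly the Fermat--Torricelli point of $P_0P_1P_2$, and $l_1,l_2,l_3$ are its distances to the three vertices.

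With this identity in hand both implications are immediate. For i.$\Rightarrow$ii., an admissible tiling yields $l_1,l_2,l_3>0$; placing a point at distances $l_3,l_2,l_1$ from three points in directions $0,\,2\pi/3,\,4\pi/3$ produces a triangle with the side lengths above, hence congruent to $P_0P_1P_2$, and transporting $P$ through this congruence gives an interior point meeting $P_0,P_1,P_2$ at equal angles. For ii.$\Rightarrow$i., from an interior equal-angle point $P$ the three distances $l_3=|PP_0|$, $l_2=|PP_1|$, $l_1=|PP_2|$ are strictly positive, so they define a genuine convex equiangular centrally symmetric hexagon $\Hex$; the congruence of the two side-length triangles lets me fix the orientation $\theta$ (and, if needed, a reflection) so that $\mathbf{t}_1=\mathbf{u}_1$ and $\mathbf{t}_2=\mathbf{u}_2$, and the translates of $\Hex$ under $L$ then tile $\RR^2$ with $(\mathbf{u}_1,\mathbf{u}_2)$ adapted.

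The points I expect to require the most care are threefold. First, the structural claim that an admissible tiling is really a lattice tiling by a centrally symmetric hexagon, which legitimises the parametrisation by $(l_1,l_2,l_3,\theta)$ and the neighbour-vector computation; this rests on the classical fact that a convex hexagon tiling the plane by translations must be centrally symmetric. Second, in ii.$\Rightarrow$i. one must verify that the constructed hexagon genuinely closes up and is convex with all angles $2\pi/3$ (automatic once the $l_i$ are positive) and that the orientation/reflection bookkeeping matches the \emph{ordered} basis $(\mathbf{u}_1,\mathbf{u}_2)$, not merely an unordered congruent triangle. Finally, the word \emph{interior} in ii. is exactly what rules out the degenerate cases: an angle of $P_0P_1P_2$ equal to, or larger than, $2\pi/3$ pushes the Fermat point onto a vertex, or outside, forcing some $l_i=0$ and a degenerate, non-hexagonal tile; these boundary situations must be excluded explicitly.
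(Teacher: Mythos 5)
Your argument is correct, but it follows a genuinely different and more computational route than the paper, whose proof is purely synthetic and very short. For i.$\Rightarrow$ii., the paper takes $P_0$ to be the vertex of a tile at which the two edges crossed by $\mathbf{u}_1$ and $\mathbf{u}_2$ meet; then $P_1$ and $P_2$ are themselves vertices of the tiling, and the tiling vertex $P$ joined by edges to $P_0$, $P_1$ and $P_2$ lies inside the triangle, so the equal-angle property at $P$ is literally the $2\pi/3$ meeting condition of the tiling --- no trigonometry appears. For ii.$\Rightarrow$i., the paper never constructs the hexagon explicitly: it translates the three segments $P_0P$, $P_1P$, $P_2P$ by all vectors of the lattice $\ZZ\mathbf{u}_1+\ZZ\mathbf{u}_2$ and asserts that the union of these segments bounds the desired tiling. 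Your route --- parametrizing centrally symmetric equiangular hexagons by opposite edge lengths $(l_1,l_2,l_3)$ and an orientation, computing the neighbour vectors, and matching side lengths through the law of cosines with angle $2\pi/3$ --- buys explicit formulas such as $|\mathbf{u}_1|^2=l_2^2+l_3^2+l_2l_3$, which identify the hexagon's edge lengths with the distances from the Fermat point and could even replace the geometric reconstruction via Theorem \ref{thmConstFermat} in the paper's algorithm; the price is the structural reduction you flag at the outset. On that point, note that the classical fact you invoke (a convex hexagon tiling the plane by translations is centrally symmetric) does not by itself justify the reduction, since a priori the tiles of $\mathcal{T}$ are only isometric --- not translates --- of one another; one still needs to argue (for instance, using that angles equal to $2\pi/3$ force an edge-to-edge, trivalent tiling) that neighbouring tiles are translates. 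This is not a gap relative to the paper, whose proof makes the same tacit assumption when asserting that $P_1$ and $P_2$ are vertices of the tiling, and whose converse likewise leaves unverified that the translated segments bound a tiling; your appeal to the fact that a centrally symmetric hexagon tiles the plane by its neighbour lattice plays exactly that role. Both proofs sit at a comparable level of rigour: yours is longer but quantitative, the paper's shorter and purely pictorial.
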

	
\begin{proof} Let us first consider the direct implication. We choose some domain of the tiling $\mathcal{T}$ and denote by $P_0$ the vertex that connects the sides of this domain intersected by $\mathbf{u}_1$ and $\mathbf{u}_2\,$ (see Figure \ref{figTilingFermat}). Then $P_1$ and $P_2$ are vertices of the tiling, and there is another vertex $P$ contained in the triangle $P_0P_1P_2$. The point $P$ has the desired property.\\		
Conversely, let us assume ii. After translating the three segments $P_0P\,$, $P_1P\,$, and $P_2P$ according to all the vectors of the lattice $\ZZ\mathbf{u}_1+\ZZ\mathbf{u}_2\,$, we obtain the boundary of a tiling $\mathcal{T}$ which satisfies the three properties i--iii stated at the beginning of this subsection, and the basis $(\mathbf{u}_1,\mathbf{u}_2)$ is $\mathcal{T}$-adapted. 
\end{proof}

\subsubsection{Reconstruction of the tiling domain}
We now try to determine at which condition Property ii. of Lemma \ref{lemExistTiling} is satisfied. We recall, without proof, a very classical geometrical result (see for instance \cite{Cox89}).
	
\begin{thm}\label{thmFermat}
Let $P_0\,$, $P_1\,$, and $P_2$ be three non-colinear points in $\RR^2$ (see Figure \ref{figFermat}). One of the two following situations occurs.
\begin{enumerate}[\upshape i.]
	\item If all three angles $\alpha_0\,$, $\alpha_1\,$, and $\alpha_2$ of the triangle $P_0P_1P_2$ are smaller than $\frac{2\pi}{3}\,$, there is a unique point $P$ belonging to the interior of the triangle $P_0P_1P_2$ such that the segments $P_0P\,$, $P_1P\,$ and $P_2P\,$ meet with equal angles at $P\,$. The point $P$ is called the \emph{Fermat point} of the triangle $P_0P_1P_2\,$. It is the point of minimum for the function $Q\mapsto QP_0+QP_1+QP_2\,$.
	\item If $\alpha_i \ge \frac{2\pi}{3}$ for some $i\in\{0\,,\,1\,,\,2\}$, then there is no point in the interior of  $P_0P_1P_2$ at which the segments from the vertices meet with equal angles. In that case, the function $Q\mapsto QP_0+QP_1+QP_2$ reaches its minimum at $P_i\,$.
\end{enumerate}
\end{thm}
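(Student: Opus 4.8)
The statement to prove is Theorem \ref{thmFermat}, the classical result on the Fermat point of a triangle. The plan is to treat the two cases separately, using the variational characterization as the unifying thread: in both cases the key object is the function $Q \mapsto F(Q) = QP_0 + QP_1 + QP_2$, which is strictly convex and coercive on $\RR^2$, hence attains a unique minimum. The whole theorem then reduces to locating that minimizer and computing the angles there, so I would first establish existence and uniqueness of the minimizer once and for all, and only afterwards split into the two geometric regimes.

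For the analytic preliminaries I would argue as follows. Each term $Q \mapsto QP_i$ is convex (it is a norm composed with an affine map) and strictly convex away from $P_i$; their sum $F$ is therefore convex, and strictly convex unless all three $P_i$ are colinear, which is excluded by hypothesis. Coercivity ($F(Q) \to \infty$ as $|Q| \to \infty$) is immediate, so a minimizer $P$ exists and is unique. The gradient computation is the heart of the matter: where $F$ is differentiable, namely at any $Q \notin \{P_0,P_1,P_2\}$, one has
\begin{equation*}
\nabla F(Q) = \sum_{i=0}^{2} \mathbf{n}_i(Q)\,, \qquad \mathbf{n}_i(Q) = \frac{Q-P_i}{|Q-P_i|}\,,
\end{equation*}
a sum of three unit vectors. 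The stationarity condition $\nabla F(P) = 0$ thus says that three unit vectors sum to zero, which forces them to be pairwise separated by angles of exactly $2\pi/3$; equivalently, the segments $PP_0$, $PP_1$, $PP_2$ meet with equal angles at $P$. This equivalence (three unit vectors summing to zero $\iff$ mutual angles $2\pi/3$) is an elementary but essential lemma I would state explicitly.

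It remains to decide \emph{where} this minimizer $P$ sits, and this is where the dichotomy on the angles $\alpha_0, \alpha_1, \alpha_2$ enters. In case i, all angles of the triangle are strictly less than $2\pi/3$; I would show that in this regime the minimizer lies in the open interior of the triangle and is therefore a differentiable point of $F$, so the stationarity condition above applies and yields the equal-angle property. To exclude $P$ being a vertex $P_i$, I would examine the one-sided directional derivatives of $F$ at $P_i$: the non-smooth term $QP_i$ contributes a unit of directional derivative in every direction, while the other two smooth terms contribute $-\cos$ of the half-angles; a short computation shows that if $\alpha_i < 2\pi/3$ then there is a descent direction at $P_i$, so no vertex can be the minimizer, leaving the interior as the only possibility. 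In case ii, where some $\alpha_i \ge 2\pi/3$, the same directional-derivative computation shows $F$ has no descent direction at $P_i$ (the directional derivative is nonnegative in every direction), so $P_i$ is a local and hence, by convexity, the global minimizer, and no interior equal-angle point can exist since that would give a second, distinct critical point contradicting uniqueness.

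The main obstacle, and the step deserving the most care, is the directional-derivative analysis at the vertices, since that is precisely where $F$ fails to be differentiable and where the threshold $2\pi/3$ makes its appearance. Concretely, for a unit direction $\mathbf{v}$ the one-sided derivative of $F$ at $P_i$ is $1 + \mathbf{v}\cdot(\mathbf{n}_j(P_i) + \mathbf{n}_\ell(P_i))$ for $\{j,\ell\} = \{0,1,2\}\setminus\{i\}$, and its minimum over $\mathbf{v}$ is $1 - |\mathbf{n}_j(P_i) + \mathbf{n}_\ell(P_i)|$; the sign of this quantity flips exactly as $\alpha_i$ crosses $2\pi/3$, because $|\mathbf{n}_j + \mathbf{n}_\ell| = 2\cos(\alpha_i/2)$. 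Getting this sign bookkeeping right, and connecting it cleanly to the convexity argument so that a local minimizer is automatically global, is the crux; everything else is a matter of organizing the convex-analysis facts. I would close by remarking that the minimum-sum interpretation in both cases is then a free consequence, since $P$ (respectively $P_i$) was identified as the minimizer of $F$ from the outset.
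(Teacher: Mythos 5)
The paper does not prove this statement at all: it is introduced with ``We recall, without proof, a very classical geometrical result'' and a citation to Coxeter, so there is no internal proof to compare yours against, and your proposal has to stand on its own. It essentially does. The convex-analysis skeleton is correct: $F(Q)=QP_0+QP_1+QP_2$ is strictly convex because, the $P_i$ being non-colinear, every line omits at least one $P_i$ and the corresponding term is strictly convex along that line; coercivity gives existence and strict convexity gives uniqueness of the minimizer; at a non-vertex point $\nabla F$ is a sum of three unit vectors, and three unit vectors sum to zero if and only if their mutual angles are $2\pi/3$. The vertex analysis is also right: the one-sided derivative at $P_i$ in a unit direction $\mathbf{v}$ is $1+\mathbf{v}\cdot(\mathbf{n}_j(P_i)+\mathbf{n}_\ell(P_i))$, whose minimum over $\mathbf{v}$ is $1-2\cos(\alpha_i/2)$, negative exactly when $\alpha_i<2\pi/3$; and for a convex function, nonnegative one-sided derivatives in every direction at $P_i$ do make $P_i$ the global minimizer, which settles case ii together with your uniqueness argument excluding a second (interior) critical point.

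One step is stated too loosely in case i: ruling out the three vertices does not ``leave the interior as the only possibility'' --- it leaves all of $\RR^2$ minus three points, and a priori the minimizer could sit outside the triangle or on an edge. The gap closes immediately from the stationarity condition you already have: writing $\lambda_i=1/|PP_i|>0$, the equation $\sum_{i}(P-P_i)/|P-P_i|=0$ rearranges to
\begin{equation*}
P=\frac{\lambda_0 P_0+\lambda_1 P_1+\lambda_2 P_2}{\lambda_0+\lambda_1+\lambda_2}\,,
\end{equation*}
so the critical point is a strictly positive convex combination of the vertices, hence lies in the open interior of the triangle. With that one line added, your proof is complete. For what it is worth, the proof the paper points to (Coxeter) is the classical synthetic one, via a rotation by $\pi/3$ about a vertex --- the same circle of ideas as the equilateral-triangle construction the paper records as Theorem \ref{thmConstFermat} and then uses in its algorithm. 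Your variational route trades that constructive geometry for clean uniqueness and the minimization characterization; it proves the statement but does not by itself yield the explicit construction of $P$ that the authors need later.
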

	
\begin{figure}
\begin{center}
	\subfigure[Equal angle property.\label{figFermat}]{\includegraphics[width=5cm]{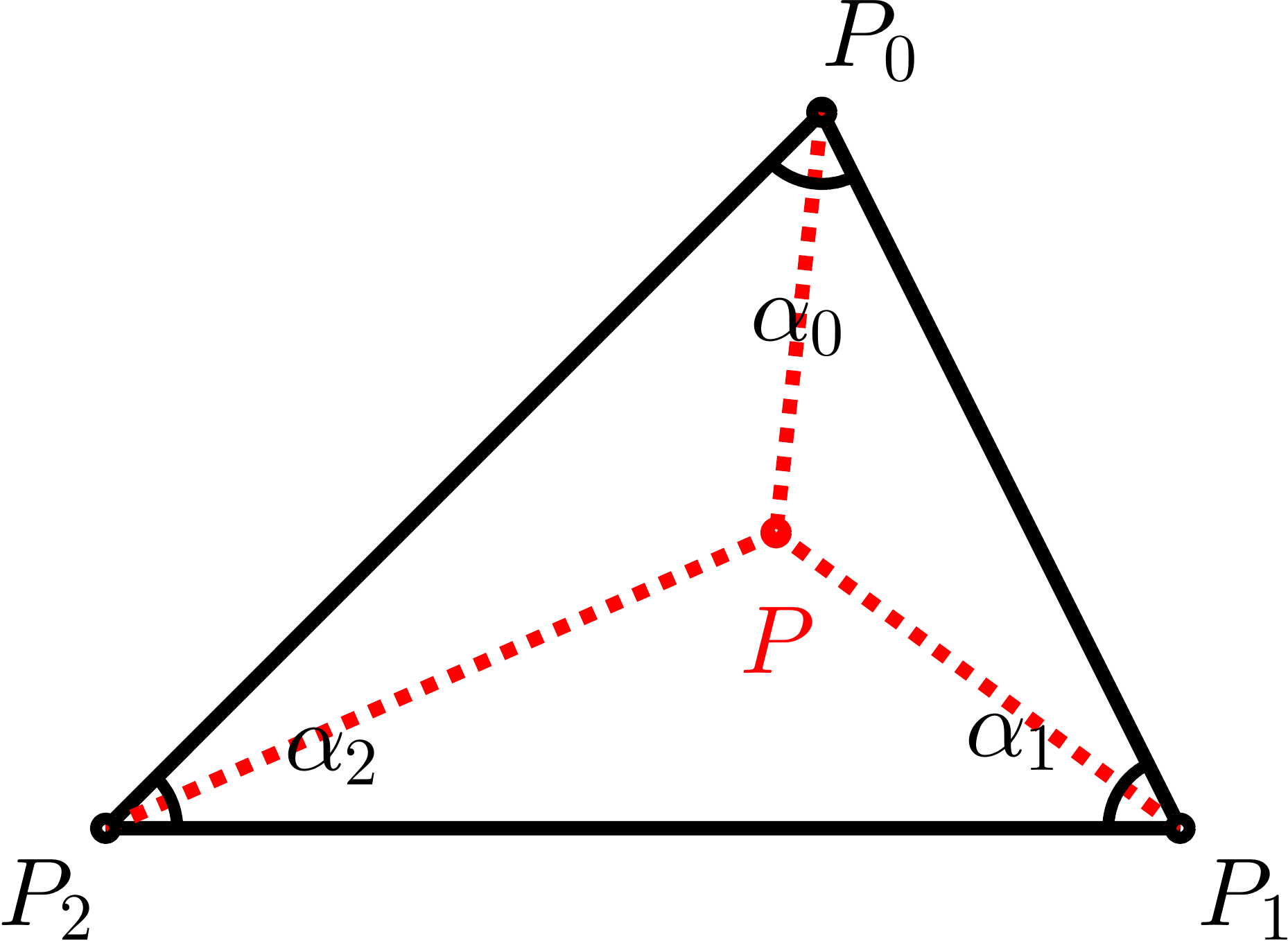}}
	\hspace{2cm}
	\subfigure[Construction of the Fermat point.\label{figConstFermat}]{\includegraphics[width=5cm]{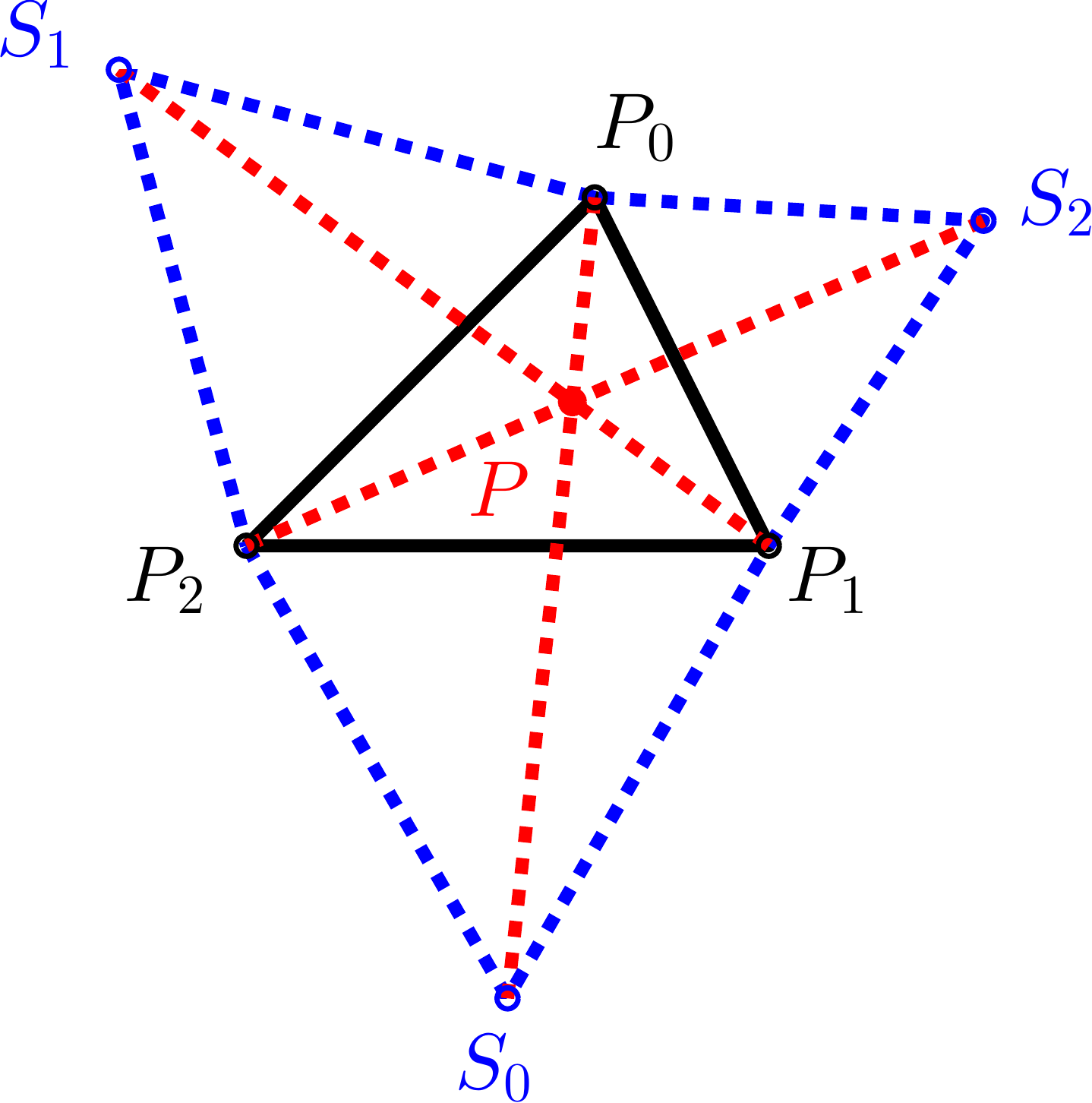}}
	\caption{Fermat point of a triangle.\label{figFermatPoint}}
\end{center}
\end{figure}
	
The following result gives an easy criterion for the existence of a Fermat point.
	
\begin{lem}\label{lemCondFermat} 
Let $\mathbf{u}_1$ and $\mathbf{u}_2$ be two non-zero vectors in $\RR^2\,$. Let $P_0$ be a point in $\RR^2\,$. We set $P_1=P_0+\mathbf{u}_1$ and $P_2=P_0+\mathbf{u}_2\,$. The triangle $P_0P_1P_2$ has a Fermat point if, and only if, 
\begin{equation}\label{eqCondFermat1}
	p\in\Big(-\frac{1}{2},\frac{1}{2}\Big]  
\end{equation}
or
\begin{equation}\label{eqCondFermat2}
	p\in \Big(\frac{1}{2},1\Big) \quad\mbox{ and }\quad p-\sqrt{\frac{1-p^2}{3}}<r<\frac{1}{p-\sqrt{\frac{1-p^2}{3}}}\,,
\end{equation}
with $p=\frac{\mathbf{u}_1\cdot\mathbf{u}_2}{\|\mathbf{u}_1\|\|\mathbf{u}_2\|}$ and $r=\frac{\|\mathbf{u}_1\|}{\|\mathbf{u}_2\|}\,$.
\end{lem}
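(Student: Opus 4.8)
The plan is to reduce the statement to the angle criterion of Theorem~\ref{thmFermat} and then translate ``all three interior angles are smaller than $2\pi/3$'' into inequalities on $p$ and $r$. Write $\theta$ for the angle of the triangle $P_0P_1P_2$ at $P_0$; since the two sides emanating from $P_0$ are $\mathbf{u}_1$ and $\mathbf{u}_2\,$, we have $\cos\theta=p\,$, so $\theta=\arccos p\,$. By Theorem~\ref{thmFermat}, a Fermat point exists if and only if each of the three interior angles is strictly less than $2\pi/3\,$. The angle at $P_0$ satisfies this precisely when $p>-\tfrac12\,$; conversely, when $p\le-\tfrac12$ the angle at $P_0$ is at least $2\pi/3$ and no Fermat point exists. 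It remains to analyse the angles at $P_1$ and $P_2$ as a function of $r=\|\mathbf{u}_1\|/\|\mathbf{u}_2\|\,$.

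First I would treat the range $p\in(-\tfrac12,\tfrac12]\,$, corresponding to $\theta\in[\tfrac{\pi}{3},\tfrac{2\pi}{3})\,$. Here the angle at $P_0$ is at least $\pi/3\,$, so the remaining two angles are positive and sum to $\pi-\theta\le 2\pi/3\,$; each is therefore strictly below $2\pi/3\,$, no matter what $r$ is. Thus the triangle always has a Fermat point in this range, which is exactly condition \eqref{eqCondFermat1}.

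The substantive case is $p\in(\tfrac12,1)\,$, where $\theta<\pi/3$ makes the angle at $P_0$ harmless, but the two remaining angles sum to $\pi-\theta>2\pi/3$ and one of them may reach $2\pi/3\,$. Using the law of sines in the form $\sin(\angle P_1)/\sin(\angle P_2)=\|\mathbf{u}_2\|/\|\mathbf{u}_1\|=1/r$ together with $\angle P_1+\angle P_2=\pi-\theta\,$, I would check that $\angle P_1$ is a strictly decreasing function of $r$ while $\angle P_2$ is strictly increasing. Setting $\angle P_1=2\pi/3$ (so $\angle P_2=\pi/3-\theta$) and applying the law of sines gives the lower critical ratio $r=\sin(\pi/3-\theta)/\sin(2\pi/3)=p-\sqrt{(1-p^2)/3}$ after expanding with $\cos\theta=p$ and $\sin\theta=\sqrt{1-p^2}\,$; by the symmetry $\mathbf{u}_1\leftrightarrow\mathbf{u}_2\,$, $r\mapsto 1/r\,$, setting $\angle P_2=2\pi/3$ yields the upper critical ratio $1/\big(p-\sqrt{(1-p^2)/3}\big)\,$. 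By the monotonicity, both angles stay below $2\pi/3$ precisely on the open interval between these values, which is \eqref{eqCondFermat2}; outside it one angle is $\ge 2\pi/3$ and the Fermat point disappears. One checks $p-\sqrt{(1-p^2)/3}\in(0,1)$ for $p\in(\tfrac12,1)\,$, so the interval is well defined and contains $r=1\,$. I expect the main obstacle to be the bookkeeping in this last case: establishing the monotonicity of the angles in $r$ cleanly, reducing the law-of-sines expression to the stated closed form, and correctly matching the strict and non-strict endpoints so that the union of \eqref{eqCondFermat1} and \eqref{eqCondFermat2} captures exactly the Fermat-point regime.
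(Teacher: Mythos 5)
Your argument is correct, and it shares the paper's starting point: both proofs invoke Theorem \ref{thmFermat} to reduce the existence of a Fermat point to the condition that all three interior angles of $P_0P_1P_2$ are strictly smaller than $2\pi/3$, and then express this condition in terms of $p$ and $r$. The execution, however, is genuinely different. The paper's proof (which is only a sketch) proceeds uniformly and computationally: write $\cos\alpha_i$, for $i\in\{0,1,2\}$, as explicit functions of $p$ and $r$ via the law of cosines, impose $\cos\alpha_i\in(-\tfrac12,1)$ for all $i$, and verify by algebraic manipulation that this system is equivalent to the alternative \eqref{eqCondFermat1} or \eqref{eqCondFermat2}. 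You instead argue synthetically: the angle at $P_0$ forces $p>-\tfrac12$; for $p\in(-\tfrac12,\tfrac12]$ the angle-sum bound $\angle P_1+\angle P_2=\pi-\theta\le 2\pi/3$ settles the claim with no computation at all; and for $p\in(\tfrac12,1)$ you locate the endpoints of \eqref{eqCondFermat2} by setting $\angle P_1=2\pi/3$ in the law of sines, which gives $r=\sin(\pi/3-\theta)/\sin(2\pi/3)=p-\sqrt{(1-p^2)/3}$, and you obtain the other endpoint from the swap symmetry $\mathbf{u}_1\leftrightarrow\mathbf{u}_2$, $r\mapsto 1/r$ (legitimate since $p-\sqrt{(1-p^2)/3}>0$ exactly when $p>\tfrac12$, as you note). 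Your route is the more illuminating one: it explains where the critical ratios come from and makes the strict/non-strict endpoint bookkeeping transparent, whereas the paper's is a mechanical inequality check. The one ingredient you should pin down in writing is the monotonicity of the angles in $r$, which carries your case $p\in(\tfrac12,1)$; it follows in one line from the fact that, for fixed angle sum, the map $\beta\mapsto \sin\beta/\sin(\pi-\theta-\beta)$ is strictly increasing on $(0,\pi-\theta)$, since
\begin{equation*}
\frac{d}{d\beta}\,\log\frac{\sin\beta}{\sin(\pi-\theta-\beta)}=\cot\beta+\cot(\pi-\theta-\beta)=\frac{\sin(\pi-\theta)}{\sin\beta\,\sin(\pi-\theta-\beta)}>0\,.
\end{equation*}
With that line added, your proof is a complete and somewhat more structured substitute for the computation the paper leaves to the reader.
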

\begin{proof}
The proof is a rather straightforward computation, and we merely indicate its steps. We express $\cos\alpha_i\,$, for $i \in \{0\,,\,1\,,\,2\}\,$, as a function of $p$ and $r$. Writing down the condition
\begin{equation*}
	\forall i \in\{0\,,\,1\,,\,2\}\,,\quad\cos\alpha_i\in (-\tfrac{1}{2},1)\,,
\end{equation*}
we show that it is equivalent to the alternative \eqref{eqCondFermat1} or \eqref{eqCondFermat2}.
\end{proof}
	
\subsubsection{Algorithm}	
The above results give us an algorithm to build a tiling of the torus $\Tor(1,b)$ by $k$ hexagons:
\begin{itemize}
	\item choose a $2\times 2$ matrix $V$ with integer coefficients such that $\det V=\pm k\,$;
	\item check whether the triangle generated by the vectors $\mathbf{u}_1$ and $\mathbf{u}_2$ (defined from $V$ as in Lemma  \ref{lemBasis}) has a Fermat point, using Lemma \ref{lemCondFermat};
	\item if the triangle has a Fermat point, compute its coordinates;
	\item use the coordinates of the Fermat point to build the tiling domain.
\end{itemize}
	
Let us describe in more details how we perform the last two steps. We first recall a geometric construction of the Fermat point (see for instance \cite{Cox89}).
\begin{thm}\label{thmConstFermat}
Let $P_0P_1P_2$ be a triangle in $\RR^2$ such that each of the angles $\alpha_i\,$, $i\in\{0\,,\,1\,,\,2\}\,$, is smaller than $\frac{2\pi}{3}\,$. Let us consider the three equilateral triangles lying outside of $P_0P_1P_2$ and having one edge in common with it. For each of these triangles, let us consider the line passing through the outer vertex and the vertex of $P_0P_1P_2$ that does not belong to it (see Figure \ref{figConstFermat}). The three lines meet at the Fermat point $P$.
\end{thm}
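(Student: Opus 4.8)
The plan is to prove Theorem~\ref{thmConstFermat} by exhibiting, for each of the three equilateral triangles, a circumcircle through which the relevant line passes, and then using the inscribed angle theorem to pin down the angles that any point on this line subtends. Concretely, I would fix one edge, say $P_0P_1$, and let $E_{01}$ denote the apex of the equilateral triangle erected outward on that edge. The claim to isolate is that the line $P_2E_{01}$ passes through the Fermat point; I will show this by a circumcircle argument applied to all three such lines and checking they are concurrent at a point with the equal-angle property.

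First I would recall the circumscribed-circle characterization of the Fermat point. For each outward equilateral triangle, say the one on edge $P_0P_1$ with apex $E_{01}$, consider its circumscribed circle $\mathcal{C}_{01}$, which also passes through $P_0$ and $P_1$. I would argue that the Fermat point $P$ lies on $\mathcal{C}_{01}$: indeed, by the inscribed angle theorem, every point $Q$ on the arc of $\mathcal{C}_{01}$ on the opposite side of chord $P_0P_1$ from $E_{01}$ satisfies $\angle P_0QP_1 = \pi - \angle P_0E_{01}P_1 = \pi - \frac{\pi}{3} = \frac{2\pi}{3}$. Since the Fermat point $P$ is by definition the interior point where the three segments meet at equal angles of $\frac{2\pi}{3}$, it satisfies $\angle P_0PP_1 = \frac{2\pi}{3}$ and hence lies on this arc of $\mathcal{C}_{01}$. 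Repeating for the other two edges shows $P$ lies on all three circumcircles $\mathcal{C}_{01}$, $\mathcal{C}_{12}$, $\mathcal{C}_{20}$, so the three circles share the common point $P$ (one verifies they intersect pairwise only at $P$ and at a shared triangle vertex, which is excluded for the interior point).

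The second main step is to identify this common point with the intersection of the three lines. I would show that for each edge the segment $E_{01}P$ is a diameter-free chord arranged so that $P_2$, $P$, and $E_{01}$ are collinear. The cleanest way is again via inscribed angles on $\mathcal{C}_{01}$: one computes that the arc from $P_0$ to $P$ seen from $E_{01}$ equals the arc seen making $E_{01}$, $P$, $P_2$ lie on one line; equivalently, I verify $\angle P_0PE_{01} = \angle P_0P_2\text{-direction}$ using that $\angle P_0PP_2 = \frac{2\pi}{3}$ and the inscribed angle $\angle P_0PE_{01}$ subtends the $\frac{\pi}{3}$ arc $P_0E_{01}$, so the two angles are supplementary, forcing collinearity of $P_2$, $P$, $E_{01}$. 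Carrying this out for all three edges shows each of the three lines of the theorem passes through $P$, establishing their concurrence at the Fermat point.

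The main obstacle I anticipate is the careful bookkeeping of \emph{orientations} and the \emph{opposite-side} conditions: the inscribed angle theorem gives $\frac{2\pi}{3}$ only on the correct arc, and the collinearity argument requires that $P_2$ and the apex $E_{01}$ lie on opposite sides of the chord $P_0P_1$, which is exactly where the hypothesis that all angles $\alpha_i$ are strictly less than $\frac{2\pi}{3}$ is used (this guarantees $P$ is interior and the configuration is non-degenerate). Getting all the supplementary-versus-equal angle relations consistent, rather than off by the wrong arc, is the delicate part; once the orientation conventions are fixed, each individual angle computation is routine. Since the statement is presented as classical with reference to \cite{Cox89}, I would most likely give this synthetic inscribed-angle proof in compact form, or simply cite \cite{Cox89} and move directly to applying the construction in the algorithm.
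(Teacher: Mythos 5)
Your proof is correct, but there is essentially nothing in the paper to compare it against: the paper states Theorem~\ref{thmConstFermat} without proof, citing \cite{Cox89} --- precisely the fallback you mention in your final sentence, and consistent with how the paper also treats Theorem~\ref{thmFermat}. Your argument itself is the standard synthetic one and is sound. The existence of the interior point $P$ with the equal-angle property is supplied by Theorem~\ref{thmFermat}(i) under the hypothesis $\alpha_i<\frac{2\pi}{3}$, so proving concurrence of the three lines at $P$ is indeed all that remains. The converse of the inscribed angle theorem (a point seeing the chord $P_0P_1$ under the angle $\frac{2\pi}{3}$ from the side opposite $E_{01}$ lies on the far arc of $\mathcal{C}_{01}$) correctly places $P$ on each circumcircle, and the angle count $\angle P_2PP_0+\angle P_0PE_{01}=\frac{2\pi}{3}+\frac{\pi}{3}=\pi$ gives collinearity of $P_2$, $P$, $E_{01}$, provided the ray $PP_0$ lies between the rays $PP_2$ and $PE_{01}$; this holds because $PE_{01}$ bisects the angle $\angle P_0PP_1$ (inscribed angles give $\angle P_0PE_{01}=\angle P_1PE_{01}=\frac{\pi}{3}$), so it points into the sector opposite the one containing $PP_2$. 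The orientation issues you flag are real but resolve exactly as you indicate. Two cosmetic remarks: the clause ``$\angle P_0PE_{01}=\angle P_0P_2$-direction'' is garbled and should be deleted in favor of the supplementary-angle computation that follows it; and the parenthetical claim that the three circles intersect pairwise only at $P$ and a shared vertex is unnecessary for your argument, since you show directly that the specific point $P$ lies on all three circles and all three lines, which is all the theorem asserts. In terms of trade-offs, the paper's citation-only treatment keeps a peripheral classical fact out of the way of its main construction, while your version makes that step self-contained at the cost of half a page of elementary geometry.
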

	
Let us assume that we have performed the first two steps in the algorithm. We now have two vectors $\mathbf{u}_1$ and $\mathbf{u}_2$ such that the basis $(\mathbf{u}_1,\mathbf{u}_2)$ is adapted to some tiling $\mathcal{T}$. We choose (arbitrarily) some point $P_0$ in $\RR^2\,$. We can then build the points $P_1$ and $P_2$ of Figure \ref{figTilingFermat}. Using the construction of Theorem \ref{thmConstFermat}, we can find the coordinates of the Fermat point $P$ of the triangle $P_0P_1P_2\,$. The segments $P_0P\,$, $PP_2\,$, and $P_1P$ then define three successive sides of the tiling domain, which is enough to construct the tiling domain itself.  		
	
\subsection{Examples}
We now look for examples of tilings with given matrices $V$, suggested by the numerical simulations in Section \ref{secNumerics}. By applying Lemma \ref{lemCondFermat}, we deduce the following result, which is a more precise version of Theorem \ref{thmExkPart}. 
	
\begin{prop} 
\label{propHexTilings}
For $k\in \{3\,,\,4\,,\,5\}\,$, there exists a tiling of the torus $\Tor(1,b)\,$, with an associated matrix $V_k\,$, if, and only if, $b\in \left(\bH_k\,,\,1\right]\,$, where
\begin{itemize}
	\item $V_3= \begin{pmatrix} 2 & 1\\ -1 & 1\end{pmatrix}$ and $\bH_3=\frac{\sqrt{11}-\sqrt3}{4}\,$;
	\item $V_4=\begin{pmatrix} 1&-1\\ 2&2\end{pmatrix}$ and $\bH_4=\frac1{2\sqrt{3}}\,$;
	\item $ V_5=\begin{pmatrix} 1&-1\\ 2&3\end{pmatrix}$ and $ \bH_5=\frac{\sqrt{291}-5\sqrt{3}}{36}\,$.
\end{itemize}
If it exists, this tiling is a $k$-partition of $\Tor(1,b)\,$, and we denote by $\Hex_k(b)$ the corresponding tiling domain. We have
\[\mathfrak{L}_k(\Tor(1,b))\le \min\left(k^2\pi^2, \lambda_1(\Hex_k(b))\right),\quad \forall b\in(\bH_{k},1] \,.\]
\end{prop}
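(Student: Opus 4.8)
\textbf{Proof plan for Proposition \ref{propHexTilings}.}
The plan is to apply the algorithm developed in the preceding subsections to each of the three prescribed matrices $V_k$, and to read off the range of admissible $b$ from the Fermat-point criterion of Lemma \ref{lemCondFermat}. First I would use Lemma \ref{lemBasis} in reverse: given $V_k=(v_{i,j})$ and the target torus $\Tor(1,b)$, the two vectors of the $\mathcal{T}$-adapted basis are
\[
\mathbf{u}_1=\frac{v_{1,1}}{k}\mathbf{e}_1+\frac{v_{2,1}}{k}b\,\mathbf{e}_2\,,\qquad
\mathbf{u}_2=\frac{v_{1,2}}{k}\mathbf{e}_1+\frac{v_{2,2}}{k}b\,\mathbf{e}_2\,.
\]
Since each $V_k$ has integer entries with $\det V_k=\pm k$, Lemma \ref{lemExistTiling} tells me that a tiling adapted to $(\mathbf{u}_1,\mathbf{u}_2)$ exists precisely when the triangle $P_0P_1P_2$ generated by these vectors admits a Fermat point, so the whole problem reduces to checking condition \eqref{eqCondFermat1}--\eqref{eqCondFermat2} as a function of $b$.

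The core computation, which I would carry out separately for $k=3,4,5$, is to express the two scalar invariants
\[
p=\frac{\mathbf{u}_1\cdot\mathbf{u}_2}{\|\mathbf{u}_1\|\,\|\mathbf{u}_2\|}\,,\qquad
r=\frac{\|\mathbf{u}_1\|}{\|\mathbf{u}_2\|}\,,
\]
as functions of $b$ using the formulas above, and then solve the alternative of Lemma \ref{lemCondFermat} for $b\in(0,1]$. For each $k$ this produces a threshold below which the triangle becomes too obtuse (one angle reaching $2\pi/3$) for a Fermat point to exist; that threshold is the claimed $\bH_k$. I expect the relevant boundary case to be the left inequality in \eqref{eqCondFermat2}, namely $r=p-\sqrt{(1-p^2)/3}$, which upon substituting the expressions for $p$ and $r$ becomes an algebraic equation in $b$ whose relevant root yields the stated radical values $(\sqrt{11}-\sqrt{3})/4$, $1/(2\sqrt{3})$, and $(\sqrt{291}-5\sqrt{3})/36$. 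I would also verify that at $b=1$ the criterion is satisfied (so the interval extends up to and including $b=1$) and that the inequality holds throughout $(\bH_k,1]$, giving the "if and only if" claim.

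Once existence is settled, the identification of the image under $\Pi_{\infty}$ as a genuine $k$-partition of $\Tor(1,b)$ is immediate from the correspondence established in Section \ref{subsubsecTilingPlane}: properties i--iii hold by construction (invariance under $\mathbf{e}_1$ and $b\mathbf{e}_2$ from the lattice $\ZZ\mathbf{u}_1+\ZZ\mathbf{u}_2$, area $b/k$ from $\det V_k=\pm k$, and all interior angles $2\pi/3$ from the Fermat-point construction), so $\Pi_{\infty}(\mathcal{T})$ is a strong regular $k$-partition into isometric hexagons $\Hex_k(b)$. The final energy bound then follows directly: $\mathfrak{L}_k(\Tor(1,b))\le\Lambda_k(\Pi_{\infty}(\mathcal{T}))=\lambda_1(\Hex_k(b))$ since all $k$ domains are isometric, while $\mathfrak{L}_k(\Tor(1,b))\le k^2\pi^2$ always holds via $\mathcal{D}_k(1,b)$, and taking the minimum gives the stated inequality.

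\textbf{Main obstacle.} The genuinely delicate part is the algebraic reduction of the Fermat condition to a clean threshold in $b$. The quantities $p$ and $r$ are rational functions of $b$ (through the ratios of the squared norms $\|\mathbf{u}_i\|^2$ and the dot product), so condition \eqref{eqCondFermat2} mixes a rational inequality with a square-root term $\sqrt{(1-p^2)/3}$; isolating which of the two inequalities in \eqref{eqCondFermat2} is active and confirming that the boundary equation has exactly the claimed root in $(0,1)$ requires careful bookkeeping for each $k$. A secondary subtlety is checking that the candidate Fermat point genuinely lies in the \emph{interior} of the triangle (not merely that the angle condition is formally satisfied), so that the three segments $P_0P$, $P_1P$, $P_2P$ assemble into a nondegenerate hexagon rather than a degenerate figure; this amounts to confirming that we are in case i.\ of Theorem \ref{thmFermat} throughout the open interval $(\bH_k,1]$.
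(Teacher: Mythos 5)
Your overall plan coincides with the paper's proof: the paper obtains Proposition \ref{propHexTilings} precisely by running the algorithm of the preceding subsections on the three matrices $V_k$ --- reading Lemma \ref{lemBasis} backwards to get $(\mathbf{u}_1,\mathbf{u}_2)$ from $V_k$ and $b$, invoking Lemma \ref{lemExistTiling} together with Theorem \ref{thmFermat}, and solving the criterion of Lemma \ref{lemCondFermat} in $b$ --- and the passage from the planar tiling to a $k$-partition of $\Tor(1,b)$ and to the energy bound is the same immediate argument you give.

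There is, however, one concrete error in the execution you sketch: the thresholds $\bH_k$ do \emph{not} arise from the left inequality in \eqref{eqCondFermat2}. In fact the equation $r=p-\sqrt{(1-p^2)/3}$ has no solution at all for $k\in\{3,4\}$, because with the bases coming from $V_3$ and $V_4$ one has $r\ge 1$, while $p-\sqrt{(1-p^2)/3}<1$ whenever $p<1$. What actually happens is the following. For $k=4$: $\mathbf{u}_1=(\tfrac14,\tfrac b2)$ and $\mathbf{u}_2=(-\tfrac14,\tfrac b2)$, so $r\equiv1$ and $p=(4b^2-1)/(4b^2+1)$; condition \eqref{eqCondFermat2} is automatic when $p\in(\tfrac12,1)$, and the binding constraint is the endpoint $p=-\tfrac12$ of \eqref{eqCondFermat1}, i.e. $12b^2=1$, giving $\bH_4=1/(2\sqrt3)$. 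For $k=5$ the same thing happens: both inequalities of \eqref{eqCondFermat2} hold automatically for $b\in(0,1]$ (for instance $rp-1=-(3b^2+2)/(1+9b^2)<0$ disposes of the right one), and the threshold is again $p=-\tfrac12$, i.e. $108b^4-61b^2+3=0$, whose relevant root is $\bH_5$. Only for $k=3$ is \eqref{eqCondFermat2} binding, and there it is the \emph{right} inequality: with $p=(2-b^2)/\sqrt{(4+b^2)(1+b^2)}$ and $r=\sqrt{(4+b^2)/(1+b^2)}$ one computes $rp-1=(1-2b^2)/(1+b^2)$ and $r\sqrt{(1-p^2)/3}=\sqrt3\,b/(1+b^2)$, so $r<1/\bigl(p-\sqrt{(1-p^2)/3}\bigr)$ reduces to $2b^2+\sqrt3\,b-1>0$, i.e. $b>(\sqrt{11}-\sqrt3)/4=\bH_3$. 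Since your plan does instruct solving the full alternative of Lemma \ref{lemCondFermat}, carrying it out honestly would surface these corrections; but as written, the specific step that is supposed to produce the three radical values would fail, and the bookkeeping you flag as the main obstacle resolves in a different place than you anticipate. Note also that which inequality of \eqref{eqCondFermat2} can be active depends on the ordering of $\mathbf{u}_1,\mathbf{u}_2$ fixed by the columns of $V_k$, since swapping them replaces $r$ by $1/r$ and exchanges the two inequalities.
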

We use the finite element library {\sc M\'elina} \cite{melina} to give an accurate upper bound of the first eigenvalue $\lambda_{1}(\Hex_{k}(b))$. This upper bound is represented in Figures~\ref{figfigk3.figVP}, \ref{figfigk4.figVP} and \ref{figfigk5.figVP}. We observe that the upper bound by $\lambda_{1}(\Hex_{k}(b))$ is very close to the numerical simulations for $b$ not too close to $\bC_{k}$ (and not too close to $1$ when $k=5$).

\subsection{The case $b=1$}
Let us consider the special case $b=1\,$, in which the tiling domains can be described simply. The hexagonal tilings give a majoration of $\mathfrak{L}_k(\Tor(1,b))\,$, for $k\in \{3\,,\,4\,,\,5\}\,$. Nevertheless, for $k=5\,$, numerical computations show that a tiling of $\Tor(1,1)$ by $5$ squares, represented on Figure \ref{figPart5T11}, has a lower energy.

\begin{figure}
  \begin{center}
  \subfigure[A hexagonal tiling domain for $k=3\,$.\label{figDomTilingk3T11}]{
    \includegraphics[height=3cm]{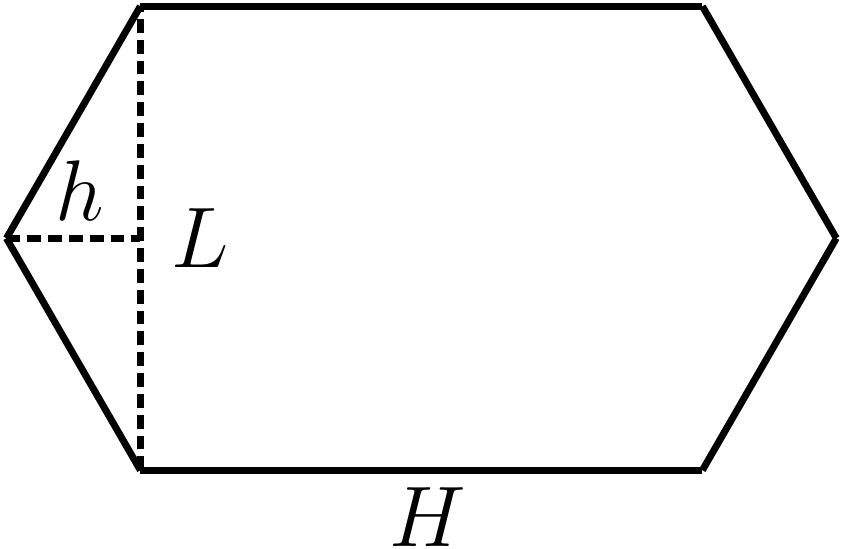}}
    \hfill
    \subfigure[A hexagonal tiling domain for $k=4\,$.\label{figDomTilingk4T11}]{\includegraphics[height=4cm]{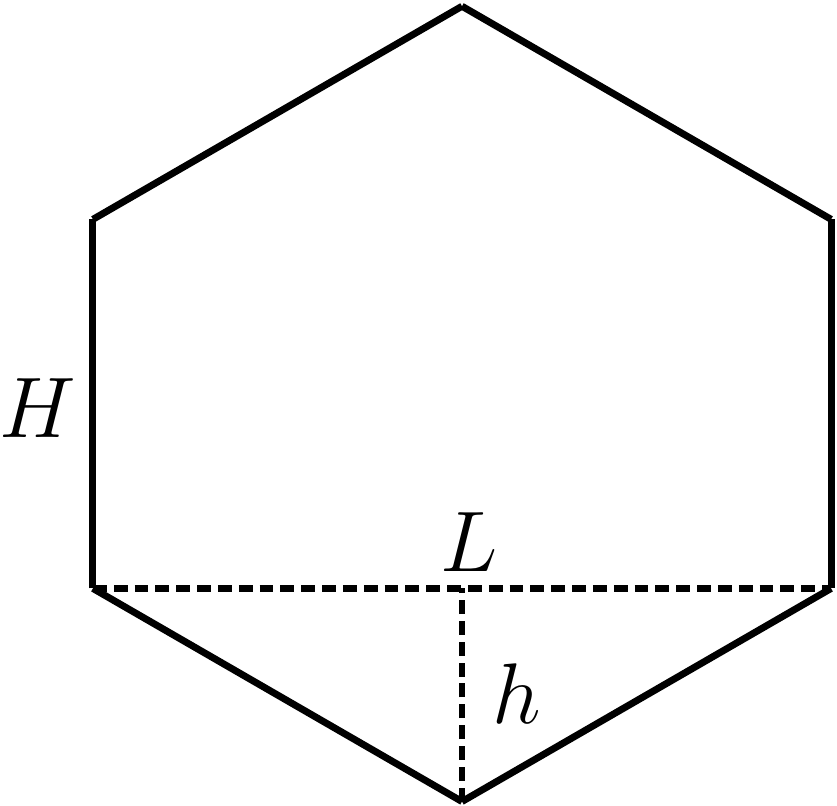}}
    \hfill
    \subfigure[A tiling by five squares.\label{figPart5T11}]{\includegraphics[height=4cm]{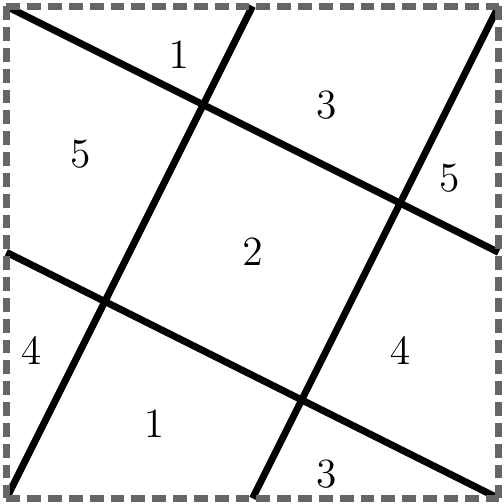}}
    \caption{The case of $\Tor(1,1)\,$.}
\end{center}
\end{figure}

\begin{prop}\label{propLkT11Upper}We have the following upper bound on the minimal energy.
\begin{enumerate}[i.]
	\item $\mathfrak{L}_3(\Tor(1,1))\le\lambda_1(\Hex_3(1))\simeq 62.8389\,$ where $\Hex_3(1)$ is the hexagon shown in Figure \ref{figDomTilingk3T11}, with $L=\frac{\sqrt2}3\,$, $h=\frac1{3\sqrt6}\,$, and $H=\frac1{\sqrt2}-\frac1{3\sqrt6}\,$.
	\item $\mathfrak{L}_4(\Tor(1,1))\le\lambda_1(\Hex_4(1))\simeq 74.9467\,$ where $\Hex_4(1)$ is the hexagon shown in Figure \ref{figDomTilingk4T11}, with  $L=\frac12\,$, $h=\frac1{4\sqrt3}\,$, and $H=\frac{1}{2}-\frac{1}{4\sqrt{3}}\,$. 
	\item $\mathfrak{L}_5(\Tor(1,1))\le \lambda_{1}(\mathsf Q)=10\pi^2\simeq 98.6960\,$ where $\mathsf Q$ is a square of side $\frac1{\sqrt5}\,$, and is the tiling domain of a $5$-partition of $\Tor(1,1)\,$, as seen in Figure \ref{figPart5T11}.
\end{enumerate} 
\end{prop}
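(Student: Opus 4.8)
The plan is to prove each bound by exhibiting an explicit $k$-partition of $\Tor(1,1)$ and computing its energy, since by definition $\mathfrak{L}_k(\Tor(1,1))\le\Lambda_k(\mathcal{D})$ for every $\mathcal{D}\in\mathfrak{P}_k$. In all three cases the partition is a tiling by isometric domains, so it is automatically equispectral and its energy equals the first Dirichlet eigenvalue of a single tile. Parts i and ii are then just the specialization to $b=1$ of the hexagonal tilings already produced in Proposition \ref{propHexTilings}, while part iii requires a separate square tiling.

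For parts i and ii I would first observe that $\bH_k<1$, so that $1\in(\bH_k,1]$ and Proposition \ref{propHexTilings} guarantees the existence of the hexagonal tiling with matrix $V_k$ at $b=1$; this tiling is a $k$-partition with tiling domain $\Hex_k(1)$, whence $\mathfrak{L}_k(\Tor(1,1))\le\lambda_1(\Hex_k(1))$. Setting $b=1$ in Lemma \ref{lemBasis} gives the adapted basis explicitly; for instance $V_3$ yields $\mathbf{u}_1=(2/3,-1/3)$ and $\mathbf{u}_2=(1/3,1/3)$. I would then run the reconstruction algorithm: fix $P_0$, set $P_1=P_0+\mathbf{u}_1$ and $P_2=P_0+\mathbf{u}_2$, locate the Fermat point $P$ of the triangle $P_0P_1P_2$ via Theorem \ref{thmConstFermat}, and read off the three sides $P_0P$, $P_1P$, $P_2P$ of the hexagon. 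At $b=1$ the coordinates collapse to the stated dimensions $L$, $h$, $H$; for $k=3$ one checks as a sanity test that $L=\|\mathbf{u}_2\|=\sqrt2/3$ and $H+h=1/\sqrt2$. Finally, to obtain the numerical values $\lambda_1(\Hex_3(1))\simeq 62.8389$ and $\lambda_1(\Hex_4(1))\simeq 74.9467$, I would compute the first Dirichlet eigenvalue of the explicit polygon with the conforming finite element method of the {\sc M\'elina} library; since a conforming Galerkin discretization minimizes the Rayleigh quotient over a finite-dimensional subspace of $H^1_0(\Hex_k(1))$, the computed value satisfies $\lambda_1^h\ge\lambda_1(\Hex_k(1))$ and is therefore a rigorous upper bound, so the inequality $\mathfrak{L}_k(\Tor(1,1))\le\lambda_1(\Hex_k(1))$ survives the numerics.

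For part iii I would bypass the hexagonal machinery and build the square tiling directly. Take the orthogonal vectors $\mathbf{u}_1=\tfrac15(2,1)$ and $\mathbf{u}_2=\tfrac15(-1,2)$, both of length $1/\sqrt5$, and let $\mathsf{Q}$ be the square they span. The lattice $\ZZ\mathbf{u}_1+\ZZ\mathbf{u}_2$ contains $\mathbf{e}_1=2\mathbf{u}_1-\mathbf{u}_2$ and $\mathbf{e}_2=\mathbf{u}_1+2\mathbf{u}_2$, so it is invariant under the torus lattice $\ZZ\mathbf{e}_1+\ZZ\mathbf{e}_2$; as its covolume is $1/5$, the index is $5$, and the images of $\mathsf{Q}$ under $\Pi_\infty$ form a tiling of $\Tor(1,1)$ by exactly five isometric squares, which is a genuine $5$-partition in the sense of Definition \ref{definPartition}. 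The first Dirichlet eigenvalue of a square of side $s$ being $2\pi^2/s^2$, with $s=1/\sqrt5$ this equals $10\pi^2$ \emph{exactly}, giving $\mathfrak{L}_5(\Tor(1,1))\le 10\pi^2$.

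The hexagonal parts are essentially bookkeeping on top of Proposition \ref{propHexTilings}, so I expect the only genuinely delicate points to be two. First, in part iii, the claim that five squares actually close up into a tiling of the torus rests entirely on the index computation and the invariance under $\mathbf{e}_1,\mathbf{e}_2$ above, which is the one construction not covered by the earlier results. Second, the displayed decimal eigenvalues in parts i and ii are numerical approximations, so the rigorous content of those inequalities reduces to the fact that conforming finite elements yield a provable upper bound for $\lambda_1$; I would emphasize this so that the stated bounds are not merely heuristic even though the digits of $62.8389$ and $74.9467$ are not certified.
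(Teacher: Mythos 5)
Your proposal is correct and follows essentially the same route as the paper: parts i and ii are the specialization of Proposition \ref{propHexTilings} to $b=1$ (with the hexagon dimensions $L$, $h$, $H$ recovered from the Fermat-point construction and the eigenvalues computed by conforming finite elements), and part iii is exactly the five-square tiling of Figure \ref{figPart5T11}, whose tile of side $1/\sqrt{5}$ has first Dirichlet eigenvalue $2\pi^2\cdot 5=10\pi^2$. Your lattice-index argument for the square tiling and your remark that conforming Galerkin eigenvalues bound $\lambda_1$ from above merely make explicit details that the paper leaves implicit.
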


\begin{conj}\label{conjLkT11Upper} The three inequalities in Proposition \ref{propLkT11Upper} are actually equalities.
\end{conj}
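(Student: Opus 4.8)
The statement asserts the matching lower bounds $\mathfrak{L}_k(\Tor(1,1))\ge\lambda_1(\Hex_k(1))$ for $k\in\{3,4\}$ and $\mathfrak{L}_5(\Tor(1,1))\ge 10\pi^2$, the reverse inequalities being already furnished by Proposition \ref{propLkT11Upper}. The crude bound of Theorem \ref{thmNodalMinimal} is useless here, since $\lambda_3=\lambda_4=\lambda_5=4\pi^2$ on $\Tor(1,1)$, and the Faber--Krahn bound $\lambda_1(D_i)\ge\pi j_{0,1}^2\,k$ coming from the area constraint $|D_i|=1/k$ yields only $\approx 54.5,\,72.7,\,90.8$, each below the corresponding target. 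The plan is therefore to combine a topological reduction of the minimizer with the covering/antisymmetry machinery of Section \ref{subsecProofOdd}, which turns out to be exact precisely when $k=5$.

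I would treat $k=5$ first, where the target $10\pi^2$ is a genuine antisymmetric eigenvalue. Consider the four-sheeted covering $\Pi_4:\Tor(2,2)\to\Tor(1,1)$ whose deck group is generated by $\sigma:(x,y)\mapsto(x+1,y)$ and $\sigma':(x,y)\mapsto(x,y+1)$, and let $\mathcal A$ be the space of functions antisymmetric under both $\sigma$ and $\sigma'$, with associated self-adjoint operator and eigenvalues $(\lambda_\ell^{\mathrm{aa}})_{\ell\ge1}$. A Fourier computation shows that such functions are spanned by the modes $e^{i\pi(mx+ny)}$ with $m$ and $n$ both odd, of eigenvalue $\pi^2(m^2+n^2)$; the smallest admissible values of $m^2+n^2$ are $2$, with eigenspace of dimension four, and then $10$, so that $\lambda_5^{\mathrm{aa}}=10\pi^2$. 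Moreover $\sin(\pi(2x+y))\,\sin(\pi(-x+2y))$ lies in $\mathcal A$, is an eigenfunction for $10\pi^2$ (the cross term in $-\Delta$ vanishes because $(2,1)\perp(-1,2)$), and its nodal set is exactly the lift of the tilted five-square tiling $\mathsf Q$. Repeating the min-max argument of Lemma \ref{lemCourantSym} with test functions built in $\mathcal A$ then gives $\lambda_5^{\mathrm{aa}}\le\Lambda_5(\mathcal D)$ for any $5$-partition $\mathcal D$ of $\Tor(1,1)$ whose lift $\Pi_4^{-1}(\mathcal D)$ splits each domain into four distinct components freely permuted by the deck group; with the upper bound, this closes the case.

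Reducing to that situation is the heart of the matter. Using the regularity of Theorem \ref{thmMinPartReg} and the Euler relation $V-E+F=0$ on the torus, a minimal partition with generic degree-three singular points has $V=2k$, $E=3k$ and $F=k$ faces averaging six edges. The hypothesis needed for the $k=5$ bound is that every domain is homeomorphic to a disk, so that its four preimages under $\Pi_4$ are distinct (in contrast with the proof of Lemma \ref{lemLower}, where non-disk domains lift to only two components). I would establish this by excluding non-contractible (wrapping) domains for the minimizer: such a domain carries a non-trivial loop and, via a slicing/Steiner-symmetrization estimate in the spirit of Lemma \ref{lemNoDisk} together with $|D_i|=1/5$, should be forced to have energy strictly above $10\pi^2$, contradicting minimality. \textbf{This topological classification --- ruling out all wrapping and mixed configurations at $b=1$ --- is the step I expect to be the main obstacle}, since at $b=1$ the disk is no longer geometrically forbidden and the simple monotonicity argument underlying Lemma \ref{lemNoDisk} no longer applies.

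For $k\in\{3,4\}$ the same covering trick cannot deliver an exact bound: the values $\lambda_1(\Hex_3(1))\simeq62.8389$ and $\lambda_1(\Hex_4(1))\simeq74.9467$ are only known numerically and coincide with no antisymmetric eigenvalue of a natural covering, and the conjectured minimizers are genuinely non-nodal hexagons. The only route I see here is fully variational: reduce as above to the hexagonal combinatorial type, parametrize the finite-dimensional family of realizations satisfying the $2\pi/3$ angle condition of Section \ref{secTilings}, and prove, using equispectrality (all $\lambda_1(D_i)$ equal) together with a rigidity argument, that $\max_i\lambda_1(D_i)$ is minimized by the symmetric tiling $\Hex_k(1)$. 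The fundamental difficulty is that no available lower-bound technique is sharp enough to reach these values: one would essentially need a quantitative hexagonal Faber--Krahn rigidity statement, which is itself open, and this is why the assertion is stated only as a conjecture.
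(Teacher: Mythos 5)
This statement is Conjecture~\ref{conjLkT11Upper}: the paper offers no proof of it at all, only the numerical evidence of Sections~\ref{secNumerics} and~\ref{secTilings}, so your attempt must stand on its own --- and by your own admission it does not close. The parts you do carry out for $k=5$ are correct and genuinely interesting. On the four-sheeted covering $\Tor(2,2)$ of $\Tor(1,1)$, the Fourier modes antisymmetric under both deck translations are exactly those $e^{i\pi(mx+ny)}$ with $m,n$ odd, so the doubly antisymmetric spectrum starts with $2\pi^2$ (multiplicity $4$) followed by $10\pi^2$, whence $\lambda_5^{\mathrm{aa}}=10\pi^2$; the function $\sin(\pi(2x+y))\sin(\pi(-x+2y))$ is indeed such an eigenfunction (the cross term vanishes because $(2,1)\cdot(-1,2)=0$), and its nodal partition is the lift of the five-square tiling. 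Combined with the disjoint-support min-max argument of Lemma~\ref{lemCourantSym} (each domain that lifts to four distinct components carries a doubly antisymmetric test function built from signed copies of its first eigenfunction), this gives a valid \emph{conditional} lower bound in the spirit of Proposition~\ref{propbkLowerCond}: any $5$-partition of $\Tor(1,1)$, all of whose domains lift to four distinct components, has energy at least $10\pi^2$.

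The genuine gap is precisely the step you flag in bold: showing that a minimal $5$-partition of $\Tor(1,1)$ has no domain carrying a loop that is nontrivial mod $2$ in homology. The route you sketch for it would fail. Lemma~\ref{lemNoDisk} works for small $b$ because there $j(b)$ \emph{exceeds} the reference energy $k^2\pi^2$; at $b=1$ the logic is reversed (you must exclude wrapping domains, not disks), and no symmetrization or Faber--Krahn bound reaches $10\pi^2\simeq 98.7$: even granting area $1/5$, Faber--Krahn gives only $5\pi j_{0,1}^2\simeq 90.8$. Moreover the area hypothesis $|D_i|=1/5$ is itself unjustified, since minimal partitions are equispectral but not necessarily equiareal, so your slicing estimate cannot even be stated before the areas are controlled (e.g.\ by Faber--Krahn in both directions, which pins them only to an interval). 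For $k\in\{3,4\}$ the situation is worse, as you concede: the targets $\lambda_1(\Hex_3(1))$ and $\lambda_1(\Hex_4(1))$ are known only through numerical approximation, so no argument of the type you outline could certify equality --- one would need, at minimum, a reduction to a finite-dimensional family of hexagonal tilings plus rigorous (interval-arithmetic) eigenvalue enclosures, and no such reduction is known. In short, your proposal is an honest research program with a correct conditional core for $k=5$, but it does not prove the statement, which remains a conjecture both in the paper and after your work.
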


\subsection{Comparison with the numerical results}
Let us focus for a moment on the case $k=3$, for $b$ close to $\bC_3=1/\sqrt{2}\,$. Figure \ref{figEnergyTrans3} shows that there exists $b^*>1/\sqrt{2}$
such that, for $b \in \left[\bC_3,b^*\right)\,$, $\lambda_1(\Hex_3(b))>9\pi^2\,$. Therefore, for $b \in \left[\bC_3,b^*\right)\,$, the tiling of $\Tor(1,b)$ by three hexagons with straight edges that we have constructed is not minimal. The numerical method of Section \ref{secNumerics} generates better candidates.
\begin{figure}[!h]
  \begin{center}
  \subfigure[$k=3$ and $b$ close to $\bC_3$.\label{figEnergyTrans3}]{
    \includegraphics[height=5cm]{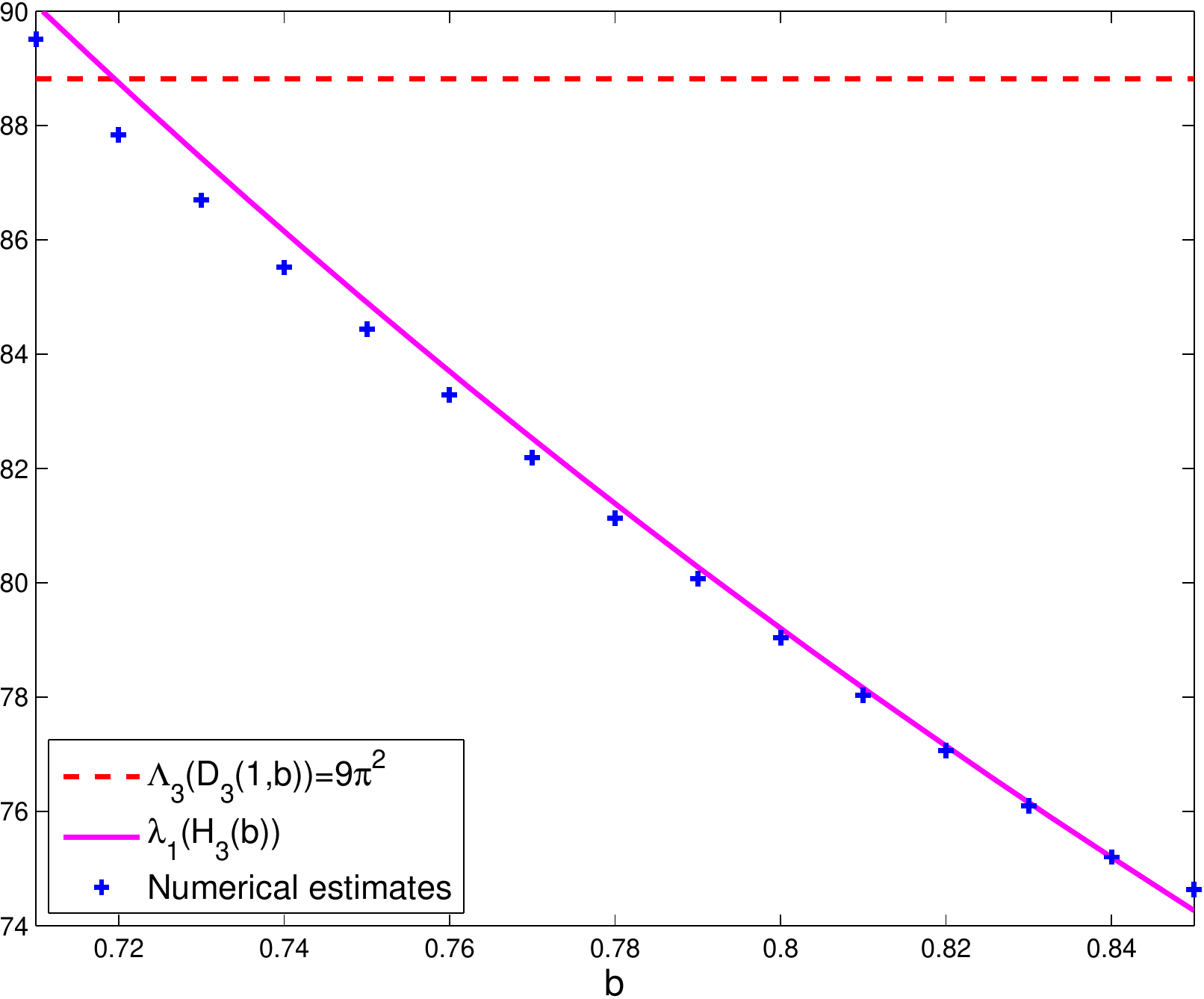}} 
    \hspace{2cm}
    \subfigure[$k=4$ and $b$ close to $b_4$.\label{figEnergyTrans4}]{\includegraphics[height=5cm]{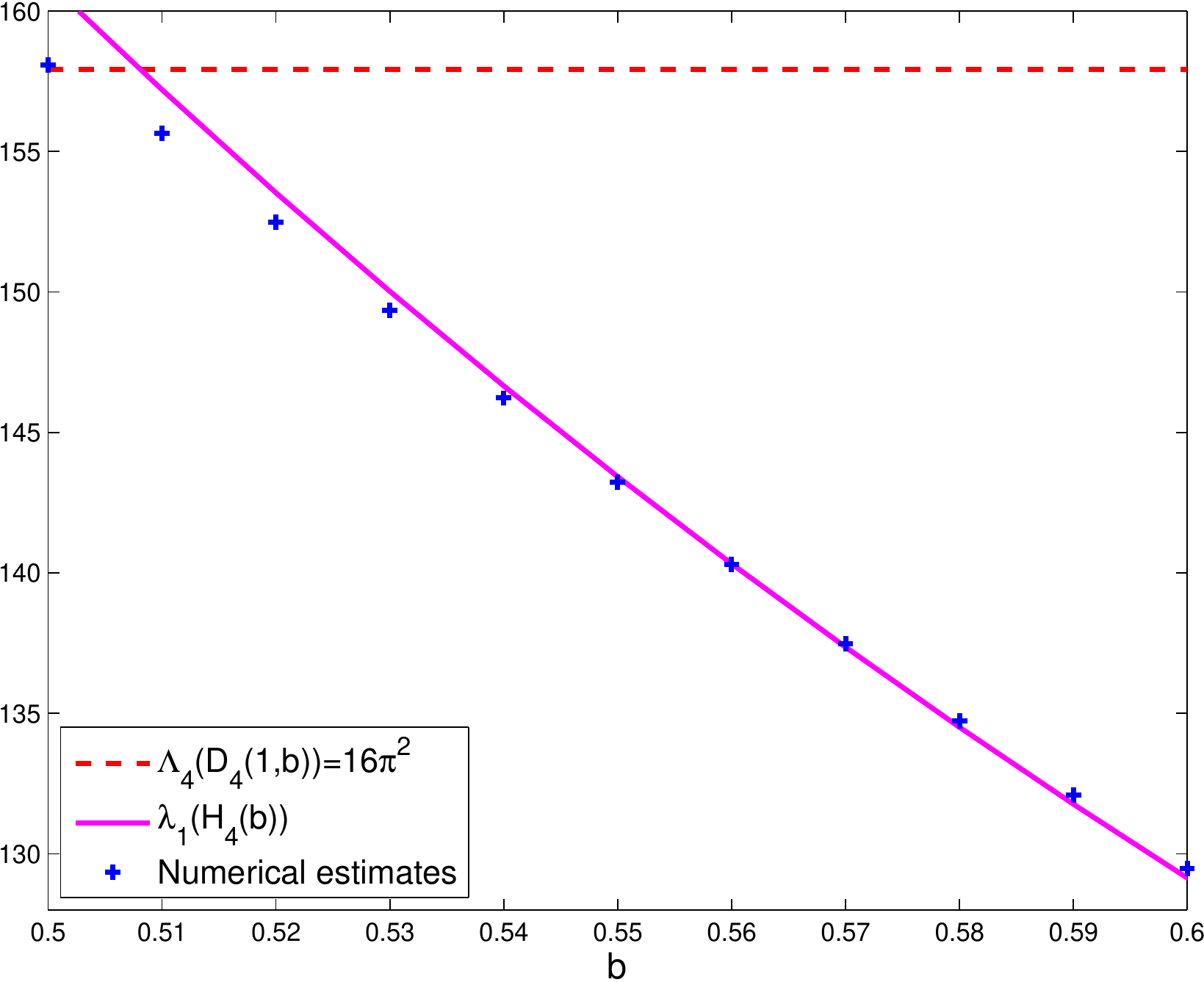}} 
    \\
    \subfigure[$k=5$ and $b$ close to $\bC_5$.\label{figEnergyTrans5}]{\includegraphics[height=5cm]{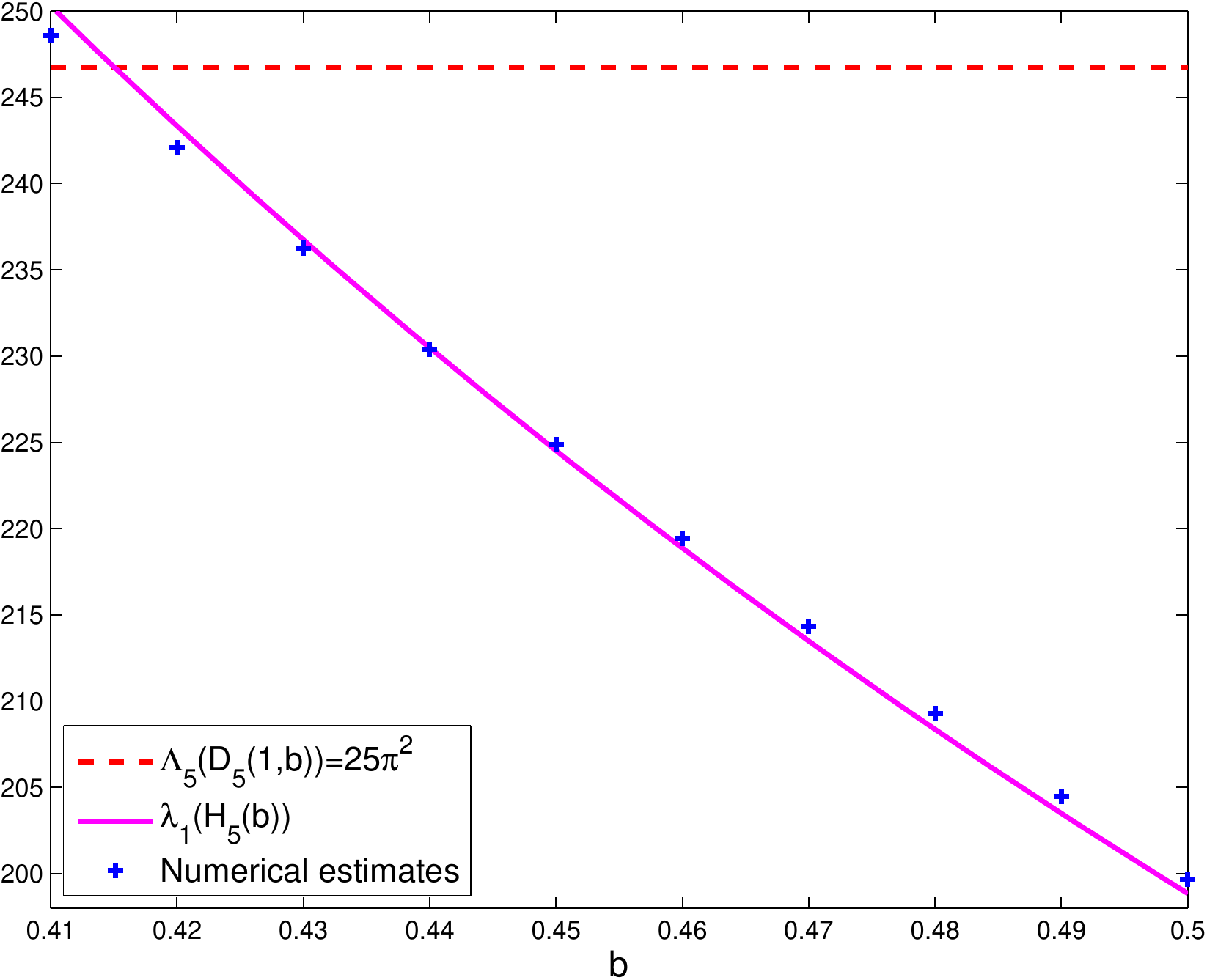}}
    \hspace{2cm}
    \subfigure[$k=5$ and $b$ close to $1$.\label{figEnergyZoom5}]{\includegraphics[height=5cm]{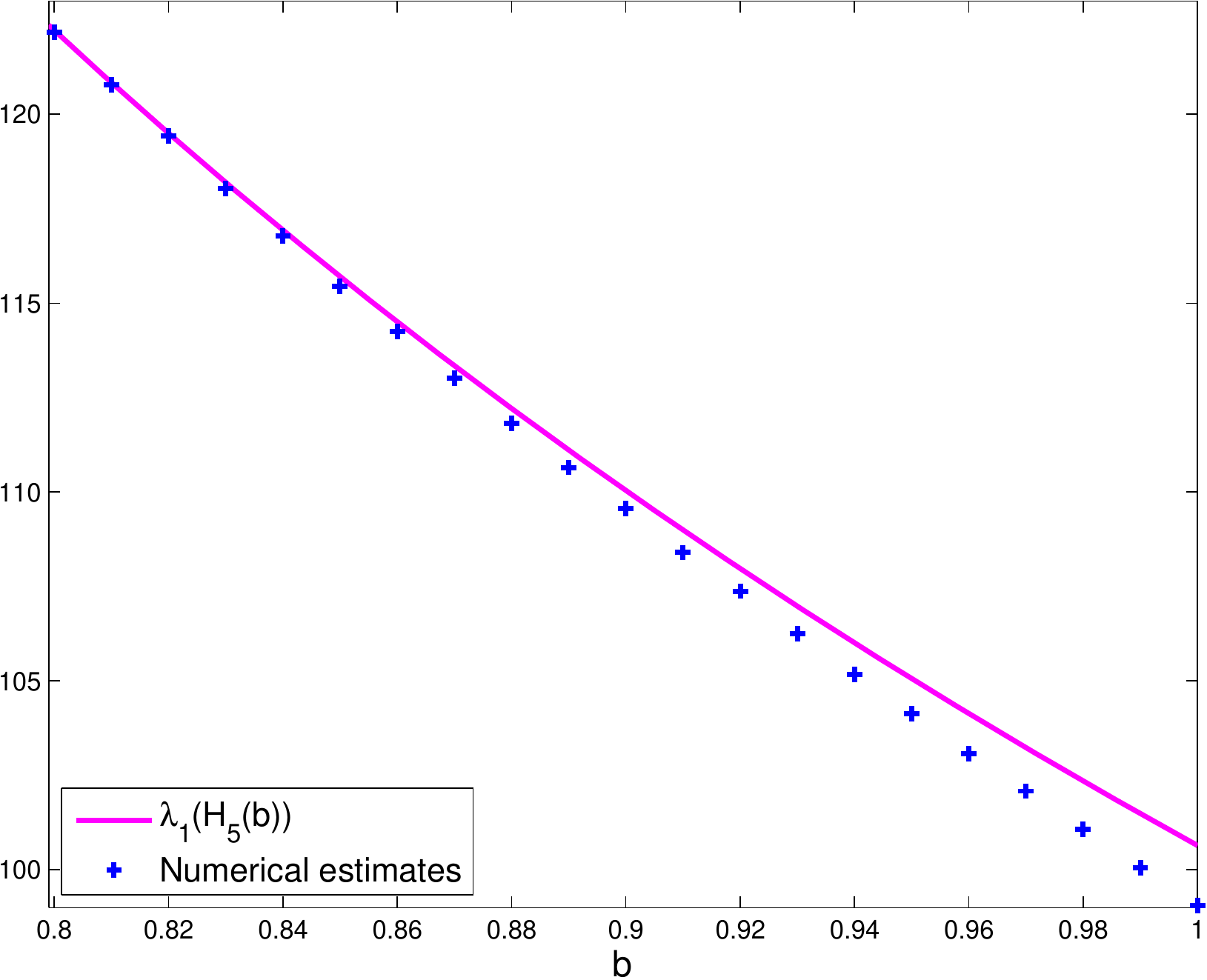}} 
    \caption{Zoom on the energy curves in Section \ref{secNumerics}.\label{figEnergyZoom}}
\end{center}
\end{figure}

This is consistent with the idea that there is some continuity of the minimal partitions with respect to $b\,$, and with the conjecture that the $3$-partition shown on Figure \ref{figNodal6Partb} is minimal. Indeed, if we try to deform this latest partition by splitting each singular point of order four into a pair of singular points of order three, while keeping each domain close the original rectangle, the resulting partition cannot satisfy the equal angle meeting property if all the regular parts of the boundary remain straight lines. This suggests that the boundary of the partition should be curved in the neighborhood of the singular points in such a way that the we keep the equal angle meeting property. This seems to be the case for the partitions represented on Figure \ref{figk3fig2}. This appears more clearly on Figure \ref{figZoom3}, obtained by zooming on the neighborhood of two singular points.  The same phenomenon occurs for $4$-partitions of $\Tor(1,b)$  when $b$ is close to $b_4=1/2\,$, as seen on Figures \ref{figEnergyTrans4} and \ref{figZoom4}, and for $5$-partitions of $\Tor(1,b)$  when $b$ is close to $\bC_5=1/\sqrt{6}$ or to $1\,$, as seen on Figures \ref{figEnergyTrans5}, \ref{figEnergyZoom5}, \ref{figZoom5}, and \ref{figZoom5bis}. As in Figure \ref{figEnergyTrans3}, we see that the numerical method produces partitions with lower energy.
\begin{figure}[h!] \begin{center}
\subfigure[$b=0.72$, $k=3$\label{figZoom3}]{\includegraphics[width=3.5cm]{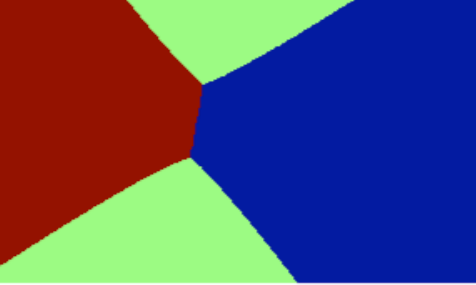} }\hfill
\subfigure[$b=0.51$, $k=4$\label{figZoom4}]{\includegraphics[width=3.5cm]{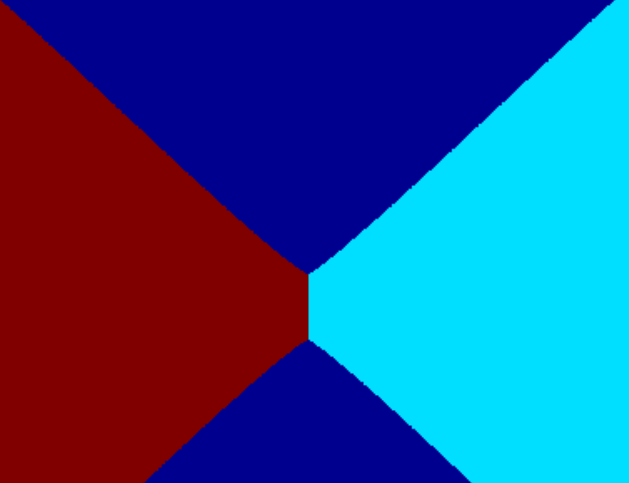} }\hfill
\subfigure[$b=0.42$, $k=5$\label{figZoom5}]{\includegraphics[width=3.5cm]{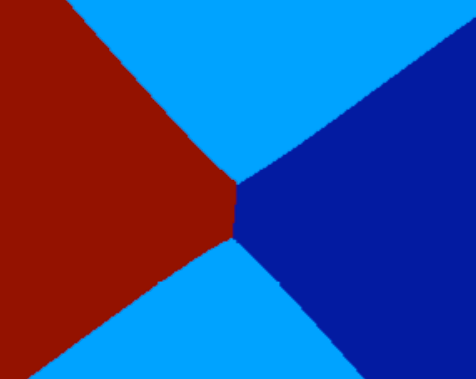} }\hfill
\subfigure[$b=0.99$, $k=5$\label{figZoom5bis}]{\includegraphics[width=3.5cm]{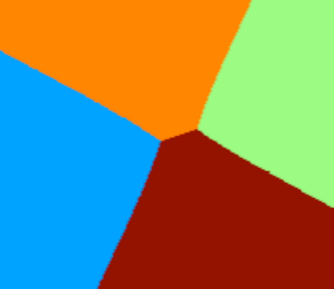} }
\caption{Zoom on the boundary of some of the partition in Section \ref{secNumerics}.
\label{figZoom}}
\end{center}
\end{figure}

For $k \in \{3,\,4,\,5\}$ and $b \in (\bH_k,1]$\,, let us denote by $\mathcal{T}_k(b)$ the hexagonal tiling of $T(1,b)$ given by Proposition \ref{propHexTilings}. According to \cite[Theorem 1.13]{HelHofTer09}, if $\mathcal{T}_k(b)$ is minimal, two neighboring domains of $\mathcal{T}_k(b)$ give a minimal $2$-partition of the doubly hexagonal domain formed by their reunion. More explicitly, if we denote by $2\Hex_k^j(b)\,$, $j\in \{1,\,2,\,3\}\,$, the three doubly hexagonal domains we can extract from $\mathcal{T}_k(b)$\,, we have $\mathfrak{L}_2(2\Hex_k^j(b))=\Lambda_k(\mathcal{T}_k(b))=\lambda_1(\Hex_k(b))$\,. On the other hand, since we are considering $2$-partitions, 
$\mathfrak{L}_2(2\Hex_k^j(b))=\lambda_2(2\Hex_k^j(b))\,$.
Therefore, if for some $b \in (\bH_k,1]$\,, $\mathcal{T}_k(b)$ is minimal, we have
\begin{equation}
\label{eqPCCs}
\lambda_1(\Hex_k(b))=\lambda_2(2\Hex_k^j(b))\quad\mbox{ for } j\in \{1,\,2,\,3\}\,
\end{equation}
(this is a special case of \cite[Proposition 8.3]{HelHofTer09}). Figure \ref{figDiffEV} gives, for $k=3,4,5$, the difference
\[\delta_k^j(b)=\lambda_1(\Hex_k(b))-\lambda_2(2\Hex_k^j(b))\quad \mbox{ for }j \in\{1,\,2,\,3\}\quad\mbox{ and }\quad b\in (\bH_k,1]\,,\]
computed thanks to the finite element library {\sc M\'elina} \cite{melina}.  It shows that in general, Equality \eqref{eqPCCs} is not satisfied, and then the hexagonal tiling $\mathcal{T}_k(b)$ is not minimal. 
Let us note that when $k=4$ and $b=\sqrt3/2$\,, the hexagonal tiling $\mathcal{T}_4(b)$ is regular, and condition \eqref{eqPCCs} is satisfied by symmetry. 
 \begin{figure}[h!] \begin{center}
\subfigure[$k=3$\label{figZoom3}]{\includegraphics[width=5cm]{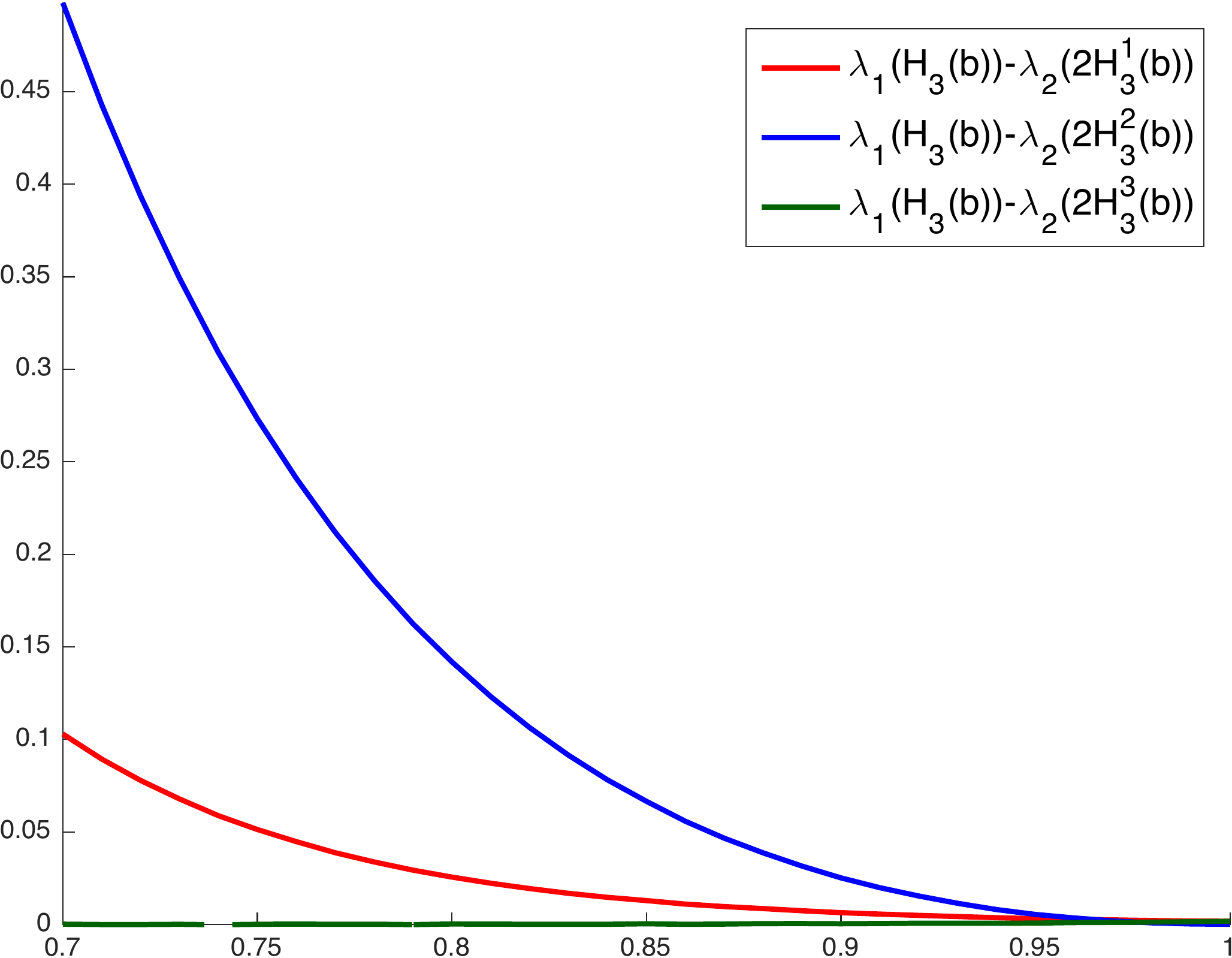} }\hfill
\subfigure[$k=4$\label{figZoom4}]{\includegraphics[width=5cm]{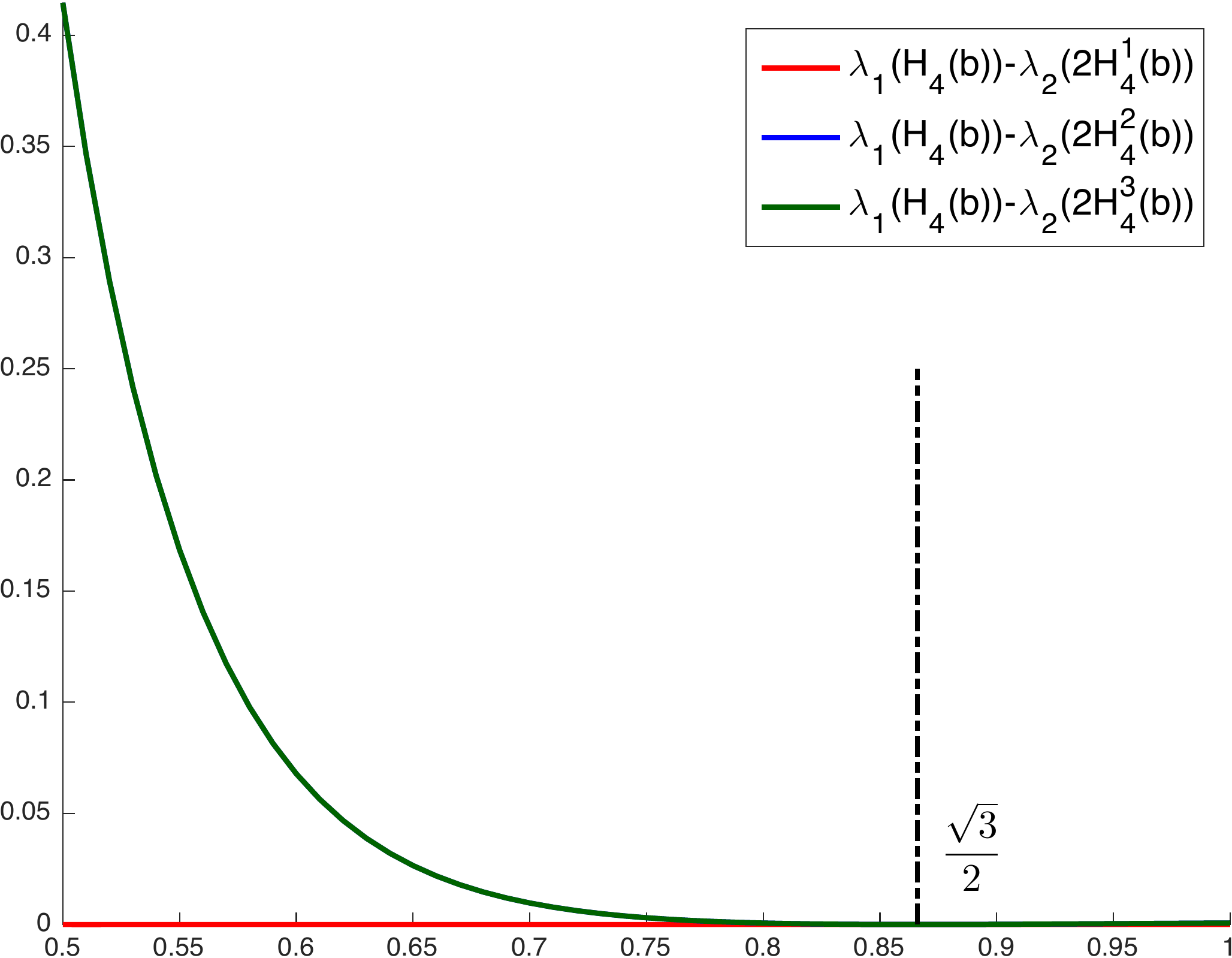} }\hfill
\subfigure[$k=5$\label{figZoom5}]{\includegraphics[width=5cm]{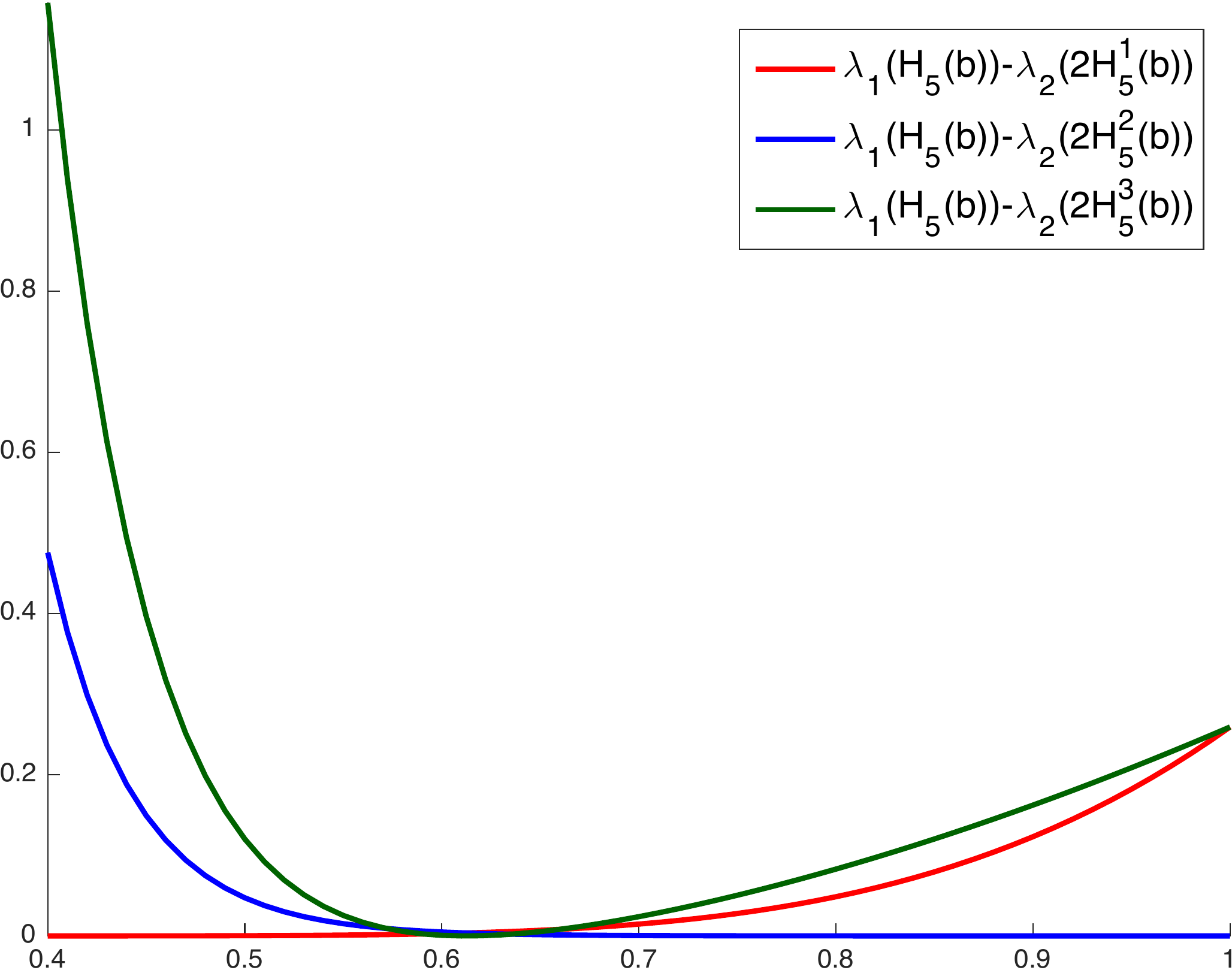} }
\caption{$b\mapsto \delta_k^j(b)$, $j \in\{1,\,2,\,3\}$, $b\in (\bH_k,1]$ for $k=3,4,5$. \label{figDiffEV}}
\end{center}
\end{figure}

\subsection*{Acknowledgments}
 We thank B.~Helffer for his suggestions concerning  Section \ref{secTransitions}, and for his help and interest with this project. We thank  \'E.~Oudet for his help in implementing the optimization algorithm. This work was partially supported by the ANR (Agence Nationale de la Recherche), project OPTIFORM, n$^\circ$ANR-12-BS01-0007-02, and by the ERC, project COMPAT, ERC-2013-ADG  n$^\circ$339958.

{\small
\bibliographystyle{plain} 
\bibliography{BiblioTorus}}

\begin{thebibliography}{10}

\bibitem{Bogosel2015MFS}
B.~Bogosel.
\newblock The method of fundamental solutions applied to boundary eigenvalue
  problems.
\newblock 2016.
\newblock To appear in \emph{J. Comput. Appl. Math.}

\bibitem{BonHelHof09}
V.~Bonnaillie-No{\"e}l, B.~Helffer, and T.~Hoffmann-Ostenhof.
\newblock Aharonov-{B}ohm {H}amiltonians, isospectrality and minimal
  partitions.
\newblock {\em J. Phys. A}, 42(18):185203, 20, 2009.

\bibitem{BonLen14}
V.~Bonnaillie-No{\"e}l and C.~L{\'e}na.
\newblock Spectral minimal partitions of a sector.
\newblock {\em Discrete Contin. Dyn. Syst. Ser. B}, 19(1):27--53, 2014.

\bibitem{BouBucOud09}
B.~Bourdin, D.~Bucur, and {\'E}.~Oudet.
\newblock Optimal partitions for eigenvalues.
\newblock {\em SIAM J. Sci. Comput.}, 31(6):4100--4114, 2009/10.

\bibitem{Buc00}
D.~Bucur.
\newblock Uniform concentration-compactness for {S}obolev spaces on variable
  domains.
\newblock {\em J. Differential Equations}, 162(2):427--450, 2000.

\bibitem{BucBut05}
D.~Bucur and G.~Buttazzo.
\newblock {\em Variational methods in shape optimization problems}.
\newblock Progress in Nonlinear Differential Equations and their Applications,
  65. Birkh\"auser Boston, Inc., Boston, MA, 2005.

\bibitem{BucButHen98}
D.~Bucur, G.~Buttazzo, and A.~Henrot.
\newblock Existence results for some optimal partition problems.
\newblock {\em Adv. Math. Sci. Appl.}, 8(2):571--579, 1998.

\bibitem{CafLin07}
L.~A. Cafferelli and F.~H. Lin.
\newblock An optimal partition problem for eigenvalues.
\newblock {\em J. Sci. Comput.}, 31(1-2):5--18, 2007.

\bibitem{ConTerVer05c}
M.~Conti, S.~Terracini, and G.~Verzini.
\newblock Asymptotic estimates for the spatial segregation of competitive
  systems.
\newblock {\em Adv. Math.}, 195(2):524--560, 2005.

\bibitem{ConTerVer05b}
M.~Conti, S.~Terracini, and G.~Verzini.
\newblock On a class of optimal partition problems related to the {F}u\v c\'\i
  k spectrum and to the monotonicity formulae.
\newblock {\em Calc. Var. Partial Differential Equations}, 22(1):45--72, 2005.

\bibitem{CouHil53}
R.~Courant and D.~Hilbert.
\newblock {\em Methods of mathematical physics. {V}ol. {I}}.
\newblock Interscience Publishers, Inc., New York, N.Y., 1953.

\bibitem{Cox89}
H.~S.~M. Coxeter.
\newblock {\em Introduction to Geometry}.
\newblock Wiley Classics Library. Wiley, 1989.

\bibitem{ElliottRanner2014Surfaces}
C.~M. Elliott and T.~Ranner.
\newblock A computational approach to an optimal partition problem on surfaces.
\newblock {\em Interfaces Free Bound.}, 17(3):353--379, 2015.

\bibitem{HelHof14}
B.~Helffer and T.~Hoffmann-Ostenhof.
\newblock Minimal partitions for anisotropic tori.
\newblock {\em J. Spectr. Theory}, 4(2):221--233, 2014.

\bibitem{HelHofTer09}
B.~Helffer, T.~Hoffmann-Ostenhof, and S.~Terracini.
\newblock Nodal domains and spectral minimal partitions.
\newblock {\em Ann. Inst. H. Poincar\'e Anal. Non Lin\'eaire}, 26(1):101--138,
  2009.

\bibitem{HelHofTer10a}
B.~Helffer, T.~Hoffmann-Ostenhof, and S.~Terracini.
\newblock On spectral minimal partitions: the case of the sphere.
\newblock In {\em Around the research of {V}ladimir {M}az'ya. {III}}, volume~13
  of {\em Int. Math. Ser. (N. Y.)}, pages 153--178. Springer, New York, 2010.

\bibitem{Len15Torus}
C.~L{\'e}na.
\newblock Courant-sharp eigenvalues of a two-dimensional torus.
\newblock {\em C. R. Math. Acad. Sci. Paris}, 35(6):535--539, 2015.

\bibitem{melina}
D.~Martin.
\newblock {\sc M\'elina}, biblioth\`eque de calculs \'el\'ements finis.
\newblock {\em \vskip0pt{\sf\small
  http\char58//anum-maths.univ-rennes1.fr/melina/danielmartin/melina}}, 2007.

\end{thebibliography}
\end{document}